\newcommand{\F}{{\mathbb F}}
\newcommand{\Q}{{\mathbb Q}}
\newcommand{\GL}{{\rm GL}}
\newcommand{\PGL}{{\rm PGL}}
\newcommand{\SL}{{\rm SL}}
\newcommand{\PSL}{{\rm PSL}}
\newtheorem{theorem}{Theorem}[section]
\newtheorem{corollary}{Corollary}[theorem]
\newtheorem{lemma}[theorem]{Lemma}
\newtheorem{proposition}[theorem]{Proposition}
\theoremstyle{definition}
\newtheorem{definition}[theorem]{Definition}
\newtheorem{example}[theorem]{Example}
\newtheorem{remark}{Remark}[theorem]
\numberwithin{equation}{section}
\title{Right Splitting, Galois Correspondence, Galois Representations and Inverse Galois Problem}
\author{Chandrasheel Bhagwat, Shubham Jaiswal}
\address{Indian Institute of Science Education and Research, Dr.\,Homi Bhabha Road, Pashan, Pune 411008,  INDIA.}
\email{cbhagwat@iiserpune.ac.in, \ jaiswal.shubham@students.iiserpune.ac.in}
\subjclass[2020]{11R32, 12F12, 11F80}
\date{\today}
\begin{document}

\begin{abstract}
  In this article, we realize some groups as Galois groups over rational numbers and finite extension of rational numbers by studying right splitting of some exact sequences, Galois correspondence and algebraic operations on Galois representations.
\end{abstract}

\maketitle

\section{Introduction} The famous `Inverse Galois problem' asks whether every finite group appears as the Galois group of some finite Galois extension of $\Q$. Many families of simple groups are known instances of this problem but the general question is still open.\smallskip

Using Galois representations attached to elliptic curves, Arias-de-Reyna and K{\"o}nig have proved in 
\cite{arias2022locally} that there are infinitely many number fields with  Galois group $\GL_2 (\F_p) $ over $\Q$ that are pairwise linearly disjoint over $\Q(\sqrt{p^\ast})$ (see Remark \ref{result_in_arias2022locally}).\smallskip

In \cite{zywina2023modular}, the case of occurrence of $\PSL_2(\F_p)$ as a Galois group over $\Q$ for all primes $p \geq 5$ was established by Zywina using the results of Ribet \cite{ribet} about the Deligne's Galois representations associated to certain newforms.\smallskip

In this article, we establish some instances of Inverse Galois problem. In Sec.~\ref{groups}, through Galois correspondence and right splitting of some exact sequences of groups, we obtain the main results Theorem~\ref{thm: compositum}, Theorem~\ref{semidirectwithGL2}.
We then apply these general results to the case in \cite{arias2022locally}  and obtain interesting consequences as corollaries which are as follows.

\begin{theorem}
Let $p$ be a prime $\geq 5$. 

\begin{enumerate}
    \item The group $(\SL_2(\mathbb{F}_p)\times \SL_2(\mathbb{F}_p))\rtimes \mathbb{Z}/(p-1)\mathbb{Z}$ (with semidirect product group law as in Thm \ref{semidirectwithGL2}) is realizable as Galois group over $\mathbb{Q}$.   

\item  The group $(\PSL_2(\mathbb{F}_p)\times \PSL_2(\mathbb{F}_p))\rtimes \mathbb{Z}/2\mathbb{Z}$ (with semidirect product group law as in Thm \ref{thm: compositum}) is realizable as Galois group over $\mathbb{Q}$.

 \item  For the case when $p\equiv 3 $ $mod$ $4$, let $H$ be the unique index-$2$ (hence normal) subgroup of $\GL_2(\mathbb{F}_p)$. Then  $(H\times H)\rtimes \GL_2(\mathbb{F}_p)/H$ (with semidirect product group law
as in Thm \ref{semidirectwithGL2}) is realizable as Galois group over $\mathbb{Q}$.

\end{enumerate}
    
\end{theorem}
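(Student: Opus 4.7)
The plan is to derive all three parts uniformly by applying Theorem \ref{thm: compositum} and Theorem \ref{semidirectwithGL2} to a pair $L_1, L_2$ of Galois extensions of $\mathbb{Q}$ with $\mathrm{Gal}(L_i/\mathbb{Q}) \cong \GL_2(\mathbb{F}_p)$ supplied by Arias-de-Reyna and K\"onig (Remark \ref{result_in_arias2022locally}). For each part I will identify a normal subgroup $K \triangleleft \GL_2(\mathbb{F}_p)$ for which the short exact sequence $1 \to K \to \GL_2(\mathbb{F}_p) \to \GL_2(\mathbb{F}_p)/K \to 1$ admits a section, locate the fixed field of $K$ in each $L_i$, and then invoke the appropriate theorem to realize $(K \times K) \rtimes (\GL_2(\mathbb{F}_p)/K)$ as the Galois group of a suitable subfield of $L_1 L_2$ over $\mathbb{Q}$.

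For part (1), set $K = \SL_2(\mathbb{F}_p)$: the determinant splits via $a \mapsto \mathrm{diag}(a,1)$, and because the determinant of the mod-$p$ representation of an elliptic curve is the mod-$p$ cyclotomic character, the fixed field of $K$ in each $L_i$ is $\mathbb{Q}(\zeta_p)$; Theorem \ref{semidirectwithGL2} then yields the claimed semidirect product. For part (2), I would first descend to the subfield $L_i^{\mathrm{PGL}} \subset L_i$ cut out by the scalar center, so that $\mathrm{Gal}(L_i^{\mathrm{PGL}}/\mathbb{Q}) \cong \PGL_2(\mathbb{F}_p)$, and then take $K = \PSL_2(\mathbb{F}_p)$ inside $\PGL_2$, whose fixed field is the unique quadratic subfield of $\mathbb{Q}(\zeta_p)$, namely $\mathbb{Q}(\sqrt{p^\ast})$; Theorem \ref{thm: compositum} then applies. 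For part (3), take $K = H$ directly; the quotient $\GL_2(\mathbb{F}_p)/H \cong \mathbb{Z}/2\mathbb{Z}$ is realized by the square-class of the determinant (note $H = \det^{-1}((\mathbb{F}_p^\ast)^2)$ is the unique index-$2$ subgroup since $\SL_2(\mathbb{F}_p)$ is perfect for $p \geq 5$, hence the abelianization of $\GL_2(\mathbb{F}_p)$ is $\mathbb{F}_p^\ast$), and for $p \equiv 3 \pmod 4$ its fixed field in $L_i$ is $\mathbb{Q}(\sqrt{-p}) = \mathbb{Q}(\sqrt{p^\ast})$, matching the base of the linear disjointness provided by \cite{arias2022locally} verbatim.

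The main obstacle will be verifying the required linear disjointness in each case. Parts (2) and (3) should follow immediately from Remark \ref{result_in_arias2022locally}, because the relevant fixed field is exactly $\mathbb{Q}(\sqrt{p^\ast})$. Part (1) is the delicate one: the natural base is $\mathbb{Q}(\zeta_p)$, which both $L_i$ already contain as the fixed field of $\SL_2$, so I need $L_1 \cap L_2 = \mathbb{Q}(\zeta_p)$ rather than the a priori weaker disjointness over $\mathbb{Q}(\sqrt{p^\ast})$. I expect to bridge this either by passing to a subfamily of the Arias-de-Reyna--K\"onig construction with strengthened disjointness, or by observing that since $\mathrm{Gal}(L_i/\mathbb{Q}(\zeta_p)) \cong \SL_2(\mathbb{F}_p)$ is quasi-simple for $p \geq 5$, any nontrivial further intersection would produce a common nontrivial quotient of $\SL_2(\mathbb{F}_p)$, which can be ruled out for generic pairs of elliptic curves. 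Once this disjointness is secured, the semidirect-product structure in all three parts falls out directly from Theorems \ref{thm: compositum} and \ref{semidirectwithGL2}.
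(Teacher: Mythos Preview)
Your treatment of parts (2) and (3) is essentially the paper's approach: for (3) you invoke Theorem~1.1 of \cite{arias2022locally} (Remark~\ref{result_in_arias2022locally}) with $n=2$ and the condition $p\equiv 3 \pmod 4$ ensuring $(2,(p-1)/2)=1$, exactly as in the paper; for (2) you descend to the $\PGL_2$-subfields, whereas the paper cites directly that \cite[Theorem~1.1]{arias2022locally} also holds for $\PGL_2(\mathbb{F}_p)$, but your descent works since $L_1^{\mathrm{PGL}}\cap L_2^{\mathrm{PGL}}\subset L_1\cap L_2=\mathbb{Q}(\sqrt{p^\ast})$ and both $\PGL_2$-fields have $\mathbb{Q}(\sqrt{p^\ast})$ as their unique quadratic subfield. (One small omission: you should still exhibit an order-$2$ coset representative in $\PGL_2(\mathbb{F}_p)\setminus\PSL_2(\mathbb{F}_p)$ to verify the splitting hypothesis of Theorem~\ref{thm: compositum}; the paper writes one down explicitly.)

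Part (1), however, has a genuine gap, and in fact an internal contradiction. You take the pair $L_1,L_2$ from Remark~\ref{result_in_arias2022locally}, which are pairwise linearly disjoint over $\mathbb{Q}(\sqrt{p^\ast})$, and simultaneously assert that each $L_i$ contains $\mathbb{Q}(\zeta_p)$ as the fixed field of $\SL_2$. These two statements are incompatible for $p\geq 5$: if $\mathbb{Q}(\zeta_p)\subset L_i$ for both $i$, then $\mathbb{Q}(\zeta_p)\subset L_1\cap L_2$, so $L_1\cap L_2\supsetneq \mathbb{Q}(\sqrt{p^\ast})$ and linear disjointness over $\mathbb{Q}(\sqrt{p^\ast})$ fails. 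Consequently the extensions in Theorem~1.1 of \cite{arias2022locally} cannot all arise as $p$-torsion fields of elliptic curves, and your quasi-simplicity workaround, which presupposes $\mathrm{Gal}(L_i/\mathbb{Q}(\zeta_p))\cong\SL_2(\mathbb{F}_p)$, never gets off the ground.

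The paper resolves this not by any such argument but by citing a \emph{different} result from the same source: \cite[Corollary~4.3 and Remark~4.4]{arias2022locally} furnish two $\GL_2(\mathbb{F}_p)$-extensions that are linearly disjoint over $\mathbb{Q}(\zeta_p)$ itself. With that input, Theorem~\ref{semidirectwithGL2} applies immediately with $n=p-1$ and $K=\mathbb{Q}(\zeta_p)$, and Corollary~\ref{cor: sl2} identifies the index-$(p-1)$ subgroup as $\SL_2(\mathbb{F}_p)$. Your first proposed ``bridge'' (passing to a subfamily with strengthened disjointness) is thus exactly the right move, but it is a citation, not a deduction from Remark~\ref{result_in_arias2022locally}.
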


\smallskip

In Sec.~\ref{Gal-rep}, using the algebraic operation induction on Galois representations and right splitting of some exact sequences of groups, we obtain Thm.~\ref{thm:induced}, Thm.~\ref{thm: M} as the main results. We then apply these general results to the case in \cite{zywina2023modular} and obtain following interesting consequences in Thm.~\ref{thm : zywina case} and Thm.~\ref{genral q}.

\begin{theorem} 
\hfill
\begin{enumerate}
    \item  For a prime $p\geq 5$, $\PSL_2(\mathbb{F}_p)\rtimes \mathbb{Z}/2\mathbb{Z}$ is realizable as Galois Group over $\mathbb{Q}$ (for semidirect product group law in Thm \ref{thm:induced}). The group obtained here is not isomorphic to $\PGL_2(\mathbb{F}_p)$.

      \item The group $\PSL_2(\mathbb{F}_q)\rtimes \mathbb{Z}/2\mathbb{Z}$ is realisable as Galois Group over $\mathbb{Q}$ (for semidirect product group law in Theorem \ref{thm:induced}) for (1) $q=p$ for $p\geq 5$, (2) $q=p^2$ for $p\equiv \pm 2 \ mod \ 5,\ p\geq 7$, (3) $q=p^2$ for $p\equiv \pm 3 \ mod \ 8,\ p\geq 5$, (4) $q=p^3$ for odd prime $p\equiv \pm 2, \pm 3, \pm 4, \pm 6 \ mod \ 13$ and (5) $q=5^3,3^5,3^4$.

     \end{enumerate}
\end{theorem}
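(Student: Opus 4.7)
The strategy is to combine Zywina's realizations of $\PSL_2(\F_q)$ as a Galois group over $\Q$ with Theorem~\ref{thm:induced}, which (as signalled by the introduction) takes a Galois extension realizing $G$ over $\Q$ and a quadratic extension $F/\Q$, and, provided the natural short exact sequence
\[
1\longrightarrow \mathrm{Gal}(L/F)\longrightarrow \mathrm{Gal}(L/\Q)\longrightarrow \mathrm{Gal}(F/\Q)\longrightarrow 1
\]
(with $L$ the compositum) right splits, identifies $\mathrm{Gal}(L/\Q)$ as $G\rtimes\Z/2\Z$ with a prescribed semidirect-product group law.

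\medskip

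For part (1), let $K/\Q$ be the Galois extension produced in \cite{zywina2023modular} with $\mathrm{Gal}(K/\Q)\cong\PSL_2(\F_p)$, and pick any quadratic field $F=\Q(\sqrt{d})$. Simplicity of $\PSL_2(\F_p)$ for $p\geq 5$ forces $K\cap F=\Q$, so $K$ and $F$ are linearly disjoint and the compositum $L=KF$ is Galois over $\Q$ with group $\PSL_2(\F_p)\times\Z/2\Z$. The exact sequence above right splits through $\mathrm{Gal}(L/K)\cong\Z/2\Z$, so Theorem~\ref{thm:induced} applies and exhibits $\mathrm{Gal}(L/\Q)$ as the required $\PSL_2(\F_p)\rtimes\Z/2\Z$. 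The nonisomorphism with $\PGL_2(\F_p)$ is a one-line center comparison: the copy of $\mathrm{Gal}(L/K)\cong\Z/2\Z$ is central in our construction (the linearly disjoint factors commute), whereas $Z(\PGL_2(\F_p))=1$.

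\medskip

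For part (2), the same compositum argument runs verbatim once a Galois extension of $\Q$ with group $\PSL_2(\F_q)$ is available. For $q=p$ with $p\geq 5$ this is \cite{zywina2023modular}; for the remaining $q\in\{p^2,\,p^3,\,5^3,\,3^5,\,3^4\}$ under the prescribed congruence conditions on $p$, the needed base realization is exactly what Theorem~\ref{thm : zywina case} and Theorem~\ref{genral q} deliver, constructed from Deligne's Galois representations attached to specific newforms of prescribed level and character, with Ribet-type arguments forcing the image to be $\PSL_2(\F_q)$. Since $\PSL_2(\F_q)$ is simple for $q\geq 4$, linear disjointness from any chosen quadratic extension is automatic, and the right-splitting argument of part (1) transfers word for word to yield $\PSL_2(\F_q)\rtimes\Z/2\Z$ as a Galois group over $\Q$.

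\medskip

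The substantive difficulty does not lie in the Galois-theoretic compositum step above but in the input: one must check that the newform constructions underlying Theorem~\ref{thm : zywina case} and Theorem~\ref{genral q} really do realize $\PSL_2(\F_q)$ as a Galois group over $\Q$ itself (rather than over some proper subextension) for each of the specific $q$ in part (2); this is a case-by-case verification of image computations for residual representations. Once those inputs are granted, the present theorem is a routine application of Theorem~\ref{thm:induced} together with the simplicity of $\PSL_2(\F_q)$ for $q\geq 4$ and the center comparison separating the resulting group from $\PGL_2(\F_p)$.
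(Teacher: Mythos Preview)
Your proposal misidentifies Theorem~\ref{thm:induced}: it is not a field-compositum statement but a result about induced Galois \emph{representations}. With $G=\mathrm{Gal}(\bar{\Q}/\Q)$, $H=\mathrm{Gal}(\bar{\Q}/K)$ for $K/\Q$ finite Galois, and $\pi$ a Galois representation, one forms $\rho=\mathrm{Ind}_H^G(\pi|_H)$; the exact sequence in Theorem~\ref{thm:induced} is $1\to\tilde{\rho}(H)\to\tilde{\rho}(G)\to G/H\to 1$, and the group being realized over $\Q$ is the projective image $\tilde{\rho}(G)$. The paper's proof of part~(1) takes $\pi$ to be Deligne's representation attached to a newform as in~\cite{zywina2023modular}, so that $\tilde{\pi}(G)=\PSL_2(\F_p)$; chooses $K=\Q(i)$ with $s$ equal to complex conjugation (so $s^2=1$); uses simplicity of $\PSL_2(\F_p)$ (Lemma~\ref{lemma}) to force $\tilde{\pi}(H)=\tilde{\pi}(G)$; and then applies Theorem~\ref{thm: M}. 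The non-isomorphism with $\PGL_2(\F_p)$ is argued by showing that conjugation by $\tilde{\rho}(s)$ on $\tilde{\rho}(H)$ is an \emph{inner} automorphism (because $\tilde{\pi}(s)\in\tilde{\pi}(H)$), whereas in any splitting $\PGL_2(\F_p)\cong\PSL_2(\F_p)\rtimes\Z/2\Z$ the $\Z/2\Z$-action is non-inner (Proposition~\ref{PGL-PSL}). Part~(2) is handled identically, with the input representations taken from~\cite{zywina2023modular}, \cite{dieulefait1998galois}, \cite{dieulefait2022seven}.

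Your compositum construction is sound mathematics and does realize $\PSL_2(\F_q)\times\Z/2\Z$ over $\Q$; indeed the paper proves exactly that statement as a separate corollary at the end of Section~\ref{Gal-rep}, via the direct-sum mechanism and Theorem~\ref{thm : L}. But it does not establish the theorem as phrased, which pins down the specific semidirect product coming from the induction construction of Theorem~\ref{thm:induced}. (The two abstract groups happen to coincide because the induced $\Z/2\Z$-action is inner, but that is precisely one of the points the paper works to establish; your centre argument is the endpoint, not a substitute for it.) Note also a small circularity in your treatment of part~(2): you invoke Theorems~\ref{thm : zywina case} and~\ref{genral q} as inputs, but these \emph{are} the statement under proof; the actual inputs are the Galois representations from~\cite{zywina2023modular}, \cite{dieulefait1998galois}, \cite{dieulefait2022seven} with projective image $\PSL_2(\F_q)$.
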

\smallskip

Then for the algebraic operations direct sum and tensor products on Galois representations we obtain an unanticipated and interesting result Prop \ref{tensor direct sum}.\smallskip

Then by using the algebraic operations direct sum and tensor products on Galois representations and right splitting of some exact sequences of groups, we obtain Thm.~\ref{thm : L} as the main result. We then apply this general result to the case in \cite{zywina2023modular} and obtain that $\PSL_2(\mathbb{F}_p)\times \mathbb{Z}/2\mathbb{Z}$ is realizable as Galois Group over $\mathbb{Q}$ for $p\geq 5$.\smallskip

Right Splitting of exact sequences of groups is the common thread that runs through the whole article.

\medskip

\section{Groups, Galois Correspondence, Right Splitting and Inverse Galois Problem} \label{groups}

In this section, we describe some group theoretic results and then through these results as well as Galois correspondence and right splitting of some exact sequences, we obtain some general Galois theoretic results. Then we apply these general results to the cases described in \cite{arias2022locally} and obtain interesting consequences. Although some of the results related to groups and Galois correspondence may be known to experts, but we include them in our discussion for the sake of completeness.\smallskip

\subsection{Right splitting and group theoretic results}

\hfill

\medskip

Recall that if $H$ is a normal subgroup of $G$
such that $G/H$ is abelian then $[G, G] \subset H$. Following is a simple lemma where the converse follows from structure theorem for abelian groups.

\begin{lemma}
     Let $G$ be a finite group. Then the condition $G/[G,G]$ is cyclic of order $m$ is equivalent to the condition that for any $n|m$, $G$ has a unique index-$n$ normal subgroup such that the quotient is abelian (the quotient with that subgroup is in fact cyclic). This unique subgroup is given by $$H=\{x\in G \ |\ \pi(x) \text{ is an}\ n\text{-th power in}\ G/[G,G]\}$$ where $\pi$ is the quotient homomorphism $G\rightarrow  G/[G,G]$.
\end{lemma}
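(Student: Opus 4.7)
The plan is to deduce the lemma from the correspondence between normal subgroups of $G$ with abelian quotient and arbitrary subgroups of the abelianization $A := G/[G,G]$. First I would set up the correspondence: since $G/N$ is abelian iff $[G,G] \subseteq N$, the normal subgroups $N$ of $G$ with abelian quotient are in inclusion- and index-preserving bijection with the subgroups of $A$, via $N \leftrightarrow \pi(N)$, where $\pi : G \to A$ is the canonical surjection. In particular, index-$n$ normal subgroups of $G$ with abelian quotient correspond exactly to index-$n$ subgroups of $A$, so the entire lemma reduces to the following statement about the finite abelian group $A$: $A$ is cyclic of order $m$ iff for every $n \mid m$, $A$ has a unique subgroup of index $n$, and that subgroup is $A^n := \{a^n : a \in A\}$.

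For the forward direction, if $A$ is cyclic of order $m$ with generator $g$, then for each $n \mid m$, the unique subgroup of $A$ of index $n$ is $\langle g^n \rangle$, which coincides with $A^n$, and $A/A^n$ is cyclic of order $n$. Pulling back by $\pi$ yields the stated formula $H = \pi^{-1}(A^n) = \{x \in G : \pi(x) \text{ is an } n\text{-th power in } A\}$, and $G/H \cong A/A^n$ is cyclic, explaining the parenthetical remark in the statement.

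For the converse, the structure theorem gives $A \cong \bigoplus_p A_p$. If any Sylow subgroup $A_p$ had $\F_p$-rank $k \geq 2$, then $A$ would possess $(p^k-1)/(p-1) \geq p+1$ distinct subgroups of index $p$, violating uniqueness at $n = p$; hence each $A_p$ is cyclic and so is $A$. Taking $n = m$ in the hypothesis produces an index-$m$ normal subgroup with abelian quotient, which fixes $|A| = m$ under the natural reading in which $m$ is the order of the abelianization. Overall, no step is a serious obstacle: the main content is the correspondence theorem together with the elementary subgroup structure of finite cyclic and finite abelian groups, the only subtle bookkeeping being the identification in the converse of $m$ with $|A|$.
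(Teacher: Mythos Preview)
Your proposal is correct and follows essentially the same approach as the paper, which in fact does not give a detailed proof at all: it only recalls that $G/H$ abelian forces $[G,G]\subset H$ and remarks that ``the converse follows from the structure theorem for abelian groups.'' Your argument is precisely a fleshed-out version of this sketch---reducing via the correspondence $N\leftrightarrow \pi(N)$ to subgroups of the abelianization, handling the forward direction by the subgroup structure of cyclic groups, and the converse by observing that a non-cyclic Sylow part forces multiple index-$p$ subgroups---and you are right to flag that the identification $m=|A|$ is the intended reading, since otherwise the converse is not literally true (e.g.\ $A$ cyclic of order $2m$ would still satisfy the uniqueness hypothesis for all $n\mid m$).
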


   
\smallskip

Let $\mathbb{F}_q$ be finite field of $q$ elements where $q=p^r$ for an odd prime $p$ and $r\in \mathbb{N}$. We know that $[\GL_2(\mathbb{F}_q),\GL_2(\mathbb{F}_q)]=SL_2(\mathbb{F}_q)$ and $\GL_2(\mathbb{F}_q)/SL_2(\mathbb{F}_q)\cong \mathbb{F}_q^{\times}$ through the surjective determinant map and $\mathbb{F}_q^{\times}$ is cyclic. By applying similar argument to $det$ map as applied to $\pi$ map in above lemma, we get the following. 

\begin{corollary}
    \label{cor: unique}
     Let $n\in \mathbb{N}$. If $q\equiv 1$ $mod$ $n$, then $\GL_2(\mathbb{F}_q)$ has a unique index-$n$ normal subgroup such that the quotient is abelian. (The quotient with that subgroup is in fact cyclic). This unique subgroup is given by $$H=\{x\in \GL_2(\mathbb{F}_q)\ | \ det(x)\ \text{ is an}\ n\text{-th  power  in } \ \mathbb{F}_q^{\times}\}.$$
\end{corollary}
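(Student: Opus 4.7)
The plan is to deduce this directly from the preceding lemma applied to $G = \GL_2(\mathbb{F}_q)$, so the bulk of the work is just packaging the facts already recorded in the paragraph between the lemma and the corollary. Since $[\GL_2(\mathbb{F}_q),\GL_2(\mathbb{F}_q)] = \SL_2(\mathbb{F}_q)$ and the determinant induces an isomorphism $\GL_2(\mathbb{F}_q)/\SL_2(\mathbb{F}_q) \cong \mathbb{F}_q^\times$, the abelianization is cyclic of order $m = q-1$. The hypothesis $q \equiv 1 \pmod n$ is literally the divisibility $n \mid m$, so the preceding lemma applies verbatim.

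The first step is therefore to invoke the lemma with this $G$ and $m$, yielding a unique index-$n$ normal subgroup $H \trianglelefteq \GL_2(\mathbb{F}_q)$ whose quotient is abelian, and further that this quotient is cyclic. The second step is to identify the explicit description. The lemma provides
\[
H = \{x \in \GL_2(\mathbb{F}_q) : \pi(x) \text{ is an } n\text{-th power in } G/[G,G]\},
\]
where $\pi$ is the abelianization map. Since the composite
\[
\GL_2(\mathbb{F}_q) \xrightarrow{\pi} \GL_2(\mathbb{F}_q)/\SL_2(\mathbb{F}_q) \xrightarrow{\det} \mathbb{F}_q^\times
\]
is precisely $\det$ and the second arrow is an isomorphism of cyclic groups, the property ``$\pi(x)$ is an $n$-th power in $G/[G,G]$'' transports to ``$\det(x)$ is an $n$-th power in $\mathbb{F}_q^\times$''. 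This yields the stated formula for $H$.

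There is essentially no obstacle: the only thing to be careful about is ensuring that the isomorphism $G/[G,G] \xrightarrow{\sim} \mathbb{F}_q^\times$ sends $n$-th powers to $n$-th powers, which is automatic for any group isomorphism. The write-up is a one-line application of the lemma followed by the translation under $\det$.
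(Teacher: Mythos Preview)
Your proposal is correct and follows essentially the same approach as the paper: both apply the preceding lemma to $G=\GL_2(\mathbb{F}_q)$ using that $[G,G]=\SL_2(\mathbb{F}_q)$ and $G/[G,G]\cong\mathbb{F}_q^\times$ is cyclic of order $q-1$, and then translate the description of $H$ by replacing the abelianization map $\pi$ with $\det$. The paper is terser, merely saying ``by applying similar argument to $\det$ map as applied to $\pi$ map in above lemma,'' whereas you spell out why this translation is valid.
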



\bigskip

\begin{corollary}
\label{cor: sl2}
\hfill

\begin{enumerate}
    \item 

    Let $G$ be a group. If $[G,G]$ has index $m$ in $G$, then $[G,G]$ is the unique index-$m$ subgroup of $G$ such that the quotient is abelian. 

       \item $SL_2(\mathbb{F}_q)$ is the unique index-$(q-1)$ subgroup of  $\GL_2(\mathbb{F}_q)$ such that the quotient is abelian.

       \end{enumerate}
\end{corollary}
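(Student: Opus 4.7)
The plan is to derive both parts from a single observation: any subgroup $H$ of $G$ admitting an abelian quotient must contain the commutator subgroup $[G,G]$, and once one knows $[G,G]$ itself realizes the index $m$, no room is left for another such subgroup.

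For part (1), I would first note that if $G/H$ is a group at all then $H$ is normal in $G$, so the universal property of the commutator subgroup yields $[G,G] \subseteq H$. Combined with the tower law
\[
[G : [G,G]] \;=\; [G : H] \cdot [H : [G,G]],
\]
the hypothesis that both $[G:H]$ and $[G:[G,G]]$ equal $m$ forces $[H:[G,G]] = 1$, whence $H = [G,G]$.

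For part (2), I would simply specialize (1) to $G = \GL_2(\mathbb{F}_q)$ and $m = q-1$. The facts already recalled in the preceding paragraph of the paper give exactly what is needed: the determinant induces a surjection $\GL_2(\mathbb{F}_q) \twoheadrightarrow \mathbb{F}_q^{\times}$ with kernel $\SL_2(\mathbb{F}_q)$, hence $|\GL_2(\mathbb{F}_q)/\SL_2(\mathbb{F}_q)| = q-1$, and $[\GL_2(\mathbb{F}_q),\GL_2(\mathbb{F}_q)] = \SL_2(\mathbb{F}_q)$. Thus $\SL_2(\mathbb{F}_q)$ is an index-$(q-1)$ commutator subgroup, and part (1) delivers the uniqueness.

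I do not expect any genuine obstacle. The whole argument is an index-count resting on the universal property of $[G,G]$; the only external input is the computation $[\GL_2(\mathbb{F}_q), \GL_2(\mathbb{F}_q)] = \SL_2(\mathbb{F}_q)$, which the paper is taking as known. If one wanted to be self-contained one could cite the classical fact that $\SL_2(\mathbb{F}_q)$ is perfect for odd $q \geq 5$ (its generating elementary unipotent matrices are explicit commutators), but this is not needed for the proof of the corollary as stated.
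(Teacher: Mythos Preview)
Your proof is correct and is precisely the argument the paper has in mind: the paper explicitly recalls just before the lemma that any normal $H$ with $G/H$ abelian contains $[G,G]$, and your index-count from that containment is the natural way to extract part~(1); part~(2) is then the specialization using $[\GL_2(\mathbb{F}_q),\GL_2(\mathbb{F}_q)] = \SL_2(\mathbb{F}_q)$, which the paper also states. Since the paper leaves the corollary unproved, your write-up is exactly what a reader would supply.
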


\smallskip

For $a,b\in \mathbb{N}$, we denote $gcd(a,b)$ by $(a,b)$.

\begin{proposition}
    \label{prop: beautiful}
    
 Let $n\in \mathbb{N}$ and let $G$ be a group such that $G/[G,G]$ is cyclic of order $m$. Let $n|m$ and let $H$ be the unique index-$n$ normal subgroup of $G$ such that the quotient is abelian. Then the following holds.
 
 \smallskip
 \begin{enumerate}
     \item If there exists $x\in G$ such that $x^n=1$ and $\{1,x,x^2,...,x^{n-1}\}$  is a 
      set of representatives for $H$-cosets in $G$, then $$(n,m/n) =1.$$

\smallskip
     
     \item If $(n,m/n)=1$ and the exact sequence \begin{equation}\label{first exact}
         1\rightarrow [G,G]\rightarrow G \rightarrow G/[G,G]\rightarrow 1\end{equation} is right split, then
      there exists $x\in G$ such that $x^n=1$ and $\{1,x,x^2,...,x^{n-1}\}$
    is a set of representatives for $H$-cosets in $G$.
     \smallskip
     
     \item Let $n=m$ (hence $[G,G]$ is the unique index-$m$ subgroup of $G$ such that quotient is abelian). Then the exact sequence \ref{first exact} is right split if and only if there exists $x\in G$ such that $x^m=1$ and $\{1,x,x^2,...,x^{m-1}\}$ is a set of representatives for $[G,G]$-cosets in $G$.
 \end{enumerate}

\end{proposition}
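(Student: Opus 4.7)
The plan is to translate the statement into the cyclic quotient $G/[G,G] \cong \mathbb{Z}/m\mathbb{Z}$. By the preceding lemma, the unique index-$n$ normal subgroup $H$ with abelian quotient corresponds to the subgroup of $n$-th powers in $G/[G,G]$, namely $n\mathbb{Z}/m\mathbb{Z}$; consequently $G/H \cong \mathbb{Z}/n\mathbb{Z}$, and the induced surjection $G/[G,G] \twoheadrightarrow G/H$ is reduction modulo $n$ once a generator of $G/[G,G]$ has been fixed. Under this dictionary, the condition that $\{1, x, \ldots, x^{n-1}\}$ represent the $H$-cosets is equivalent to saying that the image of $x$ in $G/H \cong \mathbb{Z}/n\mathbb{Z}$ is a generator, i.e.\ a unit modulo $n$.

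For part (1), let $a \in \mathbb{Z}/m\mathbb{Z}$ be the image of $x$ in $G/[G,G]$. The hypothesis $x^n = 1$ forces $na \equiv 0 \pmod{m}$, hence $m/n$ divides $a$; the transversal hypothesis, as just reformulated, forces $(a, n) = 1$. Combining these immediately yields $(m/n, n) = 1$.

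For part (2), I would use the right splitting $s \colon G/[G,G] \to G$ to lift a chosen generator $\bar g$ of $G/[G,G]$ to an element $g := s(\bar g) \in G$ satisfying $g^m = 1$. Setting $x := g^{m/n}$ then gives $x^n = g^m = 1$, and the image of $x$ in $G/H \cong \mathbb{Z}/n\mathbb{Z}$ is the class of $m/n$, a generator precisely by the hypothesis $(n, m/n) = 1$. Hence $\{1, x, \ldots, x^{n-1}\}$ is a complete set of $H$-coset representatives.

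Part (3) then combines the two previous parts. The forward direction is the $n = m$ case of part (2), for which $(m, 1) = 1$ holds vacuously. For the converse, given $x$ with $x^m = 1$ whose powers exhaust the $m$ cosets of $[G,G]$, the image $\bar x$ must be a generator of $G/[G,G]$, and the assignment $\bar x^i \mapsto x^i$ extends to a well-defined homomorphism $G/[G,G] \to G$ precisely because $x^m = 1$; it is manifestly a section of \eqref{first exact}. I do not foresee a serious obstacle: the entire proposition reduces, via the identification $G/[G,G] \cong \mathbb{Z}/m\mathbb{Z}$, to elementary arithmetic in $\mathbb{Z}/m\mathbb{Z}$ together with the one explicit section-construction needed for the converse in part (3).
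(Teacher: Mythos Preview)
Your proposal is correct and follows essentially the same approach as the paper's proof: both arguments work in the cyclic quotient $G/[G,G]$, identify $H$ with the subgroup of $n$-th powers, and for part (2) lift a generator via the splitting and take its $(m/n)$-th power. Your presentation is in fact somewhat cleaner---you use additive notation in $\mathbb{Z}/m\mathbb{Z}$ and argue directly, whereas the paper uses multiplicative notation with an explicit generator $y$ and proves part (1) by contradiction via a specific choice of exponent $k=n/(n,m/n)$---but the underlying ideas are identical.
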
 
    
\begin{proof} \hfill

 \begin{enumerate}
\item  Suppose we have a set of representatives for $H$-cosets in $G$ of the form $\{1,x,x^2,...,x^{n-1}\}$ with $x\in G$ such that $x^n=1$. Let $y$ be a generator of cyclic group $G/[G,G]$ and let $0\leq l\leq (m-1)$ be such that $\pi (x)=y^l$. As $x^n = 1$, we have $1 = \pi(x)^n = y^{ln}$ and hence $m \mid ln$. Thus we have 
$$j = ln/m \in \mathbb Z \ \text{and}\ 1\leq j\leq (n-1).$$

Now assume on the contrary that $(n,m/n)\neq 1$. Then for $k=n/(n,m/n)\leq (n-1)$, 
we have $n\mid km/n$, hence $n\mid kl$, and therefore $\pi(x^k)=y^{lk}$ is an $n$-th power in $G/[G,G]$, hence $x^k\in H$. Hence we get a contradiction. Similarly we can show that $(j,n)=1$. \smallskip

\item If $(n,m/n)=1$, then we clearly have that $n\nmid km/n$ for all $1\leq k\leq (n-1)$. Consider the quotient map $\pi :G\rightarrow  G/[G,G]$. Since given exact sequence \ref{first exact} is right split, we have an injective homomorphism $\iota :  G/[G,G] \rightarrow G$ such that $\pi \circ \iota = id_{G/[G,G]}$. Let $x=\iota(y^{jm/n}) \in G$ where $1\leq j\leq (n-1)$ and $(j,n)=1$. Hence $\pi(x)= y^{jm/n}$. Since $\iota$ is injective, $x$ will be an element of order $n$.\  Now since  $n\nmid km/n$ for all $1\leq k\leq (n-1)$ and since $(j,n)=1$, we have $n\nmid jkm/n$ for all $1\leq k\leq (n-1)$. Hence for all $1\leq k\leq (n-1)$, $\pi(x^k)=y^{jkm/n}$ is not an $n$-th power in $G/[G,G]$. Hence $x^k\not \in H$. Therefore we get the required set of representatives for $H$-cosets in $G$. \smallskip

\item If the exact sequence \ref{first exact} is right split, then by (2), we are done. The converse is easy.
 \end{enumerate}
 \vspace{-.5 cm}
 \end{proof}
\smallskip

 \begin{remark} 
 \hfill
 \begin{enumerate}
     \item If $G$ is abelian, then it satisfies the hypothesis of above proposition if and only if it is cyclic.
 
 \item   The condition that $G/[G,G]$ is cyclic is important in statement (3) above. If $G$ is a non-cyclic abelian group, then $[G,G]=1$ and so exact sequence $1\rightarrow 1\rightarrow G\rightarrow G\rightarrow 1$ is clearly right split, but we can never have required representatives for $[G,G]$-cosets because of non-cyclicity of $G$. 
 
 \end{enumerate}
 \end{remark}
 \smallskip

\begin{corollary}
    \label{cor : beautiful }
Let $n\in \mathbb{N}$. Consider $q$ such that $q\equiv 1$ mod $n$. Let $H$ be the unique index-$n$ subgroup of $\GL_2(\mathbb{F}_q)$ such that the quotient is abelian. Then the following are equivalent. 

\begin{itemize}
\item $(n,(q-1)/n)=1$. \smallskip

\item There exists a set of representatives for $H$-cosets in $\GL_2(\mathbb{F}_q)$ of the form $\{1,x,x^2,...,x^{n-1}\}$ with $x\in \GL_2(\mathbb{F}_q)$ such that $x^n=I$.

\end{itemize}
 \smallskip

\end{corollary}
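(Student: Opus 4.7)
The plan is to deduce this corollary directly from Proposition \ref{prop: beautiful} by taking $G = \GL_2(\mathbb{F}_q)$. First I would verify the hypotheses of that proposition in this setting: by the discussion preceding Corollary \ref{cor: unique}, we have $[G,G] = \SL_2(\mathbb{F}_q)$ and the determinant induces an isomorphism $G/[G,G] \cong \mathbb{F}_q^\times$, which is cyclic of order $m = q-1$. Since $q \equiv 1 \bmod n$ implies $n \mid m$, Corollary \ref{cor: unique} gives the unique index-$n$ normal subgroup $H$ with cyclic quotient, matching the $H$ in the corollary statement.

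For the direction ``set of representatives of the form $\{1,x,\dots,x^{n-1}\}$ implies $(n,(q-1)/n)=1$'', I would simply invoke part (1) of Proposition \ref{prop: beautiful} verbatim; no additional work is required.

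For the converse direction, I would apply part (2) of Proposition \ref{prop: beautiful}, which requires checking that the exact sequence
\begin{equation*}
1 \longrightarrow \SL_2(\mathbb{F}_q) \longrightarrow \GL_2(\mathbb{F}_q) \xrightarrow{\ \det\ } \mathbb{F}_q^\times \longrightarrow 1
\end{equation*}
is right split. A section is given explicitly by the diagonal embedding
\begin{equation*}
\iota : \mathbb{F}_q^\times \longrightarrow \GL_2(\mathbb{F}_q), \qquad a \longmapsto \begin{pmatrix} a & 0 \\ 0 & 1 \end{pmatrix},
\end{equation*}
which is a group homomorphism satisfying $\det \circ \iota = \mathrm{id}_{\mathbb{F}_q^\times}$. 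With this splitting in hand, part (2) of Proposition \ref{prop: beautiful} produces the desired element $x \in \GL_2(\mathbb{F}_q)$ with $x^n = I$ whose first $n$ powers form a system of representatives for $H$-cosets.

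The only real step requiring thought is exhibiting the splitting of the determinant sequence, and the diagonal embedding above makes this immediate. Everything else is a direct translation of Proposition \ref{prop: beautiful} to the concrete case $G = \GL_2(\mathbb{F}_q)$.
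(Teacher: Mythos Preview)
Your proposal is correct and follows essentially the same approach as the paper: specialize Proposition~\ref{prop: beautiful} to $G=\GL_2(\mathbb{F}_q)$, using that the determinant sequence $1\to \SL_2(\mathbb{F}_q)\to \GL_2(\mathbb{F}_q)\xrightarrow{\det}\mathbb{F}_q^\times\to 1$ is right split. The only difference is that you make the splitting explicit via $a\mapsto\begin{pmatrix}a&0\\0&1\end{pmatrix}$, whereas the paper simply asserts the splitting is known and says the argument of Proposition~\ref{prop: beautiful} carries over with $\det$ in place of $\pi$.
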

    
\begin{proof}
Let $G=\GL_2(\mathbb{F}_q)$. We know that below exact sequence given by the determinant map is right split. 
\begin{equation} 
1 \rightarrow [G,G] \rightarrow G \xrightarrow{det} \mathbb{F}_q^{\times}\rightarrow 1.\end{equation} By applying similar argument to $det$ map as applied to $\pi$ map in previous proposition, we are done.
 \end{proof}
\smallskip


The following follows from Dirichlet's theorem on primes in arithmetic progression.

\begin{proposition}
    
For given $n,r\in \mathbb{N}$ such that $(r,n)=1$, there are infinitely many primes satisfying the two conditions
$$p^r   \equiv 1\  mod\ n\ \text{and}\ (n,(p^r - 1)/n)  = 1.
$$
\end{proposition}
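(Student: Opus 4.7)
The plan is to exhibit an explicit arithmetic progression for $p$ and then invoke Dirichlet's theorem. First I would reformulate the two conditions prime-by-prime. Writing $n = \prod_{\ell \mid n} \ell^{e_\ell}$, the requirement $p^r \equiv 1 \pmod n$ together with $\gcd(n,(p^r-1)/n)=1$ is equivalent to asking that $v_\ell(p^r-1) = e_\ell$ (exact valuation) for every prime $\ell \mid n$. By the Chinese Remainder Theorem it suffices to produce, for each such $\ell$, a residue class $a_\ell$ modulo $\ell^{e_\ell + 1}$ with $\gcd(a_\ell,\ell)=1$ and $v_\ell(a_\ell^r - 1) = e_\ell$, then glue these classes and apply Dirichlet to the resulting progression.

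For the local step, the assumption $(r,n)=1$ is the engine: whenever $\ell \mid n$ we automatically have $\ell \nmid r$. I would take $a_\ell := 1 + \ell^{e_\ell}$, so that the binomial expansion yields
\begin{equation*}
a_\ell^{\,r} \;=\; (1 + \ell^{e_\ell})^r \;\equiv\; 1 + r\,\ell^{e_\ell} \pmod{\ell^{e_\ell + 1}},
\end{equation*}
since every term from $k=2$ onwards carries $\ell$-adic valuation $\ge 2e_\ell \ge e_\ell+1$. Because $\ell \nmid r$, the remainder $r\,\ell^{e_\ell}$ has $\ell$-adic valuation exactly $e_\ell$, and hence $v_\ell(a_\ell^{\,r} - 1) = e_\ell$.

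Now set $N := \prod_{\ell \mid n} \ell^{e_\ell + 1}$. CRT produces a unique class $a$ modulo $N$ with $a \equiv a_\ell \pmod{\ell^{e_\ell+1}}$ for every $\ell \mid n$, and $a_\ell \equiv 1 \pmod \ell$ ensures $\gcd(a,N)=1$. Dirichlet's theorem then supplies infinitely many primes $p \equiv a \pmod N$, and for any such $p$ we have $p^r \equiv a_\ell^{\,r} \equiv 1 + r\,\ell^{e_\ell} \pmod{\ell^{e_\ell + 1}}$ for every $\ell \mid n$, which is precisely the prime-by-prime condition identified in the first paragraph. The only potential subtlety is the usual $\ell=2$ exception in $p$-adic exponential calculations, but if $2 \mid n$ then $(r,n)=1$ forces $r$ odd, and a direct verification (e.g.\ $3^r \equiv 3 \pmod 4$ for odd $r$ handles the awkward case $e_2=1$) shows the same binomial computation goes through uniformly. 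Thus Dirichlet does the heavy lifting and no real obstacle remains beyond this small bookkeeping check.
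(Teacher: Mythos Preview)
Your proof is correct and follows exactly the route the paper indicates: the paper's entire argument is the single sentence ``follows from Dirichlet's theorem on primes in arithmetic progression,'' and you have supplied precisely the missing details (the valuation reformulation, the explicit local choice $a_\ell = 1 + \ell^{e_\ell}$, the CRT glue, and the Dirichlet invocation). Your caution about $\ell=2$ is in fact unnecessary---the binomial estimate $2e_\ell \ge e_\ell+1$ already holds uniformly for $e_\ell \ge 1$---but it does no harm.
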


\subsection{Galois correspondence, right splitting and Galois theoretic results} We discuss a result here that establishes the occurrence of certain semidirect product of groups as a Galois group.\smallskip

\begin{theorem}

\label{thm: compositum}
    Let $l\geq 2$. Let $K$ be a  finite extension of $\mathbb{Q}$ and let $E_1,E_2,..., E_l$ be finite Galois extensions of $\mathbb{Q}$ contained in $\bar{\mathbb{Q}}$. Let $G_i$ be the Galois group of $E_i$ over $\mathbb{Q}$ for all $i$. Suppose  \smallskip
     \begin{enumerate}  
     
     \item $E_1\cdots E_k \cap  E_{k+1} = K$ for all $1\leq k\leq l-1$,\smallskip

     \item for every $i$, there exists a set of representatives of $H_i$-cosets in respective $G_i$ that is closed under multiplication, where $H_i$ is Galois group of $E_i$ over $K$. \smallskip
     
     \end{enumerate}
     
     Then for each $1\leq i \leq l$, $H_i$ is a normal subgroup of respective $G_i$ and the group $(H_1\times H_2\times \dots \times H_l)\rtimes G_1/H_1$ (for some semidirect product group law) is realizable as the Galois Group of $E_1\cdots E_l$ over $\mathbb{Q}$.
\end{theorem}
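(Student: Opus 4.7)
The plan is to realize $\mathrm{Gal}(E_1 \cdots E_l/\mathbb{Q})$ as a fiber product sitting inside $G_1 \times \cdots \times G_l$ and then glue together a section on each factor to produce the desired semidirect-product decomposition.

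First I would observe that $K/\mathbb{Q}$ is itself Galois: taking $k=1$ in condition~(1) gives $K = E_1 \cap E_2$, and the intersection of two Galois extensions of $\mathbb{Q}$ is Galois. Hence each $H_i = \mathrm{Gal}(E_i/K)$ is \emph{normal} in $G_i$, and restriction to $K$ yields a canonical identification $G_i/H_i \cong \mathrm{Gal}(K/\mathbb{Q})$ that is the same for every $i$. Next I would unpack condition~(2): a non-empty subset of a finite group that is closed under multiplication is automatically a subgroup, since the powers of any element generate a finite cyclic subgroup and thus yield both $1$ and inverses. So the given transversal is a subgroup $S_i \leq G_i$ with $S_i \cap H_i = \{1\}$ and $S_i H_i = G_i$; together with the normality of $H_i$ this gives $G_i = H_i \rtimes S_i$, and the restriction map provides an isomorphism $\phi_i : \mathrm{Gal}(K/\mathbb{Q}) \xrightarrow{\sim} S_i$ that splits $G_i \twoheadrightarrow G_i/H_i$.

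Third, I would use condition~(1) and induction on $l$ to obtain $\mathrm{Gal}(E_1 \cdots E_l / K) \cong H_1 \times \cdots \times H_l$: at each stage, $(E_1 \cdots E_k) \cap E_{k+1} = K$ with both factors Galois over $K$ forces the Galois group of the compositum over $K$ to split as a direct product. Setting $L = E_1 \cdots E_l$, the short exact sequence $1 \to \mathrm{Gal}(L/K) \to \mathrm{Gal}(L/\mathbb{Q}) \to \mathrm{Gal}(K/\mathbb{Q}) \to 1$ together with a cardinality count identifies $\mathrm{Gal}(L/\mathbb{Q})$, under the diagonal restriction embedding, with the fiber product
\[ \{(\sigma_1, \ldots, \sigma_l) \in G_1 \times \cdots \times G_l : \sigma_i|_K = \sigma_j|_K \text{ for all } i,j\}. \]

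Finally, I would build the splitting explicitly by setting $S = \{(\phi_1(\tau), \ldots, \phi_l(\tau)) : \tau \in \mathrm{Gal}(K/\mathbb{Q})\}$. Each such tuple lies in the fiber product because $\phi_i(\tau)|_K = \tau$ is independent of $i$, so $S \subseteq \mathrm{Gal}(L/\mathbb{Q})$; it is a subgroup because each $\phi_i$ is a homomorphism; it meets $H_1 \times \cdots \times H_l$ trivially because each coordinate lies in $S_i \cap H_i = \{1\}$; and a size count forces $S \cdot (H_1 \times \cdots \times H_l) = \mathrm{Gal}(L/\mathbb{Q})$. This yields $\mathrm{Gal}(L/\mathbb{Q}) \cong (H_1 \times \cdots \times H_l) \rtimes S \cong (H_1 \times \cdots \times H_l) \rtimes G_1/H_1$, with the action given coordinatewise by conjugation. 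The main subtlety I anticipate is justifying that condition~(2) yields an actual subgroup rather than merely a transversal; once that is in hand, the per-factor sections $\phi_i$ patch automatically into a global section thanks to the fiber-product description of $\mathrm{Gal}(L/\mathbb{Q})$.
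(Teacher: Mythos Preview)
Your proof is correct and follows essentially the same strategy as the paper: embed $\mathrm{Gal}(E_1\cdots E_l/\mathbb{Q})$ into $G_1\times\cdots\times G_l$ via restriction, identify the image as the fiber product over $\mathrm{Gal}(K/\mathbb{Q})$, and use the multiplicatively closed transversals (which, as both you and the paper's Remark~\ref{remark proof} observe, are actually subgroups) to split the resulting short exact sequence. The only difference is organizational: the paper carries both the $K$-structure and the semidirect-product structure through an induction on $l$ with a detailed base case $l=2$, whereas you induct only to obtain $\mathrm{Gal}(L/K)\cong H_1\times\cdots\times H_l$ and then build the global section for all $l$ factors simultaneously via the diagonal $\tau\mapsto(\phi_1(\tau),\ldots,\phi_l(\tau))$.
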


\begin{proof}
     We will prove by induction that ${\rm Gal}(E_1E_2\cdots E_k/K)\cong (H_1\times H_2\times \dots \times H_k)$ and ${\rm Gal}(E_1E_2\cdots E_k/\mathbb{Q})\cong (H_1\times H_2\times \dots \times H_k)\rtimes G_1/H_1$ (for some semidirect product group law) for all $2\leq k\leq l$.\smallskip
     
     For base case $k=2$, see the following diagram. \smallskip
   
  Since, $E_1,E_2$ are Galois over $\mathbb{Q}$ and are contained in $\bar{\mathbb{Q}}$, $E_1E_2$ and $E_1\cap E_2$ are defined and are Galois over $Q$ and $K=E_1\cap E_2$. We have ${\rm Gal}(E_i/\mathbb{Q})=G_i$ and ${\rm Gal}(E_i/K)=H_i$ for $i=1,2$. Since $K$ is Galois over $\mathbb{Q}$, $H_i$s are normal in respective $G_i$s. ${\rm Gal}(K/\mathbb{Q})\cong G_1/H_1 \cong G_2/H_2$. Fix an isomorphism $\theta$ from $G_2/H_2$ to $G_1/H_1$.  
    \smallskip 
    
     \begin{center}
     \begin{tikzpicture}\label{diagram-compositum}
   \node (Q0) at (0,-2) {\small $\mathbb{Q}$};
    \node (Q1) at (0,0) {\small $E_1\cap E_2$};
    \node (Q2) at (1.7,2) {\small $E_2$};
    \node (Q3) at (0,4) {\small $E_1 E_2$};
    \node (Q4) at (-1.7,2) {\small $E_1$};
    \draw (Q1)--(Q2) node [pos=0.5, above,inner sep=0.25cm] {\small $H_2$};
    \draw (Q1)--(Q4) node [pos=0.5, above,inner sep=0.25cm] {\small $H_1$};
    \draw (Q3)--(Q4) node [pos=0.5, above ,inner sep=0.25cm] {\small $H_2$};
    \draw (Q2)--(Q3) node [pos=0.5, above ,inner sep=0.25cm] {\small $H_1$};
    \draw (Q1)--(Q3) node [pos=0.6, left,inner sep=0.1cm] {\small $H_1\times H_2$};
    \draw (Q0)--(Q1) node [pos=0.25, left,inner sep=0.00cm] {\small  $G_1/H_1$};
    \draw (Q4)--(Q0) node [pos=0.5, below,inner sep=0.5cm] {\small $G_1$};
    \draw (Q2)--(Q0) node [pos=0.5, below,inner sep=0.5cm] {\small $G_2$};
     \draw  (Q3) to [out=3, in=3](Q0) node [pos=.75, right, inner sep= 1.5  cm]{$M$};
    \end{tikzpicture}
    
    \end{center}
 \smallskip
    
    Let ${\rm Gal}(K/\mathbb{Q})=\{x_i H_1| 1\leq i \leq n \}$ where $x_iH_1=\theta(y_iH_2)$ for all $i$. Here $\{x_i\in G_1|1\leq i\leq n\}$ and $\{y_i\in G_2|1\leq i\leq n\}$ are sets of representatives of $H_1$ and $H_2$ cosets in $G_1$ and $G_2$ respectively, which are closed under multiplication.\smallskip
    
    By Galois Correspondence \cite[Theorem 2.1]{conrad2023galois} and \cite[Theorem 2.6]{conrad2023galois},
    we have ${\rm Gal}(E_1 E_2/E_1)\cong H_2$, ${\rm Gal}(E_1 E_2/E_2)\cong H_1$ and ${\rm Gal}(E_1 E_2/K)\cong H_1\times H_2$.\smallskip
    
    Let ${\rm Gal}(E_1 E_2 /\mathbb{Q})=M$. We have 
    \[
    \begin{split}
    |M| & = [E_1 E_2 : K] \ [K:\mathbb{Q}]  = |H_1\times H_2| \ |G_1/H_1| \\
    & = |H_1\times H_2| \ |G_2/H_2|=|G_1| \ |H_2|=|G_2|\ |H_1|. 
    \end{split}
    \]
    
    Consider map $\sigma :M\rightarrow G_1\times G_2$ given by $\sigma (g)=(g_1,g_2)$, where $g_i=g|_{E_i}$ for $i=1,2$. The map $\sigma$ is clearly a well defined injective group homomorphism (see \cite[Theorem 1.1]{conrad2023galois}). \smallskip
    
    If $g\in M$ and $g|_K=x_iH_1=g_1|_{K}$, then $(x_i^{-1}g_1)|_{K}=id_{K}$. Hence $x_i^{-1}g_1\in H_1$, thus $g_1\in x_iH_1$. Similarly, by $g|_K=x_iH_1=\theta(g_2|_{K})=\theta(y_iH_2)$, $g_2\in y_iH_2$.\smallskip
    
    Hence ${\rm Image}(\sigma)\subset \bigsqcup \limits_{i=1}^n (x_iH_1\times y_iH_2)$. Since $|\bigsqcup \limits_{i=1}^n (x_iH_1\times y_iH_2)|=|H_1\times H_2||G_1/H_1|=|M|$ and $\sigma$ is injective, we get ${\rm Image}(\sigma)= \bigsqcup \limits_{i=1}^n (x_iH_1\times y_iH_2)$. Hence, $M\cong \bigsqcup \limits_{i=1}^n (x_iH_1\times y_iH_2)$.  \smallskip
    
     Consider a map $\psi:\bigsqcup \limits_{i=1}^n (x_iH_1\times y_iH_2)\rightarrow (H_1\times H_2)\rtimes G_1/H_1$ given by 
    $\psi((x_ih_1,y_ih_2))=((h_1,h_2),x_iH_1)$ where we define a semidirect product group law for $(H_1\times H_2)\rtimes G_1/H_1$ by 
    $$((h_1,h_2),x_iH_1).((k_1,k_2),x_jH_1)=((x_j^{-1}h_1x_jk_1, y_j^{-1}h_2y_jk_2),x_ix_jH_1).$$ 
    
    This group law is well defined and associative since $H_i$ are normal in $G_i$ for $i=1,2$ and sets of their respective coset representatives are closed under multiplication. We observe that $$(x_ih_1,y_ih_2).(x_jk_1,y_jk_2)=(x_ih_1x_jk_1,y_ih_2y_jk_2)=((x_ix_j)(x_j^{-1}h_1x_jk_1),(y_iy_j)(y_j^{-1}h_2y_jk_2)).$$
    
     Hence we conclude that $\psi$ is a group isomorphism and hence $M\cong (H_1\times H_2)\rtimes G_1/H_1$.  \smallskip
    
    Alternatively, consider the sequence \begin{equation}
        1\rightarrow (H_1\times H_2)\xrightarrow{i} \bigsqcup \limits_{i=1}^n (x_iH_1\times y_iH_2) \xrightarrow{\pi} G_1/H_1\rightarrow 1\end{equation} where $i(h_1,h_2)=(x_1h_1,y_1h_2)$ ($x_1\in H_1,y_1\in H_2$) and $\pi(x_i h_1,y_i h_2)=x_i H_1$. Because of our multiplicatively closed assumption, $x_1$ and $y_1$ are identities of $G_1$ and $G_2$ respectively. Hence $i$ is injective. Also $\pi$ is surjective and $ker(\pi)=Image(i)$. Thus the sequence is exact.\smallskip
    
 Now consider $\iota : G_1/H_1 \rightarrow \bigsqcup \limits_{i=1}^n (x_iH_1\times y_iH_2)$ with $\iota(x_i H_1)=(x_i,y_i)$. The map $\iota$ is clearly a group homomorphism because of our multiplicatively closed assumption and also $\pi \circ \iota = id_{G_1/H_1}$. Hence the sequence is right split. Thus $M\cong (H_1\times H_2)\rtimes G_1/H_1$.
 \smallskip

     Now assume that the statement is true for $k=m$, where $2\leq m\leq l-1$, that is, 
     $${\rm Gal}(E_1E_2\cdots E_m/K)\cong (H_1\times H_2\times \dots \times H_m),$$ 
     $$\text{and}, {\rm Gal}(E_1E_2\cdots E_m/\mathbb{Q})\cong (H_1\times H_2\times \dots \times H_m)\rtimes G_1/H_1,$$
      we will prove the statement for $k=m+1$. \smallskip
     
Let $F_1=E_1\cdots E_m$ and $F_2=E_{m+1}$. Hence
\[      
\begin{split}
{\rm Gal}(F_1/\mathbb{Q}) & = {\rm Gal}(E_1E_2\cdots E_m/\mathbb{Q})\cong (H_1\times H_2\times \dots \times H_m)\rtimes G_1/H_1,\\      
 {\rm Gal} (F_2/\mathbb{Q})& ={\rm Gal}(E_{m+1}/\mathbb{Q})\cong G_{m+1},\\
  {\rm Gal}(F_1/K) &= {\rm Gal}(E_1E_2\cdots E_m/K)\cong (H_1\times H_2\times \dots \times H_m), \\
  {\rm Gal}(F_2/K) &= {\rm Gal}(E_{m+1}/K)\cong H_{m+1}\ \text{since} F_1\cap F_2 =E_1\cdots E_m \cap  E_{m+1} = K.
 \end{split}
\]       

Hence $F_1$ and $F_2$ satisfy conditions of base case and thus we have
 \[
 \begin{split}
 {\rm Gal}(E_1E_2\cdots E_{m+1}/K) & = {\rm Gal}(F_1F_2/K) \cong {\rm Gal}(F_1/K)\times {\rm Gal}(F_2/K)\\
 & \cong  (H_1\times H_2\times \dots \times H_m)\times H_{m+1} = H_1\times H_2\times \dots \times H_{m+1}
 \end{split}
         \]

\text{and},
\[
 \begin{split} 
  {\rm Gal}(E_1E_2\cdots E_{m+1}/\mathbb{Q}) & = {\rm Gal}(F_1F_2/\mathbb{Q}) \cong {\rm Gal}(F_1F_2/K) \rtimes G_1/H_1 \\
  & \cong (H_1\times H_2\times \dots \times H_{m+1})\rtimes G_1/H_1. \end{split}
  \]

  \vspace{-.6 cm}
  \end{proof}

\begin{remark} \label{remark proof} \hfill
 \begin{enumerate}
     \item 
   If $[G:H]$ is finite, then
 a set of representatives for $H$-cosets in $G$ satisfies the condition of closed under multiplication if and only if it is a subgroup of $G$.
 
 In particular,  A set of representatives $\{1,x,x^2,...,x^{n-1}\}$ for $H$-cosets in $G$ which forms a cyclic subgroup of $G$, satisfies the condition. We get a criterion for existence of such a set of representatives  in Proposition~\ref{prop: beautiful}.

\item
 The second condition assumed in the theorem is equivalent to exact sequences \begin{equation} 1\rightarrow H_i \rightarrow G_i \rightarrow G_i/H_i \rightarrow 1\end{equation} being right split, that is $G_i \cong H_i \rtimes G_i/H_i$ for some semidirect product group law. 
  \end{enumerate}
 
 \end{remark}

\begin{remark}
    
 Let $d_i$ be distinct primes for $i=1,2$ and $d_3=d_1d_2$. Let $K=\mathbb{Q}$ and consider quadratic extensions $E_i=\mathbb{Q}\sqrt{d_i}$ for $1\leq i \leq 3$. \smallskip

 Clearly $E_i\cap E_j=K$ for all $i\neq j$, but we have $E_iE_j\cap E_k=E_k\neq K$ where $i,j,k$ are a permutation of $1,2,3$.
So the assumed condition in the above theorem is not always satisfied and hence it is important.
\end{remark}

\begin{remark}
    
 In the above theorem, we could have assumed a symmetric but stronger condition \\ $E_1...E_{i-1}E_{i+1}...E_{n} \cap E_i=K$ for all $1\leq i\leq n$ which implies the condition that we have assumed $E_1...E_k \cap  E_{k+1} = K$ for all $1\leq k\leq n-1$ since $E_1...E_k \cap  E_{k+1}\subset E_1...E_kE_{k+2}...E_n \cap  E_{k+1}=K$. But it would have made our theorem weaker. (Note: Condition for $i=1$ is not required, it is written for symmetry).
\end{remark}

\begin{remark} \hfill
   \begin{enumerate}

   \item 
   Under the conditions of above theorem, we have $(H_1\times H_2\times \dots \times H_l)\rtimes G_1/H_1\xhookrightarrow{}  G_1 \times G_2\times \dots \times G_l$ from \cite[Theorem 1.1]{conrad2023galois}. 
       \item If all $G_i$s are abelian in above theorem that is each $E_i$ is an abelian extension of $\mathbb{Q}$, then the compositum is an abelian extension over $\mathbb{Q}$ and the semidirect product that we defined is in fact the direct product.\smallskip
   \item However to the best of our knowledge, one doesn't know in general whether $(H_1\times H_2\times \dots \times H_l)\times G_1/H_1$ is realizable as a Galois group over $\mathbb{Q}$.
   \end{enumerate}
\end{remark}

We now describe an independently interesting consequence of the Galois Correspondence (see  \cite[Theorem 2.1]{conrad2023galois} and \cite[Theorem 2.6]{conrad2023galois}.) \smallskip

\begin{proposition} 
    Let finite groups $H_1,H_2,..., H_n$ be Galois groups over $K$ of extensions $E_1,E_2,..., E_n$ respectively which are contained in $\bar{\mathbb{Q}}$. Fix $k\leq n-1$ and suppose $E_{i_1}\cdots E_{i_j} \cap  E_{i_{j+1}} = K$ for all $1\leq j\leq k-1$ where $i_l$ are distinct elements from $1$ to $k$. Then the following statements are equivalent.  \smallskip
    \begin{enumerate}
        \item  $E_1\cdots E_k \cap  E_{k+1} = K$.  
         \smallskip
        \item $E_{i_1}\cdots E_{i_k} \cap E_{i_{k+1}}=K$ where $i_l$ are distinct elements from $1$ to $k+1$. 
         \smallskip
        \item $E_1\cdots E_{i-1}E_{i+1}\cdots E_{k+1} \cap  E_1\cdots E_{j-1}E_{j+1}\cdots E_{k+1} = E_1\cdots E_{i-1}E_{i+1}\cdots E_{j-1}E_{j+1}\cdots E_{k+1}$ for any $1\leq i < j \leq k+1$.

         \end{enumerate}

\end{proposition}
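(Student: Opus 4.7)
The plan is to show that each of (1), (2), (3) is equivalent to the single \emph{degree equality} $(\star)$: $[E_1\cdots E_{k+1}:K]=\prod_{i=1}^{k+1}[E_i:K]$. As a preliminary step, I would iterate the chain hypothesis together with the tower law and the standard Galois identity $[F_1F_2:F_1]=[F_2:F_1\cap F_2]$ (valid for $F_1,F_2$ Galois over $K$) to conclude, by induction on $s$, that $[E_{i_1}\cdots E_{i_s}:K]=\prod_{l=1}^{s}[E_{i_l}:K]$ for every tuple of distinct indices drawn from $\{1,\dots,k\}$; in particular, $\mathrm{Gal}(E_1\cdots E_k/K)\cong H_1\times\cdots\times H_k$. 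I will use this structural fact freely below.

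For $(1)\Leftrightarrow(\star)$ and $(2)\Leftrightarrow(\star)$, I would peel off the last factor in the tower
\[ [E_{i_1}\cdots E_{i_{k+1}}:K] \;=\; [E_{i_1}\cdots E_{i_{k+1}}:E_{i_1}\cdots E_{i_k}]\cdot[E_{i_1}\cdots E_{i_k}:K], \]
converting the first factor into $[E_{i_{k+1}}:E_{i_{k+1}}\cap E_{i_1}\cdots E_{i_k}]$ via Galois correspondence. The trivial inequalities $[E_{i_1}\cdots E_{i_k}:K]\leq\prod_{l=1}^{k}[E_{i_l}:K]$ and $[E_{i_{k+1}}:E_{i_{k+1}}\cap E_{i_1}\cdots E_{i_k}]\leq[E_{i_{k+1}}:K]$, combined with $(\star)$, force equality in both, which is exactly (2) for the chosen ordering. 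Taking the natural ordering yields (1); conversely, (1) together with the preliminary step reads off $(\star)$ directly from the tower.

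For $(3)\Leftrightarrow(\star)$, write $L_s:=E_1\cdots\widehat{E_s}\cdots E_{k+1}$ and let $M_{ij}$ denote the compositum omitting both $E_i$ and $E_j$. Observe that $L_iL_j=E_1\cdots E_{k+1}$ and $M_{ij}\subseteq L_i\cap L_j$ always hold. The compositum-degree formula gives $[L_i\cap L_j:K]=[L_i:K][L_j:K]/[E_1\cdots E_{k+1}:K]$; under $(\star)$ every sub-compositum has the expected degree (by the same degree-counting argument applied in the $(k+1)$-fold setting), so the right-hand side equals $[M_{ij}:K]$, forcing $L_i\cap L_j=M_{ij}$, which is (3). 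For the converse, I would fix $i\in\{1,\dots,k\}$ and take $j=k+1$ in (3); since $E_{k+1}\subset L_i$ and $E_1\cdots E_k=L_{k+1}$, this yields $E_{k+1}\cap E_1\cdots E_k\subseteq M_{i,k+1}=E_1\cdots\widehat{E_i}\cdots E_k$. Intersecting over $i=1,\dots,k$ and using the identification $\mathrm{Gal}(E_1\cdots E_k/K)\cong H_1\times\cdots\times H_k$---under which $E_1\cdots\widehat{E_i}\cdots E_k$ is the fixed field of the $i$-th factor subgroup, while these factor subgroups together generate the whole Galois group---the intersection of these fixed fields collapses to $K$, delivering (1).

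The main obstacle I anticipate is the direction $(3)\Rightarrow(1)$: one must exploit condition (3) simultaneously across every pair $(i,k+1)$ with $i\leq k$ and then collapse the resulting chain of containments using the full linear-disjointness structure on $E_1,\dots,E_k$ supplied by the hypothesis. The remaining implications require only the tower law, Galois correspondence, and the standard compositum-degree formula, and are essentially mechanical once the preliminary structural observation is in place.
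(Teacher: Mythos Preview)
Your proof is correct and follows essentially the same architecture as the paper's: both arguments pivot on a single ``hub'' condition---you phrase it as the degree equality $(\star)$, the paper phrases it as ${\rm Gal}(E_1\cdots E_{k+1}/K)\cong H_1\times\cdots\times H_{k+1}$---and these are equivalent since the restriction map always embeds the Galois group of the compositum into the product. The equivalences $(1)\Leftrightarrow(\star)$, $(2)\Leftrightarrow(\star)$, and $(\star)\Rightarrow(3)$ are handled the same way in both (tower law plus Galois correspondence and the compositum-degree formula). The one genuine difference is in $(3)\Rightarrow(1)$: the paper applies (3) to a pair $(i,j)$ with $j=k+1$ to deduce $D_i\cap E_i=K$ via $D_{ij}\cap E_i=K$, and then asserts the full product structure of ${\rm Gal}(D/K)$ in a single (rather terse) step; you instead observe $E_{k+1}\cap(E_1\cdots E_k)\subseteq M_{i,k+1}=E_1\cdots\widehat{E_i}\cdots E_k$ for each $i\le k$ and intersect over all such $i$, using the already-established product structure on ${\rm Gal}(E_1\cdots E_k/K)$ to collapse the intersection to $K$. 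Your route here is a bit more explicit and self-contained, while the paper's route is shorter but leaves more to the reader; both ultimately rely on the same Galois-theoretic facts.
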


\begin{proof} Because of given conditions and induction we get, ${\rm Gal}(E_{i_1}\cdots E_{i_j}/K)\cong H_{i_1}\times...\times H_{i_j}$ for all $j\leq k$ where $i_l$ are distinct elements from $1$ to $k$
    and ${\rm Gal}(E_{i_1}\cdots E_{i_j}/E_{i_1}\cdots E_{i_{m-1}}E_{i_{m+1}}\cdots E_{i_j})\cong H_{i_m}$ for all $j\leq k$ and $1\leq m\leq j$ where $i_l$ are distinct elements from $1$ to $k$.
 \smallskip

\noindent Equivalence of (1) and (2): For $i_l$ distinct elements from $1$ to $k+1$,\smallskip

$E_1...E_k \cap  E_{k+1} = K$.\\ $\iff$ ${\rm Gal}(E_{1}\cdots E_{k+1}/K)\cong H_{1}\times\dots \times H_{k+1}$.\\
$\iff$  $E_{i_1}\cdots E_{i_k} \cap E_{i_{k+1}}=K$. \smallskip

\noindent Equivalence of (1) and (3): For any $1\leq i < j \leq k+1$, let 
\[
\begin{split}
D&=E_{1}\cdots E_{k+1}, \\
 D_i &= E_{1}\cdots E_{i-1}E_{i+1}\cdots E_{k+1},\\
  D_j &= E_{1}\cdots E_{j-1}E_{j+1}\cdots E_{k+1},\\
  D_{ij} &= E_1...E_{i-1}E_{i+1}...E_{j-1}E_{j+1}...E_{k+1}.
  \end{split}
  \]

Now since ${\rm Gal}(D_i / D_{ij})\cong H_{j}$ and ${\rm Gal}(D_j /D_{ij})\cong H_{i}$, we have by \cite[Theorem 1.1]{conrad2023galois}, 
$${\rm Gal}(D/D_{ij}) \xhookrightarrow{} H_i\times H_j.$$

We also have
\[
\begin{split}
{\rm Gal}(D/K)/ {\rm Gal}(D/D_{ij}) & \cong {\rm Gal}(D_{ij}/ K) \\
& \cong H_1\times \dots \times H_{i-1} \times H_{i+1}\times \dots \times H_{j-1} \times H_{j+1} \times \dots \times H_{k+1}.
\end{split}
\]

\noindent Hence,
\[
\begin{split}
& {\rm Gal}(D/K)  \cong  H_{1}\times \dots \times H_{k+1}\\
 \implies &  {\rm Gal}(D/D_{ij})\cong H_i\times H_j \\
 \implies & D_i \cap  D_j  = D_{ij}.
\end{split}
\]

\noindent Conversely, 
\[
\begin{split}
& D_i \cap  D_j= D_{ij} \\ 
 \implies & D_i \cap  E_i \subset D_i  \cap  D_j = D_{ij}\\ 
 \implies & D_i \cap  E_i \subset D_{ij} \cap E_i=K \\
\implies & D_i \cap  E_i =K \\ 
\implies & {\rm Gal}(D/K)\cong H_{1}\times \cdots \times H_{k+1}.
 \end{split}
\]
\vspace{-.8 cm}

\end{proof}

\smallskip

\subsection{Application to cases in work of Arias-de-Reyna \& K{\"o}nig}

\begin{theorem}\label{semidirectwithGL2}

  Let $n\in \mathbb{N}$. For $q$ suppose the following hold. 

  \begin{enumerate}
      \item  $q\equiv 1$ $mod$ $n$ and $(n,(q-1)/n)=1$. 
  
  \item The group $\GL_2(\mathbb{F}_q)$ is realizable as Galois group over $\mathbb{Q}$ of extensions $E_1,E_2$ which are contained in $\bar{\mathbb{Q}}$ such that $E_1\cap E_2=K$ is a degree $n$ abelian extension of $\mathbb{Q}$.
  
  \end{enumerate}
  
  Then  $(H\times H)\rtimes \GL_2(\mathbb{F}_q)/H$ (with semidirect product group law as in Theorem \ref{thm: compositum}) is realizable as Galois group over $\mathbb{Q}$ where $H$ is the unique index-$n$ subgroup of $\GL_2(\mathbb{F}_q)$ such that the quotient is abelian. 
\end{theorem}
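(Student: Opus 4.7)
The plan is to apply Theorem~\ref{thm: compositum} directly with $l=2$, $G_1 = G_2 = \GL_2(\mathbb{F}_q)$, and $H_1 = H_2 = H$. So I need to verify that the two hypotheses of that theorem are met, namely (a) $E_1 \cap E_2 = K$ (already assumed), and (b) each of $\mathrm{Gal}(E_1/K)$ and $\mathrm{Gal}(E_2/K)$ admits a set of coset representatives in the respective $\mathrm{Gal}(E_i/\mathbb{Q})$ that is closed under multiplication.

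First I would identify the subgroups $\mathrm{Gal}(E_i/K)$ with the abstract subgroup $H$ of $\GL_2(\mathbb{F}_q)$ described in the statement. Since $K/\mathbb{Q}$ is abelian and Galois of degree $n$ and sits inside $E_i$, the Galois correspondence shows that under the identification $\mathrm{Gal}(E_i/\mathbb{Q}) \cong \GL_2(\mathbb{F}_q)$, the subgroup $\mathrm{Gal}(E_i/K)$ is a normal subgroup of index $n$ with abelian (in fact cyclic) quotient. By Corollary~\ref{cor: unique}, there is exactly one such subgroup of $\GL_2(\mathbb{F}_q)$, so $\mathrm{Gal}(E_i/K)$ must correspond to the $H$ in the statement. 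In particular, $H$ is normal in each copy of $\GL_2(\mathbb{F}_q)$ and $\GL_2(\mathbb{F}_q)/H$ is (isomorphic to) $\mathrm{Gal}(K/\mathbb{Q})$, a cyclic group of order $n$.

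Next I would produce the multiplicatively closed set of coset representatives. This is exactly what Corollary~\ref{cor : beautiful } delivers: the arithmetic hypotheses $q \equiv 1 \pmod n$ and $(n,(q-1)/n)=1$ guarantee the existence of an element $x \in \GL_2(\mathbb{F}_q)$ with $x^n = I$ such that $\{1,x,x^2,\dots,x^{n-1}\}$ is a transversal for $H$ in $\GL_2(\mathbb{F}_q)$. This transversal is a cyclic subgroup, hence trivially closed under multiplication, as emphasized in Remark~\ref{remark proof}(1). Pulling this back via the isomorphisms $\mathrm{Gal}(E_i/\mathbb{Q}) \cong \GL_2(\mathbb{F}_q)$ gives the required transversal of $H_i = \mathrm{Gal}(E_i/K)$ in $G_i = \mathrm{Gal}(E_i/\mathbb{Q})$ for both $i = 1, 2$.

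With both hypotheses of Theorem~\ref{thm: compositum} verified, that theorem immediately yields that $E_1 E_2/\mathbb{Q}$ is Galois with $\mathrm{Gal}(E_1 E_2/\mathbb{Q}) \cong (H \times H) \rtimes \GL_2(\mathbb{F}_q)/H$ for the semidirect product law constructed there, which is the desired conclusion. The only nontrivial step is the identification in the first paragraph; once $H_i = H$ is established, everything else is a mechanical invocation of the results already proved. I do not anticipate a genuine obstacle, since the arithmetic assumption on $(n,(q-1)/n)$ was tailored precisely to feed Corollary~\ref{cor : beautiful } and produce the cyclic transversal required by Theorem~\ref{thm: compositum}.
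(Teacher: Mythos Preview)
Your proposal is correct and follows essentially the same approach as the paper: apply Theorem~\ref{thm: compositum} with $l=2$ and $G_1=G_2=\GL_2(\mathbb{F}_q)$, identify $H_1=H_2=H$ via the uniqueness in Corollary~\ref{cor: unique}, and invoke Corollary~\ref{cor : beautiful } to obtain the cyclic transversal. Your write-up is in fact more explicit than the paper's in justifying why $\mathrm{Gal}(E_i/K)$ must equal $H$, but the argument is the same.
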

\begin{proof}
     We have for $l=2$, $E_1,E_2$ satisfying conditions of Theorem \ref{thm: compositum} with $G_1=G_2=\GL_2(\mathbb{F}_q)$, and $[G_1:H_1] =n$ and $G_1/H_1$ is abelian group and $q\equiv 1$ $mod$ $n$. Therefore $H_1=H_2=H$. Since $q\equiv 1$ $mod$ $n$ and $(n,(q-1)/n)=1$, from Corollary \ref{cor : beautiful }, we get the required set of representatives for $H$-cosets in $\GL_2(\mathbb{F}_q)$.\end{proof}

\begin{remark}
    
 Because of the conditions on $p$ and $n$ in above theorem, we actually get that $K$ is a cyclic extension (not just abelian).
 \end{remark}

 \smallskip

 If extensions $E_1,E_2$ of $\mathbb{Q}$ contained in $\bar{\mathbb{Q}}$, are linearly disjoint over an extension $K$ of $\mathbb{Q}$, then $E_1\cap E_2=K$.
     We get from \cite[Remark 4.4]{arias2022locally} and \cite[Corollary 4.3]{arias2022locally}, that there are two Galois extensions $L_1,L_2$ over $\mathbb{Q}$ with Galois group $\GL_2(\mathbb{F}_p)$ which are linearly disjoint over $\mathbb{Q}(\zeta_p)$.

\begin{corollary}\label{sl2 semi}
For a prime $p\geq 5$,  $(SL_2(\mathbb{F}_p)\times SL_2(\mathbb{F}_p))\rtimes \mathbb{Z}/(p-1)\mathbb{Z}$ (with semidirect product group law as in above theorem) is realizable as Galois group over $\mathbb{Q}$.    
\end{corollary}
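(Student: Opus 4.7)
The plan is to apply Theorem~\ref{semidirectwithGL2} directly, with $q=p$ and $n=p-1$. My first step would be to verify the arithmetic hypotheses of that theorem, both of which are essentially trivial: $p\equiv 1\pmod{p-1}$ since $p=(p-1)+1$, and $\bigl(p-1,\,(p-1)/(p-1)\bigr)=(p-1,1)=1$.

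Next I would secure the Galois-theoretic input required by hypothesis (2) of Theorem~\ref{semidirectwithGL2}. Here I would cite \cite[Remark~4.4]{arias2022locally} and \cite[Corollary~4.3]{arias2022locally}, which produce two Galois extensions $L_1,L_2/\mathbb{Q}$ with Galois group $\GL_2(\mathbb{F}_p)$ that are linearly disjoint over $\mathbb{Q}(\zeta_p)$. As noted in the remark preceding the statement, linear disjointness over $\mathbb{Q}(\zeta_p)$ forces $L_1\cap L_2=\mathbb{Q}(\zeta_p)$, and $K:=\mathbb{Q}(\zeta_p)$ is the required abelian (even cyclic) extension of $\mathbb{Q}$ of degree $p-1$, matching $n=p-1$.

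The remaining step is to identify the abstract group delivered by Theorem~\ref{semidirectwithGL2}. By Corollary~\ref{cor: sl2}, the unique index-$(p-1)$ subgroup $H$ of $\GL_2(\mathbb{F}_p)$ with abelian quotient is $H=\SL_2(\mathbb{F}_p)$, and via the determinant map one has $\GL_2(\mathbb{F}_p)/H\cong \mathbb{F}_p^{\times}\cong \mathbb{Z}/(p-1)\mathbb{Z}$. Feeding these identifications into the conclusion of Theorem~\ref{semidirectwithGL2} yields exactly the Galois realization of $(\SL_2(\mathbb{F}_p)\times\SL_2(\mathbb{F}_p))\rtimes \mathbb{Z}/(p-1)\mathbb{Z}$ over $\mathbb{Q}$.

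The only genuinely nontrivial ingredient is the appeal to \cite{arias2022locally}; once the pair $(L_1,L_2)$ is in hand, the argument is pure bookkeeping. In particular, the coset-representative condition (2) of Theorem~\ref{thm: compositum}, which is the technical hypothesis embedded in Theorem~\ref{semidirectwithGL2}, is supplied automatically by Corollary~\ref{cor : beautiful } from the coprimality $(p-1,1)=1$, so no separate construction of representatives inside $\GL_2(\mathbb{F}_p)$ is needed.
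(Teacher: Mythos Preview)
Your proof is correct and follows essentially the same approach as the paper: invoke \cite[Remark~4.4 and Corollary~4.3]{arias2022locally} to obtain the two $\GL_2(\mathbb{F}_p)$-extensions with intersection $\mathbb{Q}(\zeta_p)$, apply Theorem~\ref{semidirectwithGL2} with $n=p-1$, and use Corollary~\ref{cor: sl2} to identify $H=\SL_2(\mathbb{F}_p)$. Your write-up is simply more explicit about the arithmetic verifications and the quotient identification $\GL_2(\mathbb{F}_p)/H\cong\mathbb{Z}/(p-1)\mathbb{Z}$, which the paper leaves implicit.
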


\begin{proof}
 From \cite[Remark 4.4]{arias2022locally} and \cite[Corollary 4.3]{arias2022locally},  we get $E_1,E_2$ satisfying conditions of previous theorem with $n=p-1$, $K=\mathbb{Q}(\zeta_p)$ and $\GL_2(\mathbb{F}_p)/H$ is cyclic group of order $p-1$. Moreover from Corollary \ref{cor: sl2} (2), we have $H=SL_2(\mathbb{F}_p)$. 
\end{proof}\smallskip

From previous corollary and proof of Theorem \ref{thm: compositum} we have the following.

\begin{corollary}
    For a prime $p\geq 5$, $SL_2(\mathbb{F}_p)$ and $SL_2(\mathbb{F}_p)\times SL_2(\mathbb{F}_p)$ are realizable as Galois groups over $\mathbb{Q}(\zeta_p)$. 
\end{corollary}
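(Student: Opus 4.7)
The plan is to read off both realizations directly from the construction already carried out in Corollary~\ref{sl2 semi}, without rerunning Theorem~\ref{thm: compositum} from scratch. From \cite[Remark 4.4]{arias2022locally} and \cite[Corollary 4.3]{arias2022locally} I would fix two Galois extensions $E_1, E_2$ of $\mathbb{Q}$ with Galois group $\GL_2(\mathbb{F}_p)$ that are linearly disjoint over $\mathbb{Q}(\zeta_p)$; in particular $E_1 \cap E_2 = K = \mathbb{Q}(\zeta_p)$, which is a cyclic degree-$(p-1)$ extension of $\mathbb{Q}$.

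First I would handle the single factor. By Galois correspondence applied to $E_1/\mathbb{Q}$, the subgroup ${\rm Gal}(E_1/K)$ of ${\rm Gal}(E_1/\mathbb{Q}) \cong \GL_2(\mathbb{F}_p)$ has index $[K:\mathbb{Q}] = p-1$ and abelian quotient ${\rm Gal}(K/\mathbb{Q})$, so the uniqueness part of Corollary~\ref{cor: sl2}(2) forces it to coincide with $SL_2(\mathbb{F}_p)$. This realizes $SL_2(\mathbb{F}_p)$ as ${\rm Gal}(E_1/\mathbb{Q}(\zeta_p))$; the identical argument applied to $E_2$ gives the same conclusion for $E_2$.

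Next I would recover the direct product from the compositum. The base-case computation inside the proof of Theorem~\ref{thm: compositum} already establishes ${\rm Gal}(E_1 E_2 / K) \cong H_1 \times H_2$ under our hypotheses on $E_1, E_2, K$, and the previous paragraph identifies both $H_i$ with $SL_2(\mathbb{F}_p)$. Hence $SL_2(\mathbb{F}_p) \times SL_2(\mathbb{F}_p)$ is realized as ${\rm Gal}(E_1 E_2 / \mathbb{Q}(\zeta_p))$. No real obstacle appears; the only step worth double-checking is that the uniqueness in Corollary~\ref{cor: sl2}(2) is genuinely in force, which amounts to recording that $\GL_2(\mathbb{F}_p)/[\GL_2(\mathbb{F}_p),\GL_2(\mathbb{F}_p)] \cong \mathbb{F}_p^{\times}$ is cyclic of order $p-1$ — already noted in the discussion preceding Corollary~\ref{cor: unique}.
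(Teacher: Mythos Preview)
Your argument is correct and is exactly the unpacking the paper intends: the paper states this corollary with no proof beyond ``from previous corollary and proof of Theorem~\ref{thm: compositum}'', and your proposal spells out precisely those two ingredients --- identifying each $H_i$ with $SL_2(\mathbb{F}_p)$ via Corollary~\ref{cor: sl2}(2), and invoking the base-case isomorphism ${\rm Gal}(E_1E_2/K)\cong H_1\times H_2$ from the proof of Theorem~\ref{thm: compositum}.
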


 \begin{remark}\label{result_in_arias2022locally}
It is proved in \cite[Theorem 1.1]{arias2022locally} that for a prime $p \geq 5$, there are infinitely many locally cyclic Galois extensions
of $\mathbb{Q}$ with Galois group $\GL_2(\mathbb{F}_p)$, which are pairwise linearly disjoint over $\mathbb{Q}(\sqrt{p^*})$ where $p^*=(-1)^{(p-1)/2} p$.
  
 \end{remark}

\begin{corollary}
   For a prime $p\geq 5$ with $p\equiv 3 $ $mod$ $4$, let $H$ be the unique index-$2$ (hence normal) subgroup of $\GL_2(\mathbb{F}_p)$. Then  $(H\times H)\rtimes \GL_2(\mathbb{F}_p)/H$ (with semidirect product group law
as in above theorem) is realizable as Galois group over $\mathbb{Q}$.
\end{corollary}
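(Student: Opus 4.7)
The plan is to apply Theorem \ref{semidirectwithGL2} with $n = 2$ and $q = p$, so I will verify the two numbered hypotheses of that theorem in turn.

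For hypothesis (1), I need $p \equiv 1 \pmod 2$ (trivially true since $p$ is odd) together with $(2, (p-1)/2) = 1$. Here the assumption $p \equiv 3 \pmod 4$ is exactly what is required: it makes $(p-1)/2$ odd, so $\gcd(2, (p-1)/2) = 1$. Moreover, Corollary \ref{cor: unique} applied with $n = 2$ identifies the unique index-$2$ normal subgroup of $\GL_2(\mathbb{F}_p)$ with abelian (in fact cyclic) quotient as the kernel of the composition $\det$ followed by the squaring quotient on $\mathbb{F}_p^{\times}$; this is exactly the subgroup $H$ named in the statement.

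For hypothesis (2), I will invoke the result of Arias-de-Reyna and K\"onig recalled in Remark \ref{result_in_arias2022locally}: for any prime $p \geq 5$ there are infinitely many Galois extensions of $\mathbb{Q}$ with Galois group $\GL_2(\mathbb{F}_p)$ that are pairwise linearly disjoint over $\mathbb{Q}(\sqrt{p^{\ast}})$, where $p^{\ast} = (-1)^{(p-1)/2}p$. In the regime $p \equiv 3 \pmod 4$ we have $p^{\ast} = -p$, so $K := \mathbb{Q}(\sqrt{-p})$ is a (cyclic) degree-$2$ extension of $\mathbb{Q}$. Picking any two extensions $E_1, E_2$ from this infinite family, linear disjointness over $K$ implies $E_1 \cap E_2 = K$, which is precisely what hypothesis (2) of Theorem \ref{semidirectwithGL2} requires.

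With both hypotheses verified, Theorem \ref{semidirectwithGL2} applies directly and produces $(H \times H) \rtimes \GL_2(\mathbb{F}_p)/H$ as a Galois group over $\mathbb{Q}$, where the semidirect product law is the one constructed in the proof of Theorem \ref{thm: compositum}. Essentially all the work has already been packaged into the earlier results; the main (and really only) point to be careful about is the computation $p^{\ast} = -p$ when $p \equiv 3 \pmod 4$, which is what guarantees that the field $K$ over which the $E_i$ are linearly disjoint is genuinely a nontrivial abelian (quadratic) extension of $\mathbb{Q}$ and hence matches the $n = 2$ case of Theorem \ref{semidirectwithGL2}.
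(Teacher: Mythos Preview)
Your proof is correct and follows essentially the same route as the paper: both verify the two hypotheses of Theorem~\ref{semidirectwithGL2} with $n=2$, using the Arias-de-Reyna--K\"onig result (Remark~\ref{result_in_arias2022locally}) to produce $E_1,E_2$ linearly disjoint over $\mathbb{Q}(\sqrt{p^\ast})$ and the congruence $p\equiv 3 \pmod 4$ to ensure $(2,(p-1)/2)=1$. Your write-up is simply more explicit about the computation $p^\ast=-p$ and about why linear disjointness gives $E_1\cap E_2=K$.
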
 
\begin{proof}
     From \cite[Theorem 1.1]{arias2022locally}, we get $E_1,E_2$ satisfying conditions of previous theorem with $n=2$. Since $p\equiv 1$ $mod$ $2$, the conditions $p\equiv 3$ $mod$ $4$ and $(2,(p-1)/2)=1$ are equivalent. Hence we are done.
\end{proof}

\begin{remark}
    
 If $p\equiv 1 $ $ mod$ $4$ then $(2, (p-1)/2)=2$. Hence by Corollary \ref{cor : beautiful } there is no $x\in \GL_2(\mathbb{F}_p)$ such that $x\in H$ and $order(x)= 2$. 
\end{remark}

Now for $p\equiv 3$ $mod$ $4$ we have $p^* = -p$. From previous corollary and proof of Theorem \ref{thm: compositum} we have the following.

\begin{corollary}
  \label{cor: H cross H}
     For a prime $p\geq 5$ with $p\equiv 3$ $ mod$ $4$, let $H$ be the unique index-$2$ subgroup of $\GL_2(\mathbb{F}_p)$. Then $H$ and $H\times H$ are realizable as Galois groups over $\mathbb{Q}\sqrt{-p}$. 
\end{corollary}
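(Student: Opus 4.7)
The plan is to exploit the intermediate Galois group identifications that already appear inside the proof of Theorem~\ref{thm: compositum}, rather than redo any work from scratch. In the previous corollary, the extensions $E_1,E_2$ supplied by \cite[Theorem 1.1]{arias2022locally} have ${\rm Gal}(E_i/\mathbb{Q})\cong \GL_2(\mathbb{F}_p)$ and are linearly disjoint over $K=\mathbb{Q}(\sqrt{p^\ast})$; for $p\equiv 3\bmod 4$ we have $p^\ast = -p$, so $K=\mathbb{Q}(\sqrt{-p})$, and these $E_1,E_2$ are exactly the input to Theorem~\ref{thm: compositum}.

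First I would recall that since $p\equiv 3\bmod 4$, the group $\GL_2(\mathbb{F}_p)$ has a unique index-$2$ normal subgroup $H$ with abelian (in fact cyclic of order $2$) quotient, by Corollary~\ref{cor: unique}. Then by Galois correspondence applied to $\mathbb{Q}\subset K\subset E_i$, since ${\rm Gal}(K/\mathbb{Q})\cong \mathbb{Z}/2\mathbb{Z}$ is the abelian index-$2$ quotient of $\GL_2(\mathbb{F}_p)$, the subgroup ${\rm Gal}(E_i/K)$ must coincide with $H$. This immediately exhibits $H$ as a Galois group over $\mathbb{Q}(\sqrt{-p})$, giving the first assertion.

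For the second assertion, I would read off from within the proof of Theorem~\ref{thm: compositum} (base case $k=2$) that ${\rm Gal}(E_1E_2/K)\cong H_1\times H_2$. Specializing to our situation with $H_1=H_2=H$, this yields ${\rm Gal}(E_1E_2/\mathbb{Q}(\sqrt{-p}))\cong H\times H$, which is exactly the realization of $H\times H$ as a Galois group over $\mathbb{Q}(\sqrt{-p})$.

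There is essentially no obstacle here: the corollary is a bookkeeping consequence of the chain
\[
\mathbb{Q}\subset \mathbb{Q}(\sqrt{-p})\subset E_i\subset E_1E_2,
\]
combined with the fact that the proof of Theorem~\ref{thm: compositum} identifies both the top layer ${\rm Gal}(E_1E_2/K)$ and the intermediate layers ${\rm Gal}(E_i/K)$. The only nontrivial input needed beyond what is already present earlier in the paper is the observation that $p^\ast=-p$ for $p\equiv 3\bmod 4$, so that the common base field from \cite{arias2022locally} is literally $\mathbb{Q}(\sqrt{-p})$.
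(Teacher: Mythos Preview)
Your proposal is correct and follows essentially the same approach as the paper: the paper's entire argument is the sentence preceding the corollary, which observes $p^\ast=-p$ for $p\equiv 3\bmod 4$ and invokes the previous corollary together with the proof of Theorem~\ref{thm: compositum}. Your write-up simply unpacks that sentence, identifying ${\rm Gal}(E_i/K)=H$ via the uniqueness in Corollary~\ref{cor: unique} and reading off ${\rm Gal}(E_1E_2/K)\cong H\times H$ from the base case of Theorem~\ref{thm: compositum}.
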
\smallskip


 \begin{proposition}
   \label{prop : psl2}  

   For a prime $p\geq 5$,  $(PSL_2(\mathbb{F}_p)\times PSL_2(\mathbb{F}_p))\rtimes \mathbb{Z}/2\mathbb{Z}$ (with semidirect product group law as in Theorem \ref{thm: compositum}) is realizable as Galois group over $\mathbb{Q}$.
 \end{proposition}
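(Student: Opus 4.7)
The plan is to apply Theorem~\ref{thm: compositum} with $l=2$, $G_1=G_2=\PGL_2(\mathbb{F}_p)$, $H_1=H_2=\PSL_2(\mathbb{F}_p)$ and $K=\mathbb{Q}(\sqrt{p^\ast})$, where $p^\ast=(-1)^{(p-1)/2}p$. To do so I must (i) produce two $\PGL_2(\mathbb{F}_p)$-Galois extensions $E_1,E_2$ of $\mathbb{Q}$ whose intersection is exactly $\mathbb{Q}(\sqrt{p^\ast})$, and (ii) produce a multiplicatively closed set of representatives for $\PSL_2(\mathbb{F}_p)$-cosets in $\PGL_2(\mathbb{F}_p)$.

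For (i), I would invoke \cite[Thm.~1.1]{arias2022locally} (recalled in Remark~\ref{result_in_arias2022locally}) to fix two $\GL_2(\mathbb{F}_p)$-Galois extensions $L_1,L_2/\mathbb{Q}$ that are linearly disjoint over $\mathbb{Q}(\sqrt{p^\ast})$. Letting $Z_i\triangleleft {\rm Gal}(L_i/\mathbb{Q})$ correspond to the center (scalar matrices) of $\GL_2(\mathbb{F}_p)$, set $E_i:=L_i^{Z_i}$, so that ${\rm Gal}(E_i/\mathbb{Q})\cong \GL_2(\mathbb{F}_p)/Z\cong \PGL_2(\mathbb{F}_p)$. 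Under the Galois correspondence, the fixed field of $\PSL_2(\mathbb{F}_p)\subset \PGL_2(\mathbb{F}_p)$ inside $E_i$ pulls back to $L_i^{\SL_2\cdot Z}$; since $\SL_2\cdot Z$ is the index-$2$ subgroup of $\GL_2(\mathbb{F}_p)$ consisting of matrices with square determinant, this fixed field is the unique quadratic subfield of $L_i^{\SL_2}=\mathbb{Q}(\zeta_p)$, namely $\mathbb{Q}(\sqrt{p^\ast})$. Thus $\mathbb{Q}(\sqrt{p^\ast})\subseteq E_1\cap E_2$, and by linear disjointness $E_1\cap E_2\subseteq L_1\cap L_2 = \mathbb{Q}(\sqrt{p^\ast})$, giving equality.

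For (ii), by Remark~\ref{remark proof} it suffices to produce $x\in \PGL_2(\mathbb{F}_p)$ with $x^2=1$ and $x\notin \PSL_2(\mathbb{F}_p)$, i.e.\ to right-split $1\to \PSL_2(\mathbb{F}_p)\to \PGL_2(\mathbb{F}_p)\to \mathbb{Z}/2\mathbb{Z}\to 1$. Fixing any non-square $d\in \mathbb{F}_p^{\times}$, the class of $M=\begin{pmatrix} 0 & 1 \\ -d & 0 \end{pmatrix}$ does the job, since $M^2=-dI$ is scalar (so $[M]^2=1$ in $\PGL_2(\mathbb{F}_p)$) while $\det M=d$ is a non-square (so $[M]\notin \PSL_2(\mathbb{F}_p)$). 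Then Theorem~\ref{thm: compositum} applied to $E_1,E_2$ yields ${\rm Gal}(E_1E_2/\mathbb{Q})\cong (\PSL_2(\mathbb{F}_p)\times \PSL_2(\mathbb{F}_p))\rtimes \mathbb{Z}/2\mathbb{Z}$ with the semidirect-product law from that theorem, as required. The main technical obstacle is the intersection identity in~(i), which relies essentially on the linear-disjointness result of \cite{arias2022locally}; step~(ii) reduces to the classical splitting $\PGL_2(\mathbb{F}_p)\cong \PSL_2(\mathbb{F}_p)\rtimes \mathbb{Z}/2\mathbb{Z}$ and causes no difficulty.
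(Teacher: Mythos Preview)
Your proposal is correct and follows essentially the same route as the paper: both apply Theorem~\ref{thm: compositum} with $G_i=\PGL_2(\mathbb{F}_p)$, $H_i=\PSL_2(\mathbb{F}_p)$, $K=\mathbb{Q}(\sqrt{p^\ast})$, and exhibit an explicit order-$2$ element of $\PGL_2(\mathbb{F}_p)\setminus\PSL_2(\mathbb{F}_p)$ to split the sequence. The only cosmetic difference is that the paper invokes directly that \cite[Theorem~1.1]{arias2022locally} also holds for $\PGL_2(\mathbb{F}_p)$, whereas you build the $\PGL_2$-extensions as the center-fixed subfields $E_i=L_i^{Z}$ of the $\GL_2$-extensions; note that your intermediate claim $L_i^{\SL_2}=\mathbb{Q}(\zeta_p)$ is not actually needed, since uniqueness of the index-$2$ subgroup of $\GL_2(\mathbb{F}_p)$ already forces $L_i^{\SL_2\cdot Z}=\mathbb{Q}(\sqrt{p^\ast})$.
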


\begin{proof}
Consider $H=\{x\in \GL_2(\mathbb{F}_p) \ |\  det(x)$ is a square in $\mathbb{F}_p^{\times}\}$, which is the unique index-$2$ subgroup of $\GL_2(\mathbb{F}_q)$ by Corollary \ref{cor: unique}. It can be seen that $H=Z(\GL_2(\mathbb{F}_p))SL_2(\mathbb{F}_p)$. Hence it follows that $ PH \cong PSL_2(\mathbb{F}_p)$. Since $H$ is the unique index-$2$ subgroup of $\GL_2(\mathbb{F}_p)$ containing $Z(\GL_2(\mathbb{F}_p))$, we have that $PH\cong PSL_2(\mathbb{F}_p)$ is the unique index-$2$ subgroup of $\PGL_2(\mathbb{F}_p)$.\smallskip

 We have $p\geq 5$. Let $r\neq p-1$ be a non-square element in $\mathbb{F}_p $. Consider $x=\begin{pmatrix}
1 & r+1\\
-1 & -1
\end{pmatrix}\not \in H$ with $x^2=1 \ (Z(\GL_2(\mathbb{F}_p)))$.\smallskip

From \cite{arias2022locally}, we have that \cite[Theorem 1.1]{arias2022locally} is also true for $\PGL_2(\mathbb{F}_p)$. Thus we get $E_1,E_2$ satisfying conditions of Theorem \ref{thm: compositum} for $l=2$ with $G_1=G_2=\PGL_2(\mathbb{F}_p)$, and $[G_1:H_1] =2$. Therefore $H_1=H_2=PSL_2(\mathbb{F}_p)$. We observe that $\{1, xZ(\GL_2(\mathbb{F}_p))\}$ is required set of representatives of $PSL_2(\mathbb{F}_p)$-coset in $\PGL_2(\mathbb{F}_p)$. 
\end{proof}\smallskip


\begin{corollary}
    For a prime $p\geq 5$, $PSL_2(\mathbb{F}_p)$ and $PSL_2(\mathbb{F}_p)\times PSL_2(\mathbb{F}_p)$ are realizable as Galois groups over $\mathbb{Q}\sqrt{p^{\ast}}$.
\end{corollary}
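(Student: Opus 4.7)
The plan is to extract both realizations directly from the construction used in the proof of Proposition~\ref{prop : psl2}. In that proof, we produced two Galois extensions $E_1, E_2 \subset \bar{\mathbb{Q}}$ of $\mathbb{Q}$, each with Galois group $\PGL_2(\mathbb{F}_p)$, satisfying $E_1 \cap E_2 = K$ with $[K:\mathbb{Q}]=2$, by invoking the $\PGL_2(\mathbb{F}_p)$-analog of \cite[Theorem 1.1]{arias2022locally}. That same result pins down $K$: the linear disjointness there is stated over $\mathbb{Q}(\sqrt{p^*})$, so $K = \mathbb{Q}(\sqrt{p^*})$.

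For the first claim, I would simply apply the Galois correspondence to $E_1/\mathbb{Q}$. Since $PSL_2(\mathbb{F}_p)$ is the unique index-$2$ subgroup of $\PGL_2(\mathbb{F}_p)$ (as observed in the proof of Proposition~\ref{prop : psl2}, using $Z(\GL_2(\mathbb{F}_p))\,SL_2(\mathbb{F}_p)$), and since $K$ corresponds to the unique index-$2$ subextension, we get ${\rm Gal}(E_1/K) \cong PSL_2(\mathbb{F}_p)$. Thus $PSL_2(\mathbb{F}_p)$ is realized as a Galois group over $\mathbb{Q}(\sqrt{p^*})$.

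For the second claim, I would pass to the compositum. Both $E_1$ and $E_2$ are Galois over $\mathbb{Q}$, hence Galois over $K$, with ${\rm Gal}(E_i/K) \cong PSL_2(\mathbb{F}_p)$ for $i=1,2$. The condition $E_1 \cap E_2 = K$, combined with the standard Galois correspondence for compositums (cf.\ \cite[Theorem 1.1]{conrad2023galois}, \cite[Theorem 2.6]{conrad2023galois}), gives the isomorphism
\[
{\rm Gal}(E_1 E_2 / K) \;\cong\; {\rm Gal}(E_1/K) \times {\rm Gal}(E_2/K) \;\cong\; PSL_2(\mathbb{F}_p) \times PSL_2(\mathbb{F}_p),
\]
which realizes the direct product over $\mathbb{Q}(\sqrt{p^*})$. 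Note that this step was also implicit in the induction in the proof of Theorem~\ref{thm: compositum} (the base case, restricted to the ground field $K$), so no new ingredient is needed.

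The only potential obstacle is making sure the quadratic subfield produced by the $\PGL_2(\mathbb{F}_p)$-version of \cite[Theorem 1.1]{arias2022locally} is indeed $\mathbb{Q}(\sqrt{p^*})$. This is exactly what the referenced theorem asserts (the linear disjointness is stated over $\mathbb{Q}(\sqrt{p^*})$), so once this identification is invoked, the corollary follows immediately with no further computation.
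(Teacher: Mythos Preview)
Your proposal is correct and follows essentially the same approach as the paper: the corollary is stated without proof in the paper, but it is understood to follow from Proposition~\ref{prop : psl2} together with the intermediate identifications ${\rm Gal}(E_i/K)\cong H_i$ and ${\rm Gal}(E_1E_2/K)\cong H_1\times H_2$ established in the proof of Theorem~\ref{thm: compositum}, exactly as you spell out. Your extra care in confirming that $K=\mathbb{Q}(\sqrt{p^*})$ via the $\PGL_2$-version of \cite[Theorem~1.1]{arias2022locally} is a useful clarification that the paper leaves implicit.
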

\smallskip

 \section{Galois Representations, Right Splitting and Inverse Galois Problem}
\label{Gal-rep}

In this section, by using the algebraic operations induction, direct sums and tensor products on Galois representations and right splitting of some exact sequences of groups, we establish occurrence of some groups as Galois groups over $\Q$.

\smallskip

 \subsection{Induced Galois representations and Galois groups}

\begin{definition}

Let $G={\rm Gal}(\bar{\mathbb{Q}}/\mathbb{Q})$. Let $E$ be a number field and $\Lambda$ be a prime in its ring of integers $\mathcal{O}$. Let $E_{\Lambda}$ be the completion of $E$ with respect to $\Lambda$ and $\mathcal{O}_{\Lambda}$ be its ring of integers. Let $\mathbb{F}_{\Lambda}=\mathcal{O}_{\Lambda}/\Lambda \mathcal{O}_{\Lambda}$. \smallskip
We call $(\pi,V)$ a Galois representation if $V$ is a finite dimensional vector space over $E_{\Lambda}$ and $\pi:G\rightarrow \GL(V)$ is a continuous homomorphism.

\end{definition}


\begin{remark}

     Let $dim_{E_{\Lambda}}(V)=m$. Now from \cite[Proposition 9.3.5]{diamond2005first}, there is a basis of $V$ such that $\pi(G)\subset \GL_m(\mathcal{O}_{\Lambda})$ for all $g\in G$. Let us consider the maps $\pi' : G\rightarrow \GL_m(\mathbb{F}_{\Lambda})$ and $\tilde{\pi} :G\rightarrow \PGL_m(\mathbb{F}_{\Lambda})$ obtained from $\pi$ through the quotients maps $\mathcal{O}_{\Lambda}\rightarrow \mathbb{F}_{\Lambda}$ and $\GL_m(\mathbb{F}_{\Lambda})\rightarrow \PGL_m(\mathbb{F}_{\Lambda})$.

\end{remark}

\begin{definition}
     A finite group $C$ is said to be realizable as Galois group over $\mathbb{Q}$ through a Galois Representation $(\pi,V)$ if $C\cong {\rm Image}(\tilde{\pi})$.\smallskip

     We have a similar definition for $\pi'$ in place of $\tilde{\pi}$.
\end{definition}

     \medskip

  Let $K$ be a finite extension of $\mathbb{Q}$ contained in $\bar{\mathbb{Q}}$. Then $\bar{K}=\bar{\mathbb{Q}}$. Let $H={\rm Gal}(\bar{K}/K)\subset G$. Let $(\pi, W)$ be a Galois representation with $dim_{E_{\Lambda}}(W)=m$.  \smallskip

Let $\sigma=\pi|_H : H\rightarrow \GL(W)$. Then $\sigma' :H\rightarrow \GL_m ( \mathbb{F}_{\Lambda})$ and $\tilde{\sigma} :H\rightarrow \PGL_m (\mathbb{F}_{\Lambda})$ and ${\rm Image}(\tilde{\sigma})=\tilde{\pi}(H)$. Also, $[\tilde{\pi}(G):\tilde{\pi}(H)]\leq [G:H]$.\smallskip

The following discussion is valid for a general context of a finite dimensional representation of a finite index subgroup $H$ of a group $G$. In this manuscript, we only consider the case of ${\rm Gal}(\bar{\mathbb{Q}}/\mathbb{Q})$ and ${\rm Gal}(\bar{K}/K)$.\smallskip

 Let us consider the induced representation $\rho=Ind_H^G (\sigma)$ on the induced space over $E_{\Lambda}$, defined by  $$V=\{f:G\rightarrow W\ |\  f(hg)=\sigma (h)f(g)\  for\ all\ h\in H, g\in G\}$$ 
 and $\rho: G\rightarrow \GL(V)$ is given by $\rho(g)f(x)=f(xg), \text{for all}\ g,x\in G, f\in V$. Thus $(\rho, V)$ is a Galois representation.  \smallskip

Let $\{s_i\}_{1\leq i \leq n}$ be a set of representatives of right cosets in $H\backslash G$ and $\{w_j\}_{1\leq j \leq m}$ be a basis of $W$. Then we know that $\{\phi_{s_i,w_j}\}_{1\leq i \leq n,1\leq j\leq m}$ is a basis for $V$ where $$\phi_{s,w} (g)=  \begin{cases}
      \sigma(gs^{-1})w, & \text{if}\ gs^{-1}\in H \\
      0 & \text{otherwise.}
    \end{cases}$$ and $dim(V)=dim(W)[G:H]$.\smallskip

Let $H$ be a normal subgroup of $G$ (i.e., $K$ is Galois over $\Q$) such that $[G:H]=n $. Since $dim(W)=m$, we have $dim(V)=mn$. Without loss of generality, let $s_1=1$. We label the basis of $V$ as 
\[
\begin{split} 
f_1=\phi_{1,w_1},\ f_2=\phi_{1,w_2},\ \dots, \ f_m=\phi_{1,w_m}, \\
 f_{m+1}=\phi_{s_2,w_1},\ \dots,\ f_{2m}=\phi_{s_2,w_m},\dots, \\
 f_{(n-1)m+1}=\phi_{s_n,w_1},\ \dots,\ f_{nm}=\phi_{s_n,w_m}.
\end{split}
\] 
    
 We write down the matrix of $\rho(g)$ with respect to the above basis.  Observe that 
 $$\rho(g)f_i(x)= f_i(xg)= \sum \limits_{j=1}^{mn} a_{ji} f_j(x)$$
 
  By taking $x=s_k$, we get that  \smallskip
    
    $$\rho(g)=
\begin{bmatrix}
(f_1(g)) & (f_2(g)) & \dots & (f_{nm}(g)) \\
(f_1(s_2 g)) & (f_2(s_2 g)) &  \dots & (f_{nm}(s_2 g)) \\
\dots &   \dots  &   \dots  &   \dots \\
(f_1(s_n g)) & (f_2(s_n g)) & \dots & (f_{nm}(s_n g)) 
\end{bmatrix}_{nm\times nm}$$ where $(f_i(s_k g))$ are treated as $m\times 1$ 
column matrices (matrices with respect to given basis $\{w_j: 1 \leq j \leq m \}$ of $W$). \smallskip

The $nm\times nm$ matrix for $\rho (hs_i)$:  The $p,q$-th $m\times m$ block where $1\leq p,q \leq n$ and $q$ is such that $s_ps_i\in s_qH$ is $\pi (s_phs_is_q^{-1})$, since $\phi_{s_q,w}(s_phs_i)=\pi(s_ph s_is_q^{-1})w$, since $s_phs_i\in s_qH$.  \smallskip

Since $\pi(G)\subset \GL_m(\mathcal{O}_{\Lambda})$, we conclude that $\rho (G)\subset \GL_{nm} (\mathcal{O}_{\lambda})$ and $\rho':G\rightarrow \GL_{nm}(\mathbb{F}_{\Lambda})$ and $\tilde{\rho}:G\rightarrow \PGL_{nm}(\mathbb{F}_{\Lambda})$ can be defined.

    \begin{remark}
        All the following results for the pair ($\rho$, $\pi$) in this subsection also hold for pairs ($\rho'$, $\pi'$) as well as ($\tilde{\rho}$, $\tilde{\pi}$).
    \end{remark}

\begin{lemma}
   Images $\rho(H)$ and $\pi(H)$ are isomorphic.
   
\end{lemma}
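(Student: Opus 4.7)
The plan is to compute $\rho(h)$ explicitly for $h \in H$ using the block-matrix description given in the paragraph just above the lemma, then exploit the normality of $H$ (and, crucially, of $\ker \pi$) in $G$ to identify $\ker(\rho|_H)$ with $\ker(\pi|_H)$, and finally invoke the first isomorphism theorem.

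First I would specialize the displayed block formula to elements of $H$. For $h\in H$, writing $\rho(h) = \rho(h \cdot s_1)$ with $s_1 = 1$, the $(p,q)$-th $m \times m$ block is $\pi(s_p h s_p^{-1})$ whenever $s_p \cdot 1 \in s_q H$, i.e.\ $p = q$ (since the $s_i$ lie in distinct cosets), and is zero otherwise. Since $H$ is normal in $G$, each conjugate $s_p h s_p^{-1}$ lies in $H$, so the diagonal entries are legitimate values of $\sigma = \pi|_H$. Thus $\rho(h)$ is block-diagonal with diagonal blocks $\pi(h),\ \pi(s_2 h s_2^{-1}),\ \dots,\ \pi(s_n h s_n^{-1})$.

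Next I would compute $\ker(\rho|_H)$. Because the first diagonal block of $\rho(h)$ is $\pi(s_1 h s_1^{-1}) = \pi(h)$, any $h$ in $\ker(\rho|_H)$ automatically satisfies $\pi(h) = I$, giving $\ker(\rho|_H) \subseteq \ker(\pi|_H)$. For the reverse inclusion, the key observation is that $\ker \pi$ is a normal subgroup of the whole group $G$, not merely of $H$; consequently, if $\pi(h) = I$ then $\pi(s_p h s_p^{-1}) = I$ for every $p$, so every diagonal block of $\rho(h)$ is the identity and $h \in \ker(\rho|_H)$. Hence $\ker(\rho|_H) = \ker(\pi|_H)$. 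The first isomorphism theorem applied to both restrictions now yields $\rho(H) \cong H/\ker(\pi|_H) \cong \pi(H)$.

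There is no real obstacle here; the only subtle point is recognizing that normality of $\ker \pi$ in the ambient group $G$ (and not just in $H$) is exactly what lets one push $\pi$-triviality of $h$ to all $G$-conjugates $s_p h s_p^{-1}$ simultaneously. The argument transfers verbatim to the pairs $(\rho', \pi')$ and $(\tilde\rho, \tilde\pi)$ that the subsequent remark mentions, because $\rho'$ and $\tilde\rho$ are obtained from $\rho$ by post-composition with the group homomorphisms $\mathrm{GL}_{nm}(\mathcal{O}_\Lambda) \to \mathrm{GL}_{nm}(\mathbb{F}_\Lambda) \to \mathrm{PGL}_{nm}(\mathbb{F}_\Lambda)$, which send block-diagonal matrices to block-diagonal matrices and preserve the kernel calculation.
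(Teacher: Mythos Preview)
Your proof is correct and follows essentially the same approach as the paper: both begin by specializing the general block formula to $h\in H$ and obtaining the block-diagonal matrix $\rho(h)=\mathrm{diag}\bigl(\pi(h),\pi(s_2hs_2^{-1}),\dots,\pi(s_nhs_n^{-1})\bigr)$. The paper then simply defines the first-block projection $\rho(h)\mapsto\pi(h)$ and declares it an isomorphism, whereas you instead show $\ker(\rho|_H)=\ker(\pi|_H)$ and invoke the first isomorphism theorem; these are two packagings of the same computation, and your version makes explicit the one nontrivial point the paper leaves to the reader, namely that injectivity of the first-block projection (equivalently, the inclusion $\ker(\pi|_H)\subseteq\ker(\rho|_H)$) requires $\ker\pi\trianglelefteq G$ so that $\pi(h)=I$ forces $\pi(s_phs_p^{-1})=I$ for all $p$.
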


\begin{proof}

 If $g=h\in H$, then $$\rho(h)=
\begin{bmatrix}
\pi(h) & 0 & \dots &   \\
0 & \pi(s_2hs_2^{-1}) & & \\
\vdots & & \ddots & \\ 
 & & & \pi(s_n hs_n^{-1})\\
\end{bmatrix}_{nm\times nm}.$$  \smallskip

    Define a map from $\rho (H)$ to $\pi (H)$ by first block projection sending $\rho(h)$ to $\pi (h)$. 
\end{proof}

\begin{lemma} \label{lemma-exact}
We get an exact sequence $$1\rightarrow \rho(H)\rightarrow \rho(G) \rightarrow G/H \rightarrow 1.$$
\end{lemma}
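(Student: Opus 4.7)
The goal is to identify the quotient $\rho(G)/\rho(H)$ with $G/H$. The natural candidate for the connecting map is
\[
\psi:\rho(G)\longrightarrow G/H,\qquad \rho(g)\longmapsto gH,
\]
and essentially the whole proof reduces to showing that this is well defined, i.e.\ that $\ker(\rho)\subseteq H$.

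My first step is to exploit the explicit block form of $\rho$ worked out just before the lemma. For $g\in G$ and coset representatives $s_1=1,s_2,\dots,s_n$ of $G/H$, the $(p,q)$-th $m\times m$ block of $\rho(g)$ is $\pi(s_p g s_q^{-1})$ when $s_pg\in s_qH$ and is $0$ otherwise. Thus the \emph{positions} of the nonzero blocks of $\rho(g)$ are determined by a permutation of $\{1,\dots,n\}$, namely $p\mapsto q(p)$ where $q(p)$ is the unique index with $s_p g\in s_{q(p)}H$. In particular, taking $p=1$, one reads off the coset $gH=s_{q(1)}H$ directly from the block-support pattern of $\rho(g)$.

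Using this observation, I would carry out the proof in four short steps:

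\textbf{Step 1 (Kernel containment).} If $\rho(g)=I$, then $\rho(g)$ is block diagonal, so the permutation $p\mapsto q(p)$ is the identity; in particular $q(1)=1$, which means $g\in s_1H=H$. Hence $\ker(\rho)\subseteq H$.

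\textbf{Step 2 (Well-definedness of $\psi$).} Since $\ker(\rho)\subseteq H=\ker(G\twoheadrightarrow G/H)$, the map $g\mapsto gH$ factors through $\rho(G)$, giving a well-defined group homomorphism $\psi:\rho(G)\to G/H$ with $\psi(\rho(g))=gH$.

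\textbf{Step 3 (Surjectivity).} Surjectivity of $\psi$ is immediate because $g\mapsto gH$ is surjective.

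\textbf{Step 4 (Kernel of $\psi$).} We have $\psi(\rho(g))=H$ iff $g\in H$ iff $\rho(g)\in\rho(H)$. The nontrivial inclusion ``$\psi(\rho(g))=H \Rightarrow \rho(g)\in\rho(H)$'' again uses $\ker(\rho)\subseteq H$: if $gH=H$ then $g\in H$, so $\rho(g)\in\rho(H)$. Thus $\ker(\psi)=\rho(H)$, and in particular $\rho(H)$ is normal in $\rho(G)$.

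Combining these four steps yields the exact sequence. The only nontrivial point is Step 1, since a priori the representation $\rho$ need not be faithful (if $\pi$ itself is not faithful on $H$), but the block-permutation structure of induced matrices guarantees that the failure of faithfulness stays inside $H$. Everything else is bookkeeping with the first isomorphism theorem applied to $\psi$.
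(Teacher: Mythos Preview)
Your proof is correct and follows essentially the same approach as the paper. The paper's argument shows that the sets $\rho(Hs_i)$ are pairwise disjoint (using the block structure of the first row of $\rho(hs_j)$), which is exactly equivalent to your Step~1 observation that $\ker(\rho)\subseteq H$; from there both you and the paper define the same map $\rho(hs_k)\mapsto s_kH$ and read off surjectivity and the kernel.
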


\begin{proof}

If $g=hs_j \in Hs_j$,  
$(f_{(j-1)m+1}(g))=\pi(h)w_1, (f_{(j-1)m+2}(g))=\pi(h)w_2,\dots ,(f_{jm}(g))=\pi(h)w_m$ and for other $k$, $(f_{k}(g))=0$. We have
    $$\rho(G)=\rho(H)\bigsqcup  \rho(Hs_2) \bigsqcup \dots \bigsqcup \rho(Hs_n)=\rho(H)\bigsqcup \rho(H)\rho(s_2) \bigsqcup \dots \bigsqcup \rho(H)\rho(s_n)$$ hence $[\rho(G): \rho(H)]=n$.  \smallskip

Now, since $H$ is normal in $G$, its left and right cosets coincide. Hence we can define a map $\gamma : \rho(G) \rightarrow G/H $ with $\gamma (\rho(hs_k))=s_kH$ for any $h,s_k$. Now $\gamma$ is well defined surjective homomorphism because $\rho(Hs_i)$ for $1\leq i\leq n$ are disjoint. The kernel of $\gamma$ is precisely $\rho (H)$.
\end{proof}

\begin{remark}
    We do not necessarily get a similar exact sequence involving $\pi(H)$, $\pi(G)$ and $G/H$ a similar well defined surjective map from $\pi(G)$ to $G/H$ may not exist since $\pi(Hs_i)$ need not be disjoint. 
    
\end{remark}

\begin{example}
Let $G/H$ be cyclic with representatives of $H$-cosets in $G$ of the form $\{1,s,s^2,\dots , s^{n-1}\}$. 
If $g=hs^i\in H$ for $0\leq i \leq n-1$, the matrix $\rho(hs^i)$ is given by  \smallskip

$$\begin{bmatrix}
0 &\dots & & 0 & \pi(h) & 0 & \dots &  \\
\vdots &  & & & 0 & \pi(shs^{-1}) & 0 & \dots\\
 & \ddots &  & & \vdots & & \ddots  & \\ 
 & & & & & & & \pi(s^{n-1-i} hs^{-(n-1-i)})\\
\pi(s^{n-i}hs^i) & & & & & & & 0 \\
0 & \pi(s^{n-(i-1)}hs^{(i-1)}) & & & & & & \vdots \\
\vdots & 0 & \ddots & & & & &  \\
 & & 0 & \pi(s^{n-1}hs) & & & &  \\
\end{bmatrix}.$$   \smallskip

In particular,  \smallskip
$$\rho(s)=
\begin{bmatrix}
0 & I_2 & 0 & \dots &  \\
0 & 0 & I_2 & 0 & \dots\\
\vdots & \vdots & & \ddots  & \\ 
 & & & & I_2 \\
\pi(s^{n}) & & & & 0 \\
\end{bmatrix}.$$
\end{example}
\smallskip

By computing and comparing the matrices $\rho(s_i),\rho(s_j)$, $\rho(s_k)$ and $\rho(s_i)\rho(s_j)$, we get the following.

\begin{lemma}
    Suppose we have a set of representatives of $H$-cosets in $G$, $\{s_1,s_2,\dots , s_n\}$ with $s_1\in H$, then $\rho(s_i)\rho(s_j)=\rho(s_k)
\iff
(\pi(s_i)\pi(s_j)=\pi(s_k)$ and $s_is_j\in s_kH$).
\end{lemma}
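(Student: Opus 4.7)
\emph{Plan.} The approach is to exploit the explicit block description of $\rho$ spelled out just above the lemma: for $g = h s_i$, the $(p,q)$-th $m \times m$ block of $\rho(g)$ is $\pi(s_p h s_i s_q^{-1})$ when $s_p s_i \in s_q H$, and is $0$ otherwise. Specialising to $h = 1$ makes $\rho(s_i)$ a block ``generalized permutation matrix'', with exactly one nonzero block in each block-row $p$, sitting in column $q(p)$ defined by $s_p s_i \in s_{q(p)}H$ and of value $\pi(s_p s_i s_{q(p)}^{-1})$. First I would multiply $\rho(s_i)\rho(s_j)$ blockwise; using normality of $H$ (so that $s_p s_i s_j$ has a well-defined coset) and the homomorphism property of $\pi$, this product again has exactly one nonzero block per block-row, in column $r(p)$ characterised by $s_p s_i s_j \in s_{r(p)}H$, of value $\pi(s_p s_i s_j s_{r(p)}^{-1})$. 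Hence both sides of the claimed identity already have the same \emph{shape} of nonzero blocks, and the equality reduces to matching positions and values row by row.

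For the forward implication, I would evaluate on row $p = 1$. Using $s_1 \in H$ and normality, the unique nonzero block of $\rho(s_i)\rho(s_j)$ in row $1$ sits in column $r(1)$ with $s_i s_j \in s_{r(1)}H$, whereas that of $\rho(s_k)$ sits in column $k$; matching positions forces $s_is_j \in s_kH$, and matching the block values then gives $\pi(s_1 s_i s_j s_k^{-1}) = \pi(s_1)$, i.e., $\pi(s_i)\pi(s_j) = \pi(s_k)$. For the converse, $s_is_j \in s_kH$ makes $s_p s_i s_j$ and $s_p s_k$ lie in the same $H$-coset for every $p$, so the nonzero-block positions of $\rho(s_i)\rho(s_j)$ and $\rho(s_k)$ agree in every row; and the block values agree by the homomorphism property applied to the hypothesis $\pi(s_i)\pi(s_j) = \pi(s_k)$:
\[
\pi(s_p s_i s_j s_{r(p)}^{-1}) = \pi(s_p)\pi(s_i)\pi(s_j)\pi(s_{r(p)})^{-1} = \pi(s_p)\pi(s_k)\pi(s_{r(p)})^{-1} = \pi(s_p s_k s_{r(p)}^{-1}).
\]

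The only real difficulty is keeping the induced permutation $p \mapsto r(p)$ on block-rows straight while multiplying generalized permutation matrices, but normality of $H$ makes the assignment depend only on the coset of $s_is_j$, which is what allows the value comparison to collapse cleanly via the homomorphism property of $\pi$. A slicker conceptual shortcut is to first observe $\ker(\rho) = H \cap \ker(\pi)$ — the apparent $p$-dependence in ``$s_p x s_p^{-1} \in \ker(\pi)$'' evaporates because $\ker(\pi)$ is normal in $G$ — and then read the lemma off as the single membership $s_i s_j s_k^{-1} \in H \cap \ker(\pi)$, which simultaneously encodes both of the required conditions.
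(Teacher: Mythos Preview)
Your proof is correct and follows exactly the approach the paper indicates: the paper's entire argument is the single sentence ``By computing and comparing the matrices $\rho(s_i),\rho(s_j)$, $\rho(s_k)$ and $\rho(s_i)\rho(s_j)$, we get the following,'' and you have carried out precisely that block-matrix comparison in detail. Your closing observation that $\ker(\rho)=H\cap\ker(\pi)$ reduces the lemma to the single membership $s_is_js_k^{-1}\in H\cap\ker(\pi)$ is a tidy shortcut not mentioned in the paper.
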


\begin{remark}
    $\pi(s_i)$ need not be distinct even though $\rho(s_i)$ are distinct.
    In fact even if all $\pi(s_i)$ are same, $\rho(s_i)$ will be distinct.
\end{remark}

\begin{corollary}
Let $G/H$ be cyclic with representatives of $H$-cosets in $G$ of the form $\{1,s,s^2,\dots , s^{n-1}\}$. Then $\rho(s)^n=1$ if and only if $\pi(s)^n=1$.    
\end{corollary}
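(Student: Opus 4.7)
The plan is to reduce the identity $\rho(s)^n = 1$ to a statement about $\rho(s^n)$, and then to invoke the block-diagonal form of $\rho|_H$ already recorded for $h \in H$.

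First, since $\{1, s, s^2, \ldots, s^{n-1}\}$ is a set of coset representatives for $H$ in $G$ with $|G/H| = n$, the element $s^n$ lies in $H$. I would then specialize the block-diagonal formula
$$\rho(h) = \mathrm{diag}\bigl(\pi(h),\, \pi(s_2 h s_2^{-1}),\, \ldots,\, \pi(s_n h s_n^{-1})\bigr),$$
valid for $h \in H$, to the coset representatives $s_i = s^{i-1}$ and to the element $h = s^n$. Because all powers of $s$ commute with one another, every conjugate $s^{i-1} \cdot s^n \cdot s^{-(i-1)}$ equals $s^n$, which collapses the formula to
$$\rho(s^n) = \mathrm{diag}\bigl(\pi(s^n), \pi(s^n), \ldots, \pi(s^n)\bigr).$$

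Next, since $\rho$ is a group homomorphism, $\rho(s)^n = \rho(s^n)$, and since $\pi$ is also a homomorphism, $\pi(s^n) = \pi(s)^n$. Combining these with the previous display yields
$$\rho(s)^n = \mathrm{diag}\bigl(\pi(s)^n, \pi(s)^n, \ldots, \pi(s)^n\bigr),$$
from which the equivalence $\rho(s)^n = 1 \iff \pi(s)^n = 1$ is immediate (both conditions being equivalent to $\pi(s^n)$ equalling the identity of $\GL_m(E_\Lambda)$).

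As an alternative verification, one could multiply the explicit cyclic-shift matrix for $\rho(s)$ displayed in the preceding example directly: an $n$-fold application cycles each block $B_k$ through the sequence $B_k \to B_{k-1} \to \cdots \to B_1 \to B_n \to \cdots \to B_k$, picking up exactly one factor of $\pi(s^n)$ in the transition $B_1 \to B_n$, and leading to the same block-diagonal expression. There is no substantial obstacle here — the entire argument is driven by the observation that $s^n$ lies in the abelian subgroup $\langle s \rangle$, so that conjugation of $s^n$ by any power of $s$ is trivial.
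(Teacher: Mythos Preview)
Your argument is correct. You take a slightly different route from the paper: the paper states this corollary immediately after the lemma asserting that $\rho(s_i)\rho(s_j)=\rho(s_k)$ iff $\pi(s_i)\pi(s_j)=\pi(s_k)$ and $s_is_j\in s_kH$, and the intended deduction is to apply that lemma with $s_i=s^{n-1}$, $s_j=s$, $s_k=1$ (the coset condition $s^n\in H$ being automatic). Your approach instead bypasses the lemma entirely and appeals directly to the earlier block-diagonal description of $\rho|_H$, using the pleasant observation that $s^n$ commutes with every coset representative $s^{i-1}$, so all diagonal blocks collapse to $\pi(s^n)$. Both arguments are short; yours is arguably more self-contained since it needs only the formula for $\rho(h)$ with $h\in H$ rather than the comparison of general products $\rho(s_i)\rho(s_j)$, while the paper's route has the advantage of situating the corollary as a special case of a more general multiplicativity criterion that is reused elsewhere.
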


\begin{theorem}
\label{thm:induced}
    Let $G$ and $H$ be as above, then the exact sequence 
    \begin{equation} \label{exact seq}
    1\rightarrow \rho(H)\rightarrow  \rho(G) \rightarrow G/H \rightarrow 1 \end{equation}
    
    is right split (that is $\rho(G)\cong \rho(H) \rtimes G/H$ for some semidirect product group law) if and only if there exists a set of representatives of $H$-cosets in $G$, $\{s_1,s_2,\dots , s_n\}$ such that $\{\rho(s_i)\}_i$ forms a multiplicatively closed subset of $\rho(G)$. (In fact it is a subgroup with same group structure as $\{s_i H\}_i= G/H$)
\end{theorem}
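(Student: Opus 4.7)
The plan is to handle the two directions of the equivalence separately, using the explicit surjection $\gamma: \rho(G) \to G/H$ from Lemma~\ref{lemma-exact}, which sends $\rho(hs_k)$ to $s_k H$.

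For the forward direction, suppose the sequence is right split by a section $\iota: G/H \to \rho(G)$. I would select, for each coset $s_i H$, an element $t_i \in G$ with $\rho(t_i) = \iota(s_i H)$. Because $\gamma \circ \iota$ is the identity on $G/H$ and $\gamma(\rho(hs_k)) = s_k H$, each $t_i$ must lie in $Hs_i$, so $\{t_1, \ldots, t_n\}$ is again a full set of $H$-coset representatives. The set $\{\rho(t_1), \ldots, \rho(t_n)\} = \iota(G/H)$ is the image of a group homomorphism and hence is a subgroup of $\rho(G)$, in particular multiplicatively closed.

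For the backward direction, assume we have representatives $\{s_1, \ldots, s_n\}$ (arranged so that $s_1 \in H$, whence $\rho(s_1) = I$) with $\{\rho(s_i)\}_i$ multiplicatively closed in $\rho(G)$. Since this set is finite, contains the identity, and is closed under multiplication inside the group $\rho(G)$, it is automatically a subgroup. I would then define $\iota: G/H \to \rho(G)$ by $\iota(s_i H) = \rho(s_i)$. The key verification is that $\iota$ respects the group law: if $s_i s_j \in s_k H$, then $\rho(s_i)\rho(s_j) = \rho(s_i s_j) = \rho(s_l)$ for some $l$ by multiplicative closure, and applying $\gamma$ to both sides gives $s_l H = s_i s_j H = s_k H$, forcing $s_l = s_k$. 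Hence $\iota((s_i H)(s_j H)) = \iota(s_i H) \iota(s_j H)$. Since $\gamma \circ \iota = \mathrm{id}_{G/H}$ is immediate from the definition of $\gamma$, the map $\iota$ is the desired section and the sequence is right split.

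The main delicate step is the homomorphism check in the backward direction: it is precisely the interplay between the multiplicative closure of $\{\rho(s_i)\}_i$ and the coset information encoded by $\gamma$ that forces products to land in the right place. The parenthetical claim in the statement, that $\iota(G/H)$ is a subgroup with the same group structure as $G/H$, then follows at once from $\iota$ being an injective homomorphism (injectivity because $\gamma \circ \iota = \mathrm{id}_{G/H}$).
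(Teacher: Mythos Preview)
Your proof is correct and follows essentially the same route as the paper's: in both directions you use the surjection $\gamma$ from Lemma~\ref{lemma-exact} to pass between coset representatives and elements of $\rho(G)$, and your verification that $\iota$ is a homomorphism in the backward direction is in fact more explicit than the paper's. The only small wrinkle is the parenthetical ``$s_1 \in H$, whence $\rho(s_1) = I$'': membership in $H$ alone does not force $\rho(s_1)=I$, but once you know $\{\rho(s_i)\}$ is a (finite, nonempty, multiplicatively closed, hence) subgroup, applying $\gamma$ to its identity element shows that element must be $\rho(s_1)$, so the claim is true and the argument goes through.
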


\begin{proof}

 Let $\{r_1,r_2,...,r_n\}$ be a set of representatives of $H$-cosets in $G$. If the exact sequence is right split, let $\iota$ be splitting with $\gamma \circ \iota = id_{G/H}$. Hence $\gamma \circ \iota (r_iH) = r_iH$ for each $i$. Thus for each $i$, $\iota (r_iH)=\rho(h_ir_i)$ for some $h_i\in H$. Let $s_i=h_ir_i$ for each $i$. Hence $\{s_1,s_2,...,s_n\}$ also forms a set of representatives for $H$-cosets in $G$ with $s_iH=r_iH$ and $\iota (s_iH)=\rho(s_i)$ for each $i$. Since $\iota$ is a homomorphism, $\{\rho(s_i)\}_i$ forms a multiplicatively closed subset of $\rho(G)$.  \smallskip
    
   Conversely, suppose there exists a set of representatives of $H$-cosets in $G$, $\{s_1,s_2,\dots , s_n\}$ with $s_1\in H$ and $\{\rho(s_i)\}_i$ forming a multiplicatively closed subset of $\rho(G)$. Hence $\{\rho(s_i)\}_i$ forms a subgroup of $\rho(G)$. $\{\rho(s_i)\}_i$ is a group with the same group structure as $\{s_i H\}_i= G/H$. Then we can define $\iota(s_i H)=\rho(s_i)$. Hence $\iota$ is a homomorphism. Also, $\gamma(\iota(s_i H))=\gamma(\rho(s_i))=s_i H$ hence $\gamma \circ \iota = id_{G/H}$. Hence the exact sequence is right split. 
\end{proof}

\begin{remark}
    We can similarly prove the statements in Remark \ref{remark proof} in previous section. 
 
\end{remark}

\begin{corollary}

   If there exists a set of coset representatives of $H$ in $G$ which forms a multiplicatively closed set, then the exact sequence \ref{exact seq} is right split.
\end{corollary}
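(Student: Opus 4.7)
The plan is to deduce this corollary as an immediate consequence of Theorem \ref{thm:induced}. First I would suppose that $\{s_1,\dots,s_n\}$ is a set of $H$-coset representatives in $G$ which is multiplicatively closed in $G$, so that for every pair $i,j$ there is an index $k$ with $s_is_j=s_k$. By the analogue of Remark \ref{remark proof}(1) in the present setting (as flagged in the remark preceding this corollary), one could equivalently phrase the hypothesis as saying that $\{s_1,\dots,s_n\}$ is a subgroup of $G$, but the weaker phrasing is all that I need.

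Next I would apply the homomorphism $\rho: G\rightarrow \GL(V)$ to each such identity. Since $\rho$ preserves products, $s_is_j=s_k$ yields $\rho(s_i)\rho(s_j)=\rho(s_k)$, so the image $\{\rho(s_1),\dots,\rho(s_n)\}$ is multiplicatively closed as a subset of $\rho(G)$. This is exactly the hypothesis of Theorem \ref{thm:induced}, whose conclusion is that the exact sequence (\ref{exact seq}) is right split. Invoking the theorem therefore completes the argument.

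I do not anticipate a substantive obstacle: the corollary is essentially the observation that the condition on $\{\rho(s_i)\}$ in Theorem \ref{thm:induced} is automatically satisfied whenever the stronger, purely group-theoretic condition on the $s_i$ themselves holds, and this implication is immediate from the homomorphism property of $\rho$. The one point worth flagging in the final write-up is that we neither need the $\rho(s_i)$ to be distinct nor need them to form a set of coset representatives for $\rho(H)$ in $\rho(G)$; Theorem \ref{thm:induced} only asks for a set of $H$-coset representatives in $G$ whose $\rho$-image is multiplicatively closed in $\rho(G)$, and the hypothesis of the corollary supplies this for free.
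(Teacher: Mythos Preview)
Your proposal is correct and is exactly the intended argument: the corollary is stated in the paper without proof because it follows immediately from Theorem~\ref{thm:induced} via the homomorphism property of $\rho$, precisely as you outline.
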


\begin{corollary}
    Let $K$ be a cyclic extension  of $\mathbb{Q}$ and let $G$ and $H$ be as above. Then the exact sequence \ref{exact seq} is right split if and only if there exists a set of representatives of $H$-cosets in $G$ of the form $\{1,s,s^2,\dots , s^{n-1}\}$ with $\rho(s)^n =1$. Also, if these statements are true then $order(\rho(s))=n$ and $s^n\in H$.
\end{corollary}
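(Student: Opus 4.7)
The plan is to reduce the corollary to Theorem \ref{thm:induced} by using the cyclicity of $G/H$ to upgrade a generic multiplicatively closed set of coset representatives into one of the prescribed shape $\{1, s, s^2, \ldots, s^{n-1}\}$. Throughout set $n = [G:H]$; by hypothesis $G/H$ is cyclic of this order.

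For the easy direction, suppose such a family $\{1, s, s^2, \ldots, s^{n-1}\}$ with $\rho(s)^n = 1$ is given. Since $\rho$ is a homomorphism, the image set $\{\rho(s^i): 0 \le i \le n-1\} = \{\rho(s)^i: 0 \le i \le n-1\}$ is exactly the cyclic subgroup of $\rho(G)$ generated by $\rho(s)$, hence is multiplicatively closed, and Theorem \ref{thm:induced} yields the right splitting.

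For the converse I would apply Theorem \ref{thm:induced} to obtain representatives $\{s_1, \ldots, s_n\}$ for which $\{\rho(s_i)\}_i$ is a subgroup of $\rho(G)$ whose group structure coincides, via $\rho(s_i) \mapsto s_i H$, with that of $G/H$. Since $G/H$ is cyclic of order $n$, so is this subgroup; pick $t \in \{s_i\}$ with $\rho(t)$ a generator, so that $\rho(t)^n = 1$. The corresponding coset $tH$ then generates $G/H$, whence $\{1, t, t^2, \ldots, t^{n-1}\}$ is itself a set of $H$-coset representatives of the desired form, and setting $s = t$ completes this direction.

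For the concluding statement, the cosets $H, sH, s^2 H, \ldots, s^{n-1} H$ are distinct in $G/H$, and the map $\gamma$ of Lemma \ref{lemma-exact} sends $\rho(s)^k$ to $s^k H$, so the powers $\rho(s)^0, \ldots, \rho(s)^{n-1}$ are pairwise distinct; combined with $\rho(s)^n = 1$ this forces $\operatorname{order}(\rho(s)) = n$, and $s^n H = (sH)^n = H$ gives $s^n \in H$. The only genuinely subtle point is the observation that a generator $\rho(t)$ of the cyclic image $\{\rho(s_i)\}_i$ corresponds under the structural isomorphism of Theorem \ref{thm:induced} to a generator $tH$ of $G/H$; everything else is bookkeeping with cyclic groups.
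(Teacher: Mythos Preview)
Your proof is correct and follows essentially the same approach as the paper: both invoke Theorem~\ref{thm:induced} to produce a multiplicatively closed image set $\{\rho(s_i)\}$ isomorphic to the cyclic group $G/H$, pick an element corresponding to a generator, and pass to its powers. Your write-up is in fact more complete than the paper's, which treats only the harder direction explicitly and declares the remaining assertions ``clear.''
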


\begin{proof}
From the above theorem, we get coset representatives of $H$, $\{s_i\}_i$ with $\{\rho(s_i)\}_i$ forming a subgroup with same group structure as $\{s_i H\}_i= G/H = <sH>$ for some $s\in G$ with $s_1\in H$. Hence without loss of generality, let $s_i H = s^{i-1} H$ for all $i$. Now let $r_i=s_2^{i-1}$. Hence $r_i H=s_i H$. Now since $s_2^n H= (s_2 H)^n=H$, we have $\tilde{\rho}(s_2)^n=1$. Hence we are done. Other assertions are clear.\end{proof}

Applying above discussion to $\tilde{\rho}$ and $\tilde{\pi}$ we have the following.

\begin{theorem}
\label{thm: M}
 Let $H$ be a finite index normal subgroup of $G=Gal(\bar{\mathbb{Q}}/\mathbb{Q})$. Suppose there exists a set of coset representatives of $H$ in $G$ 
 which form a multiplicatively closed subset of $G$. If a finite group $M$ is realizable as a Galois group over $\mathbb{Q}$ through a Galois Representation $(\pi, W)$ such that $M\cong \tilde{\pi}(G)=\tilde{\pi}(H)$, then $M \rtimes G/H$ is realisable as a Galois group over $\mathbb{Q}$ (for semidirect product group law as in Theorem \ref{thm:induced}).
\end{theorem}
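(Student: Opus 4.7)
The plan is to construct the desired realization by forming the induced Galois representation and extracting its projective image. First I would set $\sigma = \pi|_H$ and form $\rho = \Ind_H^G(\sigma)$ on the induced space $V$ exactly as set up in the subsection preceding Theorem~\ref{thm:induced}, producing a continuous Galois representation $\rho : G \to \GL_{nm}(\mathcal{O}_{\Lambda})$ where $n = [G : H]$, with projectivization $\tilde{\rho} : G \to \PGL_{nm}(\mathbb{F}_{\Lambda})$.

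Next I would read off the structure of $\tilde{\rho}(G)$ from the lemmas already in hand. The lemma that $\rho(H) \cong \pi(H)$ holds also for $\tilde{\rho}$ and $\tilde{\pi}$ by the remark preceding Lemma~\ref{lemma-exact}, so combined with the hypothesis $\tilde{\pi}(G) = \tilde{\pi}(H) \cong M$ we obtain $\tilde{\rho}(H) \cong M$. The projective version of Lemma~\ref{lemma-exact} then supplies the exact sequence
\[
1 \to \tilde{\rho}(H) \to \tilde{\rho}(G) \to G/H \to 1.
\]

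Finally, the assumed multiplicatively closed set of coset representatives of $H$ in $G$ is precisely the hypothesis of Theorem~\ref{thm:induced} applied to $\tilde{\rho}$, so this sequence is right split, yielding $\tilde{\rho}(G) \cong \tilde{\rho}(H) \rtimes G/H \cong M \rtimes G/H$ with the semidirect product law coming from the splitting as in Theorem~\ref{thm:induced}. Since $\tilde{\rho}(G)$ is finite, $M \rtimes G/H$ is realizable as a Galois group over $\mathbb{Q}$ through $\rho$ by the definition of realizability via a Galois representation. The proof is essentially an assembly of previously established lemmas; the only point requiring care is that the hypothesis gives $\tilde{\pi}(G) = \tilde{\pi}(H)$ rather than merely realizing $M$ abstractly, and this equality is what ensures the normal factor $\tilde{\rho}(H)$ in the split extension coincides with the given copy of $M$.
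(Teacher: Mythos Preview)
Your proposal is correct and matches the paper's intended argument: the paper presents Theorem~\ref{thm: M} with the single sentence ``Applying above discussion to $\tilde{\rho}$ and $\tilde{\pi}$ we have the following,'' and you have simply unpacked that sentence into its constituent steps (the isomorphism $\tilde{\rho}(H)\cong\tilde{\pi}(H)$, the exact sequence of Lemma~\ref{lemma-exact}, and the splitting from Theorem~\ref{thm:induced}). One tiny nitpick: the hypothesis of Theorem~\ref{thm:induced} asks that $\{\tilde{\rho}(s_i)\}$ be multiplicatively closed, whereas you have $\{s_i\}$ itself multiplicatively closed in $G$; the passage from the latter to the former is immediate since $\tilde{\rho}$ is a homomorphism, and the paper records exactly this implication as the corollary right after Theorem~\ref{thm:induced}.
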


Observation : $G=H\iff ker(\pi)\subset H \ and \ \pi(G)=\pi(H).$\smallskip

We can generalize Lemma ~\ref{lemma-exact} and Theorem~\ref{thm:induced}. Consider a closed subgroup $H'$ of $G$. We have $H'/(H'\cap H)\xhookrightarrow{} G/ H$. \bigskip

\bigskip

\bigskip

\begin{theorem} \label{general1} \hfill 
    
\begin{enumerate}
  \item We get an exact sequence \begin{equation}\label{general exact}   
  1\rightarrow \rho(H'\cap H)\rightarrow \rho(H') \rightarrow H'/(H'\cap H) \rightarrow 1.\end{equation}
   
\item
The exact sequence \ref{general exact} is right split (that is $\rho(H')\cong \rho(H'\cap H) \rtimes H'/(H'\cap H)$ for some semidirect product group law) if and only if there exists a set of representatives of $(H'\cap H)$-cosets in $H'$, $\{s_1,s_2,\dots , s_l\}$ such that $\{\rho(s_i)\}_i$ forms a multiplicatively closed subset of $\rho(H')$.    

    \item $H\subset H'$ if and only if $\rho(H)\subset \rho(H')$ and $ker(\pi)\cap H\subset H'$

    \item Let $H\subset H'$. Then the exact sequence \ref{general exact} is $$1\rightarrow \rho(H)\rightarrow \rho(H') \rightarrow H'/H \rightarrow 1.$$ If there exists a set of representatives of $H$-cosets in $G$, $\{s_1,s_2,\dots , s_n\}$ such that $\{\rho(s_i)\}_i$ forms a multiplicatively closed subset of $\rho(G)$, then this exact sequence is right split. That is $\rho(H')\cong \rho(H) \rtimes H'/H$ for some semidirect product group law.

    \end{enumerate}
\end{theorem}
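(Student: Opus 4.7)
The plan is to generalise the block-matrix machinery that underlies Lemma \ref{lemma-exact} and Theorem \ref{thm:induced} by restricting to the subgroup $H'$, and then to specialise to the case $H \subset H'$. A key auxiliary fact throughout is the identity $\ker(\rho) = \ker(\pi) \cap H$, which I would establish first from the block-matrix description of $\rho(hs_j)$: its nonzero $m \times m$ blocks sit in positions determined solely by the coset $Hs_j$, so $\rho(g) = I$ forces $g$ to lie in $Hs_1 = H$; once $g \in H$ the diagonal form gives $\pi(s_p g s_p^{-1}) = \pi(s_p)\pi(g)\pi(s_p)^{-1} = I$ for all $p$, which (taking $p=1$) is equivalent to $\pi(g)=I$. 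In particular $\ker(\rho) \subset H$.

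For part (1), I would simply restrict the surjection $\gamma : \rho(G) \to G/H$ of Lemma \ref{lemma-exact} to $\rho(H')$. Its image is $\{h'H : h' \in H'\}$, canonically identified with $H'/(H' \cap H)$ by the second isomorphism theorem. The kernel of the restriction is $\rho(H') \cap \rho(H)$; using $\ker(\rho) \subset H$, one checks this equals $\rho(H' \cap H)$: if $\rho(h') = \rho(h)$ with $h' \in H'$ and $h \in H$, then $h'h^{-1} \in \ker(\rho) \subset H$, so $h' \in H' \cap H$. This yields the exact sequence.

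Part (2) is a direct adaptation of the proof of Theorem \ref{thm:induced}. Given a splitting $\iota$, well-definedness of $\gamma$ forces $\iota(r_i(H' \cap H)) = \rho(h_i r_i)$ for some $h_i \in H' \cap H$, and setting $s_i = h_i r_i$ produces representatives with $\{\rho(s_i)\}$ multiplicatively closed. Conversely, given such representatives, the assignment $s_i(H' \cap H) \mapsto \rho(s_i)$ splits $\gamma$: if $s_i s_j \in s_k (H' \cap H)$ then multiplicative closure gives $\rho(s_i)\rho(s_j) = \rho(s_l)$ for some $l$, and applying $\gamma$ forces $l = k$, so the assignment is a homomorphism. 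For part (3), the forward direction is trivial; for the converse, given $h \in H$ pick $h' \in H'$ with $\rho(h) = \rho(h')$, so $h^{-1}h' \in \ker(\rho) = \ker(\pi) \cap H \subset H'$ by hypothesis, and then $h = h'(h^{-1}h')^{-1} \in H'$. For part (4), $H \subset H'$ makes $H' \cap H = H$, which turns the sequence of part (1) into the stated one. Given representatives $\{s_1, \ldots, s_n\}$ of $H$-cosets in $G$ with $\{\rho(s_i)\}$ multiplicatively closed, the subset $\{s_i : s_i \in H'\}$ is still a complete set of representatives of $H$-cosets in $H'$ (since $H \subset H'$) and remains multiplicatively closed, because for $s_i, s_j \in H'$ the unique $s_k$ with $\rho(s_i)\rho(s_j) = \rho(s_k)$ satisfies $s_k \in s_i s_j H \subset H'$. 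Part (2) then delivers the splitting.

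The main obstacle is the preliminary identification of $\ker(\rho)$ together with the precise reading of the block-matrix formula for $\rho(hs_j)$: once one knows that the block positions detect the $H$-coset of $g$ and that the diagonal blocks of $\rho(h)$ are $\pi$-conjugates of $\pi(h)$, the substantive parts (1), (3), (4) reduce to coset bookkeeping, and part (2) is a transcription of the argument already given for Theorem \ref{thm:induced}.
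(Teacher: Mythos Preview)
Your proposal is correct. The paper does not actually supply a proof of this theorem: it simply introduces $H'$ with the sentence ``We can generalize Lemma~\ref{lemma-exact} and Theorem~\ref{thm:induced}'' and states the result. Your argument is exactly the intended generalisation --- restricting the map $\gamma$ of Lemma~\ref{lemma-exact} to $\rho(H')$ and replaying the splitting argument of Theorem~\ref{thm:induced} --- and the explicit identification $\ker(\rho)=\ker(\pi)\cap H$ that you isolate at the start is precisely the missing ingredient that makes parts (1), (3), (4) go through cleanly (the paper uses this implicitly in Lemma~\ref{projection kernels}(1) but never states it in this form).
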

\medskip

\subsection{Galois representations for newforms and application to the cases in work of Zywina}\label{newforms}

Let $p\geq 5$. Fix non-CM newform $f$ of weight $k > 1$ on $\Gamma_1(N)$. Let $l$ be rational prime as in \cite{zywina2023modular}. Let $\pi$ be the Deligne representation associated with $f$ with properties as in \cite[Section 1.1]{zywina2023modular}. Let $W=E_{\Lambda}^{2}$ and $\pi:G\rightarrow \GL(W)$ with $\pi(G)\subset \GL_2(\mathcal{O}_{\Lambda})$ and $\tilde{\pi} :G\rightarrow 
\PGL_2(\mathbb{F}_{\Lambda})$.  \smallskip

Let $p=l$ and assume conditions of  \cite[Section 1.2]{zywina2023modular}. Then ${\rm Image}(\tilde{\pi})=\tilde{\pi}(G)=
\PSL_2(\mathbb{F}_p)$. 
The representations $\sigma=\pi|_{H}: H\rightarrow \GL(W)$ and  $\tilde{\sigma} :H\rightarrow \PGL_2(\mathbb{F}_{\Lambda})$ are defined, and ${\rm Image}(\tilde{\sigma})=\tilde{\pi}(H)\subset \tilde{\pi}(G)$.\smallskip

\begin{lemma}\label{lemma}
    Let $p\geq 5$ and $H$ be normal in $G$ (i.e. K is Galois over Q) and $[G:H]=n < |\PSL_2(\mathbb{F}_p)|$. Then $\tilde{\pi}(H)=\PSL_2(\mathbb{F}_p)$.
\end{lemma}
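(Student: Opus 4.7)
The plan is to exploit the simplicity of $\mathrm{PSL}_2(\mathbb{F}_p)$ for $p \geq 5$, together with the fact that the image of a normal subgroup under a homomorphism is normal in the image.

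First I would observe that since $H$ is normal in $G$ and $\tilde{\pi}: G \to \mathrm{PGL}_2(\mathbb{F}_{\Lambda})$ is a homomorphism, the image $\tilde{\pi}(H)$ is a normal subgroup of $\tilde{\pi}(G) = \mathrm{PSL}_2(\mathbb{F}_p)$. This is the standard fact that $\tilde{\pi}(H) = \tilde{\pi}(gHg^{-1}) = \tilde{\pi}(g)\tilde{\pi}(H)\tilde{\pi}(g)^{-1}$ for all $g \in G$, combined with the surjection $\tilde{\pi}(G) = \mathrm{PSL}_2(\mathbb{F}_p)$.

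Next, I would invoke the classical fact that $\mathrm{PSL}_2(\mathbb{F}_p)$ is a simple non-abelian group for every prime $p \geq 5$. Consequently, the normal subgroup $\tilde{\pi}(H)$ must be either the trivial group or all of $\mathrm{PSL}_2(\mathbb{F}_p)$.

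To rule out triviality, I would use the general index inequality noted earlier in the section, namely $[\tilde{\pi}(G):\tilde{\pi}(H)] \leq [G:H] = n$ (this follows from the fact that distinct cosets of $\tilde{\pi}(H)$ in $\tilde{\pi}(G)$ are hit by distinct cosets of $H$ in $G$). If $\tilde{\pi}(H)$ were trivial, then $[\tilde{\pi}(G):\tilde{\pi}(H)] = |\mathrm{PSL}_2(\mathbb{F}_p)|$, contradicting the hypothesis $n < |\mathrm{PSL}_2(\mathbb{F}_p)|$. Therefore $\tilde{\pi}(H) = \mathrm{PSL}_2(\mathbb{F}_p)$.

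There is essentially no obstacle here; the lemma is a direct two-line application of simplicity plus the index bound. The only point requiring any care is confirming that $\tilde{\pi}(H)$ is indeed normal in $\tilde{\pi}(G)$ (not merely in $\tilde{\pi}(H)$ itself), which uses surjectivity of $\tilde{\pi}$ onto $\mathrm{PSL}_2(\mathbb{F}_p)$.
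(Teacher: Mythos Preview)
Your proof is correct and follows essentially the same approach as the paper: use the index inequality $[\tilde{\pi}(G):\tilde{\pi}(H)] \leq [G:H] = n < |\mathrm{PSL}_2(\mathbb{F}_p)|$ to rule out triviality of $\tilde{\pi}(H)$, note that $\tilde{\pi}(H)$ is normal in $\tilde{\pi}(G)=\mathrm{PSL}_2(\mathbb{F}_p)$, and conclude by simplicity. The only difference is the order in which you state the two ingredients, which is immaterial.
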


\begin{proof}
Now, $[\PSL_2(\mathbb{F}_p):\tilde{\pi}(H)]= [\tilde{\pi}(G) :\tilde{\pi}(H)] \leq [G:H] = n < |\PSL_2(\mathbb{F}_p)|$. Hence $\tilde{\pi}(H)$ is not trivial. Since $H$ is normal in $G$, $\tilde{\pi}(H)$ is normal in $\tilde{\pi}(G)$. Now since $\PSL_2(\mathbb{F}_p)$ is simple we have that $\PSL_2(\mathbb{F}_p) = \tilde{\pi}(H)$ otherwise $\tilde{\pi}(H)$ will become non-trivial normal subgroup in $\PSL_2(\mathbb{F}_p)$.
\end{proof}\smallskip

\begin{remark}
    If $n \geq |\PSL_2(\mathbb{F}_p)|$ then either $\tilde{\pi}(H)=\PSL_2(\mathbb{F}_p)$ or $\tilde{\pi}(H)$ is trivial. If latter case happens, then $\tilde{\rho}(H)$ is also trivial. Hence $\tilde{\rho}(G)\cong G/H$ in that case.
\end{remark}\smallskip
 \begin{proposition}\label{PGL-PSL}
    \hfill
    \begin{enumerate}

    \item For $p\geq 5$, $\PGL_2(\mathbb{F}_p) \cong \PSL_2(\mathbb{F}_p) \rtimes \mathbb{Z}/2\mathbb{Z}$ for some semidirect product group law and the semidirect product is not direct product.
        \item Let $p\geq 5$. For any semidirect product group law such that $\PGL_2(\mathbb{F}_p) \cong \PSL_2(\mathbb{F}_p) \rtimes \mathbb{Z}/2\mathbb{Z}$, the semidirect product is not direct product.
        Furthermore, the automorphism of $\PSL_2(\mathbb{F}_p)$, given by conjugation by image of generator of $\mathbb{Z}/2\mathbb{Z}$ in $\PGL_2(\mathbb{F}_p)$, is not inner. 
    \end{enumerate}
\end{proposition}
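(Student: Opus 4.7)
For part (1), the approach is to exhibit an explicit order-$2$ element of $\PGL_2(\F_p)$ lying outside $\PSL_2(\F_p)$, which then furnishes a complementary $\Z/2\Z$ subgroup and hence a splitting of $1 \to \PSL_2(\F_p) \to \PGL_2(\F_p) \to \Z/2\Z \to 1$. Reusing the matrix from Proposition~\ref{prop : psl2}, I would pick a non-square $r \in \F_p^\times$ (which exists since $p \geq 5$) and set
\begin{equation*}
x \;=\; \begin{pmatrix} 1 & r+1 \\ -1 & -1 \end{pmatrix} \in \GL_2(\F_p).
\end{equation*}
A direct computation gives $x^2 = -rI$ and $\det(x) = r$. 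Thus the image $\bar x \in \PGL_2(\F_p)$ has order $2$ and does not lie in $\PSL_2(\F_p)$, so $\langle \bar x \rangle$ is the desired complement.

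For part (2), the key structural input I would establish is the following claim: \emph{the centralizer $C_{\PGL_2(\F_p)}(\PSL_2(\F_p))$ is trivial for $p \geq 5$.} Granting this, both assertions of (2) drop out immediately. If some complement $\langle g \rangle$ produced a direct product, then $g$ would centralize $\PSL_2(\F_p)$ and hence $g = 1$, a contradiction; and if conjugation by $g$ coincided with conjugation by some $h \in \PSL_2(\F_p)$, then $gh^{-1}$ would centralize $\PSL_2(\F_p)$, forcing $g = h \in \PSL_2(\F_p)$, again contradicting that $g$ generates the complement. The last sentence of part (1) is then just the first assertion of (2) applied to the splitting constructed above.

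To prove the claim, I would lift any $g \in C_{\PGL_2(\F_p)}(\PSL_2(\F_p))$ to $\tilde g \in \GL_2(\F_p)$. For each $\tilde s \in \SL_2(\F_p)$, the commutator $[\tilde g, \tilde s]$ must lie in the kernel $\{\pm I\}$ of $\SL_2(\F_p) \to \PSL_2(\F_p)$, and a one-line computation shows that $\tilde s \mapsto [\tilde g, \tilde s]$ is multiplicative and insensitive to the choice of lift, hence descends to a homomorphism $\PSL_2(\F_p) \to \{\pm 1\}$. Since $\PSL_2(\F_p)$ is simple for $p \geq 5$, this homomorphism is trivial, so $\tilde g$ centralizes all of $\SL_2(\F_p)$ inside $\GL_2(\F_p)$. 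Testing $\tilde g$ against the two transvections $\begin{pmatrix}1 & 1 \\ 0 & 1\end{pmatrix}$ and $\begin{pmatrix}1 & 0 \\ 1 & 1\end{pmatrix}$ then forces $\tilde g$ to be scalar, i.e.\ $g = 1$ in $\PGL_2(\F_p)$.

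The only delicate step is the claim itself; everything else is formal. A more conceptual alternative for the same conclusion is to invoke the classical fact that $\mathrm{Aut}(\PSL_2(\F_p)) \cong \PGL_2(\F_p)$ for $p \geq 5$, which makes the conjugation map $\PGL_2(\F_p) \to \mathrm{Aut}(\PSL_2(\F_p))$ an isomorphism by comparing orders, and its kernel --- exactly the centralizer in question --- is therefore trivial.
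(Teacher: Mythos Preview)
Your argument is correct in every part. The explicit element $\bar x$ with $x = \begin{pmatrix}1 & r+1\\-1 & -1\end{pmatrix}$ for a non-square $r$ gives the splitting in part~(1) (this is the same matrix appearing in Proposition~\ref{prop : psl2}), and your centralizer computation for part~(2) is clean: the map $\tilde s \mapsto [\tilde g,\tilde s]$ is indeed a well-defined homomorphism $\PSL_2(\F_p)\to\{\pm 1\}$ once one observes that each commutator is central, and simplicity kills it; the transvection test then forces $\tilde g$ to be scalar. Both consequences you draw (non-directness, non-innerness) follow immediately from triviality of $C_{\PGL_2(\F_p)}(\PSL_2(\F_p))$ exactly as you say.

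As for comparison with the paper: the paper states Proposition~\ref{PGL-PSL} without proof, presumably treating it as standard. So there is nothing to compare against, and your write-up in fact supplies what the paper omits. One cosmetic remark: non-squares in $\F_p^\times$ exist for every odd prime, not only for $p\ge 5$; the restriction $p\ge 5$ is needed elsewhere (simplicity of $\PSL_2(\F_p)$), not for the existence of $r$.
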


\bigskip

\bigskip

\bigskip
 
 \begin{theorem}
     \label{thm : zywina case}
\hfill     
\begin{enumerate}
    \item  For $p\geq 5$, $\PSL_2(\mathbb{F}_p)\rtimes \mathbb{Z}/2\mathbb{Z}$ is realisable as Galois Group over $\mathbb{Q}$ (for semidirect product group law in Theorem \ref{thm:induced}).

     \item This semidirect product in part (1) is direct $\iff$ $\tilde{\pi}(s)= I$.

     \item Automorphism $\phi_{\tilde{\rho}(s)}$ of $\tilde{\rho}(H)$, by conjugation by $\tilde{\rho}(s)$, is inner.

      \item The group obtained here is not isomorphic to $\PGL_2(\mathbb{F}_p)$.

     \end{enumerate}
  \end{theorem}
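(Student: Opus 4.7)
The plan is to apply Theorem~\ref{thm: M} with a specific quadratic extension and then read off the semidirect-product structure from the block form of the induced representation.

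For part (1), take $K$ to be any imaginary quadratic field, say $K=\mathbb{Q}(i)$, and set $H=\mathrm{Gal}(\bar{\mathbb{Q}}/K)$, a normal subgroup of $G$ of index $2$. Letting $s\in G$ be a complex conjugation, the fact that $K\not\subset\mathbb{R}$ forces $s\notin H$, while $s^2=1$ in $G$. Thus $\{1,s\}$ is a set of $H$-coset representatives in $G$ which is multiplicatively closed inside $G$. For $p\geq 5$ one has $|\PSL_2(\mathbb{F}_p)|\geq 60>2=[G:H]$, so Lemma~\ref{lemma} yields $\tilde\pi(H)=\PSL_2(\mathbb{F}_p)=\tilde\pi(G)$. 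Theorem~\ref{thm: M}, applied with $M=\PSL_2(\mathbb{F}_p)$, then realises $\PSL_2(\mathbb{F}_p)\rtimes\mathbb{Z}/2\mathbb{Z}$ as a Galois group over $\mathbb{Q}$.

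For parts (2) and (3), use the explicit matrices from the previous subsection. Because $s^2=1$, one has $\rho(s)=\bigl(\begin{smallmatrix}0&I_2\\I_2&0\end{smallmatrix}\bigr)$, while $\rho(h)=\mathrm{diag}(\pi(h),\pi(shs^{-1}))$ for $h\in H$, hence $\rho(s)\rho(h)\rho(s)^{-1}=\mathrm{diag}(\pi(shs^{-1}),\pi(h))$. Passing to $\PGL$ and using the first-block isomorphism $\tilde\rho(H)\xrightarrow{\ \cong\ }\tilde\pi(H)=\PSL_2(\mathbb{F}_p)$, the automorphism $\phi_{\tilde\rho(s)}$ corresponds to conjugation by $\tilde\pi(s)$ on $\PSL_2(\mathbb{F}_p)$. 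Since $\tilde\pi(s)\in\tilde\pi(G)=\tilde\pi(H)=\PSL_2(\mathbb{F}_p)$, this is an \emph{inner} automorphism of $\tilde\rho(H)$, settling part (3). For part (2), the semidirect product is direct iff this conjugation is trivial on $\PSL_2(\mathbb{F}_p)$, i.e.\ iff $\tilde\pi(s)$ is central in $\PSL_2(\mathbb{F}_p)$; since the centre is trivial for $p\geq 5$, this amounts to $\tilde\pi(s)=I$.

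Part (4) proceeds by contradiction. Suppose $\tilde\rho(G)\cong\PGL_2(\mathbb{F}_p)$. Because $\tilde\rho(H)$ is a normal subgroup of index $2$ of $\tilde\rho(G)$ isomorphic to the non-abelian simple group $\PSL_2(\mathbb{F}_p)$, and because $\PSL_2(\mathbb{F}_p)$ is the unique index-$2$ subgroup of $\PGL_2(\mathbb{F}_p)$, any such isomorphism transports the right-split decomposition $\tilde\rho(G)\cong\tilde\rho(H)\rtimes\mathbb{Z}/2\mathbb{Z}$ into a decomposition $\PGL_2(\mathbb{F}_p)\cong\PSL_2(\mathbb{F}_p)\rtimes\mathbb{Z}/2\mathbb{Z}$ in which the action of $\mathbb{Z}/2\mathbb{Z}$ is inner, by part (3). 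But Proposition~\ref{PGL-PSL}(2) asserts that in every such decomposition of $\PGL_2(\mathbb{F}_p)$ the action must be outer, a contradiction. The most delicate step throughout is the identification of $\phi_{\tilde\rho(s)}$ with conjugation by $\tilde\pi(s)$ through the first-block isomorphism: this makes the innerness in part (3) and the triviality criterion in part (2) transparent, and it is crucial for invoking Proposition~\ref{PGL-PSL}(2) in part (4).
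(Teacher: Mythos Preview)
Your proof is correct and follows essentially the same approach as the paper: the same choice $K=\mathbb{Q}(i)$ with $s$ complex conjugation for part (1), the same block-matrix computation of $\tilde\rho(s)\tilde\rho(h)\tilde\rho(s)^{-1}$ for parts (2) and (3), and the same appeal to Proposition~\ref{PGL-PSL}(2) for part (4). The only cosmetic difference is that for (3) the paper exhibits an explicit $h'\in H$ with $\tilde\pi(h')=\tilde\pi(s)$ and checks directly that $\phi_{\tilde\rho(s)}=\phi_{\tilde\rho(h')}$, whereas you phrase the same computation as transport of $\phi_{\tilde\rho(s)}$ through the first-block isomorphism to conjugation by $\tilde\pi(s)\in\tilde\pi(H)$; these are two ways of saying the same thing.
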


 \begin{proof}
     (1) Let $[G:H]=2$. Then $H$ is normal in $G$ ($K$ is Galois). Also $2 < |\PSL_2(\mathbb{F}_p)|$.
      Hence by Lemma \ref{lemma}, $\tilde{\pi}(H)=\PSL_2(\mathbb{F}_p)$.  \smallskip

 We could have chosen $K=\mathbb{Q}(i)$ and $s\in G$ as complex conjugation so that $s^2=1$ and $\{1,s\}$ become representatives of right cosets of $H$ in $G$. Then by Theorem \ref{thm: M} we are done.  \smallskip

 (2) Now, $\tilde{\pi}(G)=\tilde{\pi}(H)=\PSL_2(\mathbb{F}_p)$.  \smallskip

Above semidirect product is direct.

     $\iff$ 
     $\tilde{\rho}(s)$=
$\begin{bmatrix}
0 & I_2 \\
I_2 & 0
\end{bmatrix}$ commutes with every $\tilde{\rho}(h)=$ $\begin{bmatrix}
\tilde{\pi}(h) & 0 \\
0 & \tilde{\pi} (shs^{-1})
\end{bmatrix}$. 

$\iff$
$\tilde{\pi}(s)$ commutes with every $\tilde{\pi}(h)$. 

$\iff$ $\tilde{\pi}(s)\in Z(\PSL_2(\mathbb{F}_p))$.

$\iff $  $\tilde{\pi}(s)= I$.

Since $Z(\PSL_2(\mathbb{F}_p))$ is trivial as $\PSL_2(\mathbb{F}_p)$ is non-abelian and simple.  \smallskip

(3) Now $\tilde{\pi}(G)=\tilde{\pi}(H)$. Hence $\tilde{\pi}(s)=\tilde{\pi}(h')$ for some $h'\in H$.

So, $\phi_{\tilde{\rho}(s)}(\tilde{\rho}(h))=\tilde{\rho}(s) \tilde{\rho}(h) \tilde{\rho}(s)^{-1} =$ $\begin{bmatrix}
\tilde{\pi}(shs^{-1}) & 0 \\
0 & \tilde{\pi} (h)
\end{bmatrix}=$ $\tilde{\rho}(h') \tilde{\rho}(h) \tilde{\rho}(h')^{-1}$.\smallskip

(4) This follows from (3) and Proposition~\ref{PGL-PSL}.\end{proof}
\smallskip

We have results similar to  (1), (2) and (3) of Theorem \ref{thm : zywina case} for certain simple groups of the form $\PSL_2(\mathbb{F}_q)$.
\begin{theorem} \label{genral q}
   The group $\PSL_2(\mathbb{F}_q)\rtimes \mathbb{Z}/2\mathbb{Z}$ is realisable as Galois Group over $\mathbb{Q}$ (for semidirect product group law in Theorem \ref{thm:induced}) for following $q$.
   \begin{enumerate}
       \item $q=p$ for $p\geq 5$ (From \cite[Section 1.2]{zywina2023modular}).

       \item $q=p^2$ for $p\equiv \pm 2 \ mod \ 5,\ p\geq 7$ (From \cite[Corollary 3.6]{dieulefait1998galois}).
       
  \item $q=p^2$ for $p\equiv \pm 3 \ mod \ 8,\ p\geq 5$ (From \cite[Corollary 3.8]{dieulefait1998galois}).
       
       \item $q=p^3$ for odd prime $p\equiv \pm 2, \pm 3, \pm 4, \pm 6 \ mod \ 13$ (From \cite[Section 1.3]{zywina2023modular}).
       
       \item $q=5^3,3^5,3^4$ (From \cite[Section 2.2, 2.5, 3]{dieulefait2022seven}).\smallskip   
       
   \end{enumerate}
\end{theorem}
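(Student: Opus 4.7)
The plan is to replay the argument of Theorem~\ref{thm : zywina case}(1) uniformly, replacing the prime field $\mathbb{F}_p$ by $\mathbb{F}_q$ and substituting, for each listed case, the appropriate non-CM modular form (or compatible system) supplied by the cited reference. In each case, the reference furnishes a continuous Galois representation $\pi : G = {\rm Gal}(\bar{\mathbb{Q}}/\mathbb{Q}) \to \GL_2(\mathcal{O}_\Lambda)$ whose reduction satisfies $\tilde{\pi}(G) = \PSL_2(\mathbb{F}_q)$ inside $\PGL_2(\mathbb{F}_\Lambda)$, once $\Lambda$ is chosen so that the residue field contains $\mathbb{F}_q$. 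Once this input is in place, the setup of Section~\ref{newforms} and Theorem~\ref{thm: M} will produce the desired semidirect product.

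Concretely, I would take $K = \mathbb{Q}(i)$ and $H = {\rm Gal}(\bar{\mathbb{Q}}/K)$, with $s \in G$ the complex conjugation, so that $s^2 = 1$ and $\{1, s\}$ is a multiplicatively closed set of representatives of $H$-cosets in $G$. The next step is to upgrade Lemma~\ref{lemma} from $\PSL_2(\mathbb{F}_p)$ to $\PSL_2(\mathbb{F}_q)$: the proof carries over verbatim because $\PSL_2(\mathbb{F}_q)$ is simple whenever $q \geq 4$, which holds in every listed case, and $[G : H] = 2 < |\PSL_2(\mathbb{F}_q)|$, so the nontrivial normal subgroup $\tilde{\pi}(H)$ of $\tilde{\pi}(G) = \PSL_2(\mathbb{F}_q)$ must equal $\PSL_2(\mathbb{F}_q)$. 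Applying Theorem~\ref{thm: M} with $M = \PSL_2(\mathbb{F}_q)$ then realizes $\PSL_2(\mathbb{F}_q) \rtimes \mathbb{Z}/2\mathbb{Z}$ as a Galois group over $\mathbb{Q}$ for the semidirect product law of Theorem~\ref{thm:induced}.

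The main obstacle is bookkeeping: for each of the five cases, one must verify that the projective image of the referenced representation is exactly $\PSL_2(\mathbb{F}_q)$ (not a strictly larger subgroup of $\PGL_2(\mathbb{F}_\Lambda)$ such as $\PGL_2(\mathbb{F}_q)$), and identify the residue field correctly. For case (1), this is already subsumed by Theorem~\ref{thm : zywina case}, while case (4) follows from \cite[Section 1.3]{zywina2023modular}. For cases (2), (3) and (5), one invokes Dieulefait's results \cite[Corollaries 3.6, 3.8]{dieulefait1998galois} and \cite[Sections 2.2, 2.5, 3]{dieulefait2022seven}, which provide the needed realizations of $\PSL_2(\mathbb{F}_{p^2})$, $\PSL_2(\mathbb{F}_{5^3})$, $\PSL_2(\mathbb{F}_{3^5})$, and $\PSL_2(\mathbb{F}_{3^4})$ as projective images. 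Once these identifications are made, the remainder is a direct instantiation of the framework of Section~\ref{newforms}, with no new ideas beyond those already used in the proof of Theorem~\ref{thm : zywina case}(1).
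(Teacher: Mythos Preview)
Your proposal is correct and matches the paper's approach exactly: the paper does not give an explicit proof of this theorem but merely states that ``results similar to (1), (2) and (3) of Theorem~\ref{thm : zywina case}'' hold for these $\PSL_2(\mathbb{F}_q)$, and your write-up correctly unpacks this by taking $K=\mathbb{Q}(i)$ with complex conjugation, invoking simplicity of $\PSL_2(\mathbb{F}_q)$ to extend Lemma~\ref{lemma}, and applying Theorem~\ref{thm: M} once the cited references supply the representations with projective image $\PSL_2(\mathbb{F}_q)$.
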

\smallskip

\begin{remark}
    One has even more general conditions for $q=p^2$ (See \cite[Theorem 3.1]{dieulefait2000projective}) and $q=p^4$ (See \cite[3.3]{dieulefait2000projective}).

\end{remark}
 \smallskip

\subsection{Direct sum / tensor product of representations and Galois groups  }

\begin{lemma}

   If for each $1\leq i\leq n$, finite group $H_i$  is realizable as a Galois group over $\mathbb{Q}$
    through Galois representation $(\pi_i,V_i)$ for $m_i$-dimensional vector spaces $V_i$ over $E_{\Lambda}$, then
   $\{(\tilde{\pi}_1(g),...,\tilde{\pi}_n(g))\ |\ g\in G\}\subset H_1\times H_2 \times \dots \times 
    H_n$ is realisable as Galois group over $\mathbb{Q}$. 

\end{lemma}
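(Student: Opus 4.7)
The plan is to realize the subgroup
$$M := \{(\tilde{\pi}_1(g),\ldots,\tilde{\pi}_n(g)) \mid g \in G\} \subset H_1 \times \cdots \times H_n$$
as the projective image of a single Galois representation, in keeping with the theme of this subsection on direct sums and tensor products of representations. The natural candidate is the \emph{tensor product} $\pi := \pi_1 \otimes \pi_2 \otimes \cdots \otimes \pi_n$, acting on $V := V_1 \otimes_{E_{\Lambda}} \cdots \otimes_{E_{\Lambda}} V_n$ by $\pi(g) = \pi_1(g) \otimes \cdots \otimes \pi_n(g)$. Continuity of $\pi$ and invertibility of each $\pi(g)$ are inherited immediately from the corresponding properties of the $\pi_i$, so $(\pi,V)$ is itself a Galois representation in the sense of the definitions at the start of Section~\ref{Gal-rep}.

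The core step is to produce a natural isomorphism $\Psi \colon \tilde{\pi}(G) \to M$ given by $\tilde{\pi}(g) \mapsto (\tilde{\pi}_1(g),\ldots,\tilde{\pi}_n(g))$. Surjectivity is built into the definition of $M$, and the rule is a group homomorphism because each $\tilde{\pi}_i$ is one. What needs to be verified is that $\Psi$ is both well defined and injective, and both reduce to the following tensor rigidity fact, which I would isolate as a preliminary lemma: for invertible operators $A_i, B_i \in \GL(V_i)$ over $E_{\Lambda}$, one has $A_1 \otimes \cdots \otimes A_n = c \cdot (B_1 \otimes \cdots \otimes B_n)$ for some scalar $c$ if and only if there exist scalars $c_1,\ldots,c_n$ with $A_i = c_i B_i$ for every $i$ and $\prod_{i=1}^n c_i = c$. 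The nontrivial direction of this rigidity statement (that the factorization of a pure tensor of invertible maps is unique up to compatible rescalings) is the main, though standard, technical point in the proof.

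Granted the rigidity lemma, the argument closes quickly. If $\tilde{\pi}(g) = \tilde{\pi}(g')$, then $\pi(g) = c \cdot \pi(g')$ for some scalar $c$, which forces $\pi_i(g) = c_i \pi_i(g')$ for each $i$, hence $\tilde{\pi}_i(g) = \tilde{\pi}_i(g')$, giving well-definedness of $\Psi$. Conversely, if $\tilde{\pi}_i(g) = \tilde{\pi}_i(g')$ for all $i$, then $\pi_i(g) = c_i \pi_i(g')$ for suitable scalars $c_i$, and taking the tensor product yields $\pi(g) = (\prod_i c_i) \cdot \pi(g')$, so $\tilde{\pi}(g) = \tilde{\pi}(g')$, giving injectivity. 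Hence $M \cong {\rm Image}(\tilde{\pi})$, and by the definition recalled at the start of the section, $M$ is realizable as a Galois group over $\mathbb{Q}$ through the Galois representation $(\pi,V)$.

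A representation-free shortcut is also available if one prefers the bare realization statement: consider the product homomorphism $\psi \colon G \to \prod_{i=1}^n H_i$ defined by $g \mapsto (\tilde{\pi}_i(g))_i$. Since each $H_i$ is finite and discrete, every $\ker(\tilde{\pi}_i)$ is open in $G$, hence so is $\ker(\psi) = \bigcap_i \ker(\tilde{\pi}_i)$, yielding a finite-index closed normal subgroup. The infinite Galois correspondence then produces a finite Galois extension $L/\mathbb{Q}$ with ${\rm Gal}(L/\mathbb{Q}) \cong G/\ker(\psi) \cong M$. The essential content common to either route is the tensor rigidity observation flagged above; everything else is formal bookkeeping.
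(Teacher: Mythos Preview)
Your argument is correct, but it takes a different route from the paper. The paper proves this Lemma via the \emph{direct sum} $\pi = \bigoplus_i \pi_i$, observing that $\pi'(g) = {\rm diag}(\pi_1'(g),\ldots,\pi_n'(g))$ and asserting ${\rm Image}(\tilde{\pi}) \cong M$ without further comment. Your tensor product argument instead anticipates the paper's subsequent Proposition~\ref{tensor direct sum}: the rigidity fact you isolate (uniqueness of factorizations of a pure tensor of invertible maps up to compatible scalars) is precisely what the paper uses there to show that $\tilde{\tau}\colon \PGL(V_1)\times\cdots\times \PGL(V_n)\to \PGL(\bigotimes_i V_i)$ is injective. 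What your approach buys is that both well-definedness and injectivity of $\tilde{\Pi}(g)\mapsto(\tilde{\pi}_i(g))_i$ fall out symmetrically from the same lemma; for the direct sum, injectivity is more delicate, since if each $\pi_i'(g)$ is a scalar matrix but with different scalars, the block-diagonal matrix is not scalar in $\GL_m(\mathbb{F}_\Lambda)$.

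One small correction: since $\tilde{\pi}_i$ and $\tilde{\pi}$ are defined via reduction to $\mathbb{F}_\Lambda$, your rigidity lemma should be stated and applied over $\mathbb{F}_\Lambda$ rather than $E_\Lambda$, and you should note that the Kronecker product commutes with reduction modulo $\Lambda$, so that $\Pi'(g)=\pi_1'(g)\otimes\cdots\otimes\pi_n'(g)$; as written, the step ``$\tilde{\pi}(g)=\tilde{\pi}(g')\Rightarrow \pi(g)=c\cdot\pi(g')$'' conflates the $E_\Lambda$-valued $\pi$ with its reduction $\pi'$. Your representation-free shortcut via the product map $g\mapsto(\tilde{\pi}_i(g))_i$ and infinite Galois correspondence is also valid and is the quickest path to the bare realizability claim, though it does not exhibit $M$ as the projective image of a single Galois representation.
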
 

\begin{proof}
   We have $\pi_i:G\rightarrow \GL(V_i)$ with $\pi_i (G)\subset \GL_{m_i}(\mathcal{O}_{\Lambda})$
    and $\tilde{\pi_i} :G\rightarrow \PGL_{m_i}(\mathbb{F}_{\Lambda})$ such that $H_i\cong {\rm Image}(\tilde{\pi_i})$. Consider Galois representations $(\pi_i, V_i): 1 \leq i \leq n$ and their direct sum Galois representation 
 $\pi = \bigoplus \limits_{1 \leq i \leq n} \pi_i: G \rightarrow \GL (\bigoplus \limits_{1 \leq i \leq n} V_i)$. Now, $ \pi(g)={\rm diag}(\pi_1 (g),\  \dots ,\ \pi_n(g))_{m\times m}$ where $m=m_1+m_2+...+m_n$. Hence $\pi(G)\subset \GL_m (\mathcal{O}_{\Lambda})$ and $\tilde{\pi} : G\rightarrow \PGL_m(\mathbb{F}_{\Lambda})$ with 
${\rm Image}(\tilde{\pi} )\cong \{(\tilde{\pi}_1(g),...,\tilde{\pi}_n(g))\ |\ g\in G\}\subset H_1\times H_2 \times \dots \times H_n$.\end{proof}

\smallskip

We consider the tensor product Galois representation $\Pi =\bigotimes  \limits_{1 \leq i \leq n} \pi_i$ of $G$ on 
$\bigotimes  \limits_{1 \leq i \leq n} V_i$.

\begin{proposition} \label{tensor direct sum}
The groups realized as Galois groups over $\mathbb{Q}$ through Galois representations $\pi$ and $\Pi$ are isomorphic, i.e.,
\[ {\rm Image}(\tilde{\pi} )\cong {\rm Image}(\tilde{\Pi}) .\]
\end{proposition}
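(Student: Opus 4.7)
The plan is to compute the kernel of each of the two projective homomorphisms $\tilde{\pi}\colon G\to {\rm PGL}_m(\mathbb{F}_\Lambda)$ and $\tilde{\Pi}\colon G\to {\rm PGL}_{m_1\cdots m_n}(\mathbb{F}_\Lambda)$ and to show that in both cases the quotient $G/\ker$ is canonically identified with the same subgroup $\{(\tilde{\pi_1}(g),\ldots,\tilde{\pi_n}(g)):g\in G\}$ of $H_1\times\cdots\times H_n$. Transitivity of isomorphism will then deliver the conclusion ${\rm Image}(\tilde{\pi})\cong{\rm Image}(\tilde{\Pi})$.

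For the direct-sum side, the identification is precisely the content of the preceding Lemma, which exploits the block-diagonal form of $\pi(g)={\rm diag}(\pi_1(g),\ldots,\pi_n(g))$ to rewrite ${\rm Image}(\tilde{\pi})$ as the projection of $G$ into the product of the $H_i$. For the tensor side, the key input is uniqueness of the tensor factorisation of invertible matrices up to scalars: if $A_1\otimes\cdots\otimes A_n = c\cdot I$ with each $A_i\in{\rm GL}_{m_i}(\mathbb{F}_\Lambda)$, then each $A_i$ is itself a scalar matrix and the product of these scalars equals $c$. Applied to $A_i=\pi_i(g)$, this says $\Pi(g)$ is scalar if and only if $\pi_i(g)$ is scalar for every $i$, giving
\[
\ker(\tilde{\Pi})=\{g\in G:\tilde{\pi_i}(g)=1\text{ for all }i\}=\bigcap_{i=1}^{n}\ker(\tilde{\pi_i}),
\]
and hence ${\rm Image}(\tilde{\Pi})\cong G\big/\bigcap_{i}\ker(\tilde{\pi_i})\cong\{(\tilde{\pi_1}(g),\ldots,\tilde{\pi_n}(g)):g\in G\}$, the diagonal image of $G$ in $H_1\times\cdots\times H_n$.

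The main obstacle is the tensor-uniqueness step. For $n=2$, I would fix any nonzero $v$ in the first factor and observe that $(A_1\otimes A_2)(v\otimes w)=(A_1 v)\otimes(A_2 w)$ must equal $c\,v\otimes w$ for every $w$ in the second factor; this forces $A_2 w$ to be a scalar multiple of $w$ independent of $w$, so $A_2=\beta I$, and a symmetric argument gives $A_1=\alpha I$ with $\alpha\beta=c$. The general case follows by induction on $n$, regarding $A_1\otimes\cdots\otimes A_{n-1}$ as a single invertible matrix and applying the $n=2$ case to the pair $(A_1\otimes\cdots\otimes A_{n-1},\,A_n)$. Once this is in place, combining the two identifications with $\{(\tilde{\pi_1}(g),\ldots,\tilde{\pi_n}(g)):g\in G\}$ yields the proposition immediately.
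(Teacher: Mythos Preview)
Your proof is correct and follows essentially the same approach as the paper: both arguments rest on the same key fact---that a tensor product $A_1\otimes\cdots\otimes A_n$ of invertible matrices over a field is scalar if and only if each $A_i$ is---and both use it (together with the preceding Lemma for the direct-sum side) to identify ${\rm Image}(\tilde{\Pi})$ with the diagonal image $\{(\tilde{\pi}_1(g),\ldots,\tilde{\pi}_n(g)):g\in G\}$. The only cosmetic difference is that the paper packages the tensor step as the injectivity of a homomorphism $\tilde{\tau}\colon \PGL(V_1)\times\PGL(V_2)\to\PGL(V_1\otimes V_2)$ and then inducts, whereas you phrase it as a kernel computation $\ker(\tilde{\Pi})=\bigcap_i\ker(\tilde{\pi}_i)$; the mathematical content is identical.
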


\begin{proof}

Let $V_1$ and $V_2$ be finite dimensional vector spaces over a general field $\F$ with dimensions $m_1$ and $m_2$ respectively.
 Given $T_i\in \GL(V_i)$ for each $i =1,2$, we consider the natural $\F$-linear map $(T_1\otimes T_2) : V_1\otimes V_2 \rightarrow V_1\otimes V_2$ that is also invertible. Fix bases of $V_1$ and $V_2$ and the corresponding basis of $V_1\otimes V_2$. Let $A_1, A_2, A$ be the matrices representing 
$T_1, T_2, T_1\otimes T_2$, respectively with respect to these bases. Then 
\[ A = A _1 \otimes A_2 =
 \begin{bmatrix}
a_{11}A_2 &\dots & a_{1m_{1}}A_2 \\
\vdots & \ddots &\vdots \\
a_{m_{1}1}A_2 & \dots & a_{m_{1}m_{1}}A_2
\end{bmatrix}_{m_1 m_2}, \]  
where $A_1= \begin{bmatrix}
a_{11} &\dots & a_{1m_1} \\
\vdots &\ddots &\vdots \\
a_{m_1 1} & \dots & a_{m_1 m_1}
\end{bmatrix}$ and $A_2= \begin{bmatrix}
a'_{11} &\dots & a'_{1m_2} \\
\vdots &\ddots &\vdots \\
a'_{m_2 1} & \dots & a'_{m_2 m_2}
\end{bmatrix}$.
\medskip
 
 If  the map $T_1 \otimes T_2$ is given by the scalar multiplication by $\lambda \in \F^\times$, then it follows that $A_i = \mu_i I$ for 
 some scalars $\mu_1, \mu_2 \in \F^\times$ such that $\lambda = \mu_1 \mu_2$. Thus the map $T_1 \otimes T_2$ descends to an injective group homomorphism
 \[  \tilde{\tau} : \PGL(V_1)\times \PGL(V_2)\rightarrow \PGL(V_1\otimes V_2).\smallskip
 \]

We take the special cases $\F = E_{\Lambda}$ and then $\F=\mathbb{F}_{\Lambda}$ to complete the proof. \smallskip
 
 Let $\F = E_{\Lambda}$. Then $\Pi(g) = \pi_1(g)\otimes  \pi_2(g)\otimes  \dots \otimes  \pi_n(g) $. Hence 
 $\Pi(G)\subset \GL_M (\mathcal{O}_{\Lambda})$ where $M=m_1 m_2\cdots m_n$ and $\tilde{\Pi} : G\rightarrow \PGL_M(\mathbb{F}_{\Lambda})$. \smallskip

Let $\F=\mathbb{F}_{\Lambda}$.
We observe that 
$\tilde{\tau}((\tilde{\pi}_1(g),\tilde{\pi}_2(g)))= \tilde{\pi_1 (g) \otimes \pi_2(g)} = (\tilde{\pi_1 \otimes \pi_2})(g)$. Hence, $\tilde{\tau}(\{(\tilde{\pi}_1(g),\tilde{\pi}_2(g))\ |\ g\in G\})={\rm Image} (\tilde{\pi_1 \otimes \pi_2})$. Since $\tilde{\tau}$ is injective, we get $\{(\tilde{\pi}_1(g),\tilde{\pi}_2(g))\ |\ g\in G\}\cong {\rm Image}(\tilde{(\pi_1 \otimes \pi_2)} )$. \smallskip

By induction on $n$, we get a well defined injective homomorphism from $ \PGL(V_1)\times \PGL(V_2)\times \dots \times \PGL(V_n)$ to $\PGL(\bigotimes \limits_{1 \leq i \leq n} V_i)$ such that ${\rm Image}(\tilde{\Pi} )\cong \{(\tilde{\pi}_1(g),...,\tilde{\pi}_n(g))\ |\ g\in G\}$.\end{proof}

\medskip

Let $L=\{(\pi(g),\rho(g))|g\in G \}$. Thus  $\tilde{L}=\{(\tilde{\pi}(g),\tilde{\rho}(g))|g\in G \}$ is realisable as a Galois group over $\mathbb{Q}$ through the Galois representation $(\pi\oplus \rho, W\oplus V)$ where $\rho$ is induced representation as before.

\begin{remark}
        All the following results for the pair ($\rho$, $\pi$) also hold for pairs ($\rho'$, $\pi'$) as well as ($\tilde{\rho}$, $\tilde{\pi}$).
    \end{remark}

\begin{lemma}\label{projection kernels}

For $H\unlhd G$ with $[G:H]=n$, we define the projection map $\Psi :L\rightarrow \rho(G)$ by $\Psi((\pi(g),\rho(g)))=\rho(g)$ and the projection map $\Phi :L\rightarrow \pi(G)$ by $\Phi((\pi(g),\rho(g)))=\pi(g)$. Then\smallskip

\begin{enumerate}
    \item The map $\Psi$ is an isomorphism.
 
\item We have $ker(\Phi)\cong ker(\pi)/(ker(\pi)\cap H)$ and it can be seen as a subgroup of $G/H$. 

\end{enumerate}
\end{lemma}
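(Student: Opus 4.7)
My plan is to treat the two parts in sequence, using the block-matrix description of $\rho(g)$ derived earlier in the subsection as the key technical input. For part (1), surjectivity of $\Psi$ is immediate from the definition of $L$ and the fact that $\Psi$ is a homomorphism between subgroups of $\pi(G) \times \rho(G)$. For injectivity I would argue $\ker(\rho) \subset \ker(\pi)$ as follows: if $\rho(g) = I$ and we write $g = h s_i$ with $h \in H$, then the formula for the $(p,q)$-block of $\rho(g)$ (namely $\pi(s_p h s_i s_q^{-1})$ when $s_p s_i \in s_q H$, and $0$ otherwise) forces $g \in H$, because the $(1,1)$-block is non-zero if and only if $s_i \in H$, i.e., $i = 1$. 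Once $g \in H$, the earlier lemma identifying $\rho(H)$ with $\pi(H)$ via the first-block projection immediately yields $\pi(g) = I$. Hence $\ker(\Psi) = \{(\pi(g), \rho(g)) : \rho(g) = I\}$ is trivial.

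For part (2), I would first rewrite
\[ \ker(\Phi) = \{(I, \rho(g)) : g \in \ker(\pi)\} \cong \rho(\ker(\pi)), \]
where the isomorphism uses the already-established injectivity of $\Psi$. Applying the first isomorphism theorem to $\rho$ restricted to $\ker(\pi)$, I get
\[ \rho(\ker(\pi)) \cong \ker(\pi) / (\ker(\pi) \cap \ker(\rho)). \]
The key computation is the identification $\ker(\rho) = \ker(\pi) \cap H$. One inclusion follows from the block-matrix argument of part (1): $g \in \ker(\rho)$ forces $g \in H$ and then $\pi(g) = I$. For the reverse inclusion, if $g \in \ker(\pi) \cap H$, then for every coset representative $s_i$ the conjugate $s_i g s_i^{-1}$ lies in $\ker(\pi)$ (since $\ker(\pi)$ is normal in $G$), so every diagonal block $\pi(s_i g s_i^{-1})$ of $\rho(g)$ is $I$, giving $\rho(g) = I$. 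Substituting back gives $\ker(\Phi) \cong \ker(\pi)/(\ker(\pi) \cap H)$.

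Finally, for the last sentence, I would exhibit the embedding into $G/H$ by composing the inclusion $\ker(\pi) \hookrightarrow G$ with the quotient $G \twoheadrightarrow G/H$. The kernel of this composite is exactly $\ker(\pi) \cap H$, so by the first isomorphism theorem its image, which is a subgroup of $G/H$, is isomorphic to $\ker(\pi)/(\ker(\pi) \cap H)$.

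\textbf{Main obstacle.} The only non-routine step is the injectivity argument in part (1) (and, equivalently, the computation of $\ker(\rho)$ in part (2)), which requires carefully reading the block-matrix formula for $\rho(hs_i)$ to rule out $s_i \notin H$. Once this is in hand, the rest is a straightforward chain of first-isomorphism-theorem applications together with normality of $\ker(\pi)$ in $G$.
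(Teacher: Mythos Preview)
Your proof is correct. Part~(1) matches the paper's argument exactly.

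For part~(2) you take a genuinely different route from the paper. The paper works explicitly with the index set $J=\{i : \ker(\pi)\cap Hs_i \neq \emptyset\}$, shows that for each $i\in J$ the element $\theta_i:=\rho(h_is_i)$ (with $h_is_i\in\ker(\pi)$) depends only on $i$, and builds the embedding $\Omega:\ker(\Phi)\hookrightarrow G/H$, $(I,\theta_i)\mapsto s_iH$ by hand; it then separately identifies $\ker(\pi)/(\ker(\pi)\cap H)$ with $\{s_iH\}_{i\in J}$ via the coset decomposition $\ker(\pi)=\bigsqcup_{i\in J}(\ker(\pi)\cap H)h_is_i$. Your argument instead isolates the single identity $\ker(\rho)=\ker(\pi)\cap H$ (proved via normality of $\ker(\pi)$ for the non-obvious inclusion) and then reads off everything from two applications of the first isomorphism theorem. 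This is cleaner and more conceptual. The trade-off is that the paper's explicit description of $\Omega$ and of the image $\{s_iH\}_{i\in J}\subset G/H$ is reused verbatim in the subsequent Lemma~\ref{pi G ker phi} and Theorem~\ref{thm : L}; your version yields the same subgroup of $G/H$, but if you intend to follow the paper further you should record that your embedding sends $(I,\rho(g))$ to $gH$, so its image is precisely $\{s_iH : \ker(\pi)\cap Hs_i\neq\emptyset\}$.
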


\begin{proof}

(1) 
    If $\rho(g)=I$, then $g\in H$. By matrix of $\rho(g)$, we get $\pi(g)=I$ since $g\in H$. Hence $\Psi$ is also injective.\smallskip

(2) Suppose $\pi(h_is_i)=\pi(h_js_j)=I_m$ for some $h_i,h_j\in H$. Then in the matrix of $\rho(h_is_i)$, the $p,q$-th $m\times m$ block matrix, where $1\leq p,q \leq n$ and $q$ is such that $s_p s_i\in s_q H$, is $ \pi(s_p s_q^{-1})$. Hence it is independent of $h_i\in H$. Let $\rho(h_is_i)=\theta_i$. Similarly, $\rho(h_js_j)=\theta_j$.  \smallskip

Let $s_is_jH=s_kH$ for some $k$. Hence for some $h_k\in H$, $\pi(h_is_i)\pi(h_js_j)=\pi(h_ks_k)$ and $\rho(h_is_i)\rho(h_js_j)=\rho(h_ks_k)$. Hence $\pi(h_ks_k)=I_m$ for above $h_k$. Let $\rho(h_ks_k)=\theta_k$. Then $\theta_i\theta_j=\theta_k$. Hence $\{\theta_i\}_{1\leq i\leq n}$ form a group with same group law as $\{(s_i H)\}_{1\leq i\leq n}$. Also, for any $i$, $\rho(h_is_i)=\theta_i$ if and only if $\pi(h_is_i)=I_m$. Let $J=\{i\in \{1,2,...,n\}\ |\  ker(\pi)\cap  H s_i \neq \emptyset \}$. Hence $ker(\Phi) =\{(I,\theta_i)\}_{i\in J}$. Thus we have $\Omega : ker(\Phi) \xhookrightarrow{} G/H$ sending $(I,\theta_i)$ to $s_i H$. \smallskip

 We have the usual maps $ker(\pi)\xhookrightarrow{} G\rightarrow G/H$. Hence $ker(\pi)/(ker(\pi)\cap H)\xhookrightarrow{} G/H$. Also, $ker(\pi)= \bigsqcup_{i\in J} (ker(\pi)\cap H s_i)$. Now for $i\in J$ there exists an $h_i\in H$ such that $\pi(h_is_i)=I_m$. Hence for $i\in J$, $(ker(\pi)\cap H s_i)=(ker(\pi)\cap H) (h_is_i) $. Hence $ker(\pi)= \bigsqcup_{i\in J} (ker(\pi)\cap H) h_is_i$. Thus $$ker(\pi)/(ker(\pi)\cap H) \cong \{(s_i H)\}_{i\in J} \cong ker(\Phi).$$ \vspace{-1.2 cm}
 
 \end{proof}
 
\smallskip

\begin{example}
Let $G/H$ be cyclic with representatives of $H$-cosets in $G$ of the form $\{1,s,s^2,\dots , s^{n-1}\}$. 
    Suppose $\pi(hs^i)=I_m$. Then, 
    
$$ \rho(hs^i) = \begin{bmatrix}
0 &\dots & & 0 &\pi(s^{-i}) & 0 & \dots &  \\
\vdots &  & & & 0 & \pi(s^{-i}) & 0 & \dots\\
 & \ddots &  & & \vdots & & \ddots  & \\ 
 & & & & & & & \pi(s^{-i})\\
\pi(s^{n-i}) & & & & & & & 0 \\
0 & \pi(s^{n-i}) & & & & & & \vdots \\
\vdots & 0 & \ddots & & & & &  \\
 & & 0 & \pi(s^{n-i}) & & & &  \\
\end{bmatrix}=\theta^i.$$
\end{example}
\medskip

\begin{corollary}
\label{iff}
    $\tilde L\cong \tilde\pi(G)\iff ker(\tilde\pi)\subset H$. 
\end{corollary}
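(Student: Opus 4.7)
The plan is to apply the tilde-version of Lemma~\ref{projection kernels} (justified by the Remark immediately preceding it) to the pair $(\tilde{\rho}, \tilde{\pi})$. This yields a surjective group homomorphism
\[
\tilde{\Phi} : \tilde{L} \to \tilde{\pi}(G), \qquad (\tilde{\pi}(g), \tilde{\rho}(g)) \mapsto \tilde{\pi}(g),
\]
together with the identification
\[
ker(\tilde{\Phi}) \;\cong\; ker(\tilde{\pi}) / (ker(\tilde{\pi}) \cap H),
\]
viewed as a subgroup of $G/H$ via the injection $\Omega$ from the proof of Lemma~\ref{projection kernels}. So by this identification alone, $ker(\tilde{\Phi})$ is trivial if and only if $ker(\tilde{\pi}) \cap H = ker(\tilde{\pi})$, i.e.\ if and only if $ker(\tilde{\pi}) \subset H$.

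Next I would upgrade the statement \emph{``$\tilde{\Phi}$ is an isomorphism''} to the statement \emph{``$\tilde{L}$ and $\tilde{\pi}(G)$ are isomorphic as abstract groups''}. One direction is trivial. For the other, both groups are finite: $\tilde{\pi}(G)$ sits inside $\PGL_m(\mathbb{F}_{\Lambda})$, and $\tilde{L}$ is isomorphic to $\tilde{\rho}(G) \subset \PGL_{nm}(\mathbb{F}_{\Lambda})$ via the map $\tilde{\Psi}$ of Lemma~\ref{projection kernels}, which part~(1) of that lemma shows to be an isomorphism in the tilde case as well. Hence an abstract isomorphism forces $|\tilde{L}| = |\tilde{\pi}(G)|$, and combined with the already-established surjectivity of $\tilde{\Phi}$, this forces $\tilde{\Phi}$ to be injective.

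Chaining the two steps gives $\tilde{L} \cong \tilde{\pi}(G)$ iff $\tilde{\Phi}$ is an isomorphism iff $ker(\tilde{\Phi})$ is trivial iff $ker(\tilde{\pi}) \subset H$, which is the desired equivalence. The only subtlety is the second paragraph: a priori one must rule out an ``exotic'' isomorphism between $\tilde{L}$ and $\tilde{\pi}(G)$ that does not coincide with $\tilde{\Phi}$, but this is handled cleanly by the finiteness argument above. All other steps are direct applications of Lemma~\ref{projection kernels}.
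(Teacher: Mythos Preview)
Your proof is correct and follows essentially the same approach as the paper's: both use the surjection $\tilde{\Phi}:\tilde{L}\to\tilde{\pi}(G)$, invoke the identification $ker(\tilde{\Phi})\cong ker(\tilde{\pi})/(ker(\tilde{\pi})\cap H)$ from Lemma~\ref{projection kernels}, and use finiteness of $\tilde{L}$ to conclude that an abstract isomorphism $\tilde{L}\cong\tilde{\pi}(G)$ forces $ker(\tilde{\Phi})$ to be trivial. Your version is slightly more explicit in justifying the finiteness of $\tilde{L}$ via part~(1) of Lemma~\ref{projection kernels}, whereas the paper simply asserts it.
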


\begin{proof}
    Now $\tilde L/ker(\tilde \Phi)\cong \tilde\pi(G)$ where $\tilde{\Phi} :\tilde{L}\rightarrow \tilde{\pi}(G)$ is the projection map. Since $\tilde{L}$ is finite group, we have $\tilde L\cong \tilde\pi(G)\iff ker(\tilde\Phi)$ is trivial $\iff ker(\tilde\pi)=(ker(\tilde\pi)\cap H) \iff ker(\tilde\pi)\subset H$.\end{proof}

\smallskip

By Lemma \ref{projection kernels}, $ker(\Phi)$ can be realized as
a subgroup of $G/H$ and we denote this embedding by $\Omega : ker(\Phi) \xhookrightarrow{} G/H$. Let $G'$ be the unique subgroup of $G$ such that $H \subset G' \subset G$ and 
$ker(\Phi)= G'/H $.
\smallskip

In fact, we have a more precise statement.

\begin{lemma}
\label{pi G ker phi}
Let $H\subset G' \subset G$. Then\smallskip

 $ker(\Phi)= G'/H $ under $\Omega$ $\iff$
$G'$ is the largest subgroup of $G$ such that
    $\pi(G')=\pi(H)$.\smallskip
 
     In particular, $ker(\Phi)= G/H $ under $\Omega$ $\iff$  $\pi(G)=\pi(H)$. 
\end{lemma}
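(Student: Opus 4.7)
The plan is to use Lemma~\ref{projection kernels}(2) to pin down $\Omega(ker(\Phi))$ as an explicit subgroup of $G/H$, and then to recognise that subgroup as the quotient of the largest subgroup of $G$ containing $H$ whose image under $\pi$ equals $\pi(H)$.

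First, from the proof of Lemma~\ref{projection kernels}(2), one reads off
\[ \Omega(ker(\Phi)) = \{\, s_iH : i \in J\,\}, \qquad J = \{\, i : ker(\pi) \cap Hs_i \neq \emptyset\,\}. \]
The condition $ker(\pi) \cap Hs_i \neq \emptyset$ is equivalent to $s_i \in ker(\pi) \cdot H$, and hence to $s_iH \subseteq ker(\pi) \cdot H$. Since $ker(\pi)$ is normal in $G$, the set $ker(\pi) \cdot H$ is a subgroup of $G$ containing $H$, and so $\Omega(ker(\Phi)) = (ker(\pi) \cdot H)/H$ as subgroups of $G/H$.

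Next I would identify $ker(\pi) \cdot H$ as the largest subgroup of $G$ containing $H$ whose $\pi$-image equals $\pi(H)$. One direction is immediate: $\pi(ker(\pi) \cdot H) = \pi(H)$. Conversely, if $G' \supseteq H$ is any subgroup with $\pi(G') = \pi(H)$, then for each $g' \in G'$ there exists $h \in H$ with $\pi(g') = \pi(h)$, forcing $g'h^{-1} \in ker(\pi)$ and therefore $g' \in ker(\pi) \cdot H$. Thus $G' \subseteq ker(\pi) \cdot H$, with equality precisely when $G'$ is the largest such subgroup; equivalently, $ker(\pi) \cdot H$ is nothing but the preimage $\pi^{-1}(\pi(H))$.

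Combining the two steps yields $ker(\Phi) = G'/H$ under $\Omega$ if and only if $G' = ker(\pi) \cdot H$, which by the previous paragraph is the same as saying that $G'$ is the largest subgroup of $G$ (necessarily containing $H$) with $\pi(G') = \pi(H)$. Specialising to $G' = G$ gives the ``in particular'' assertion: $G = ker(\pi) \cdot H$ if and only if $\pi(G) = \pi(H)$. There is no real obstacle here beyond carefully translating the coset-wise description of $ker(\Phi)$ furnished by Lemma~\ref{projection kernels} into the group-theoretic description $ker(\pi) \cdot H = \pi^{-1}(\pi(H))$; once that identification is made, both equivalences are formal.
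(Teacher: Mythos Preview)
Your proof is correct and follows essentially the same approach as the paper: both start from the description $\Omega(ker(\Phi))=\{s_iH : i\in J\}$ supplied by Lemma~\ref{projection kernels}(2) and translate the condition $ker(\pi)\cap Hs_i\neq\emptyset$ into $\pi(s_i)\in\pi(H)$, then characterize the resulting subgroup as the largest $G'$ with $\pi(G')=\pi(H)$. Your packaging via the explicit subgroup $ker(\pi)\cdot H=\pi^{-1}(\pi(H))$ is a slightly cleaner rephrasing of the paper's chain of index-set equivalences $I=J$, but the underlying argument is identical.
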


\begin{proof}

Now $H\subset G' \subset G$. We have $G'= \bigsqcup_{i\in I} (G'\cap H s_i)$ where $I=\{i\in \{1,2,...,n\}\ |\  G'\cap Hs_i\neq \emptyset \} $. Also $(G'\cap Hs_i)\neq \emptyset\iff s_i\in G'$.
Hence $I=\{i\in \{1,2,...,n\}\ |\  s_i\in G'\}$. For $i\in I$, $(G'\cap H s_i)= H s_i$. Hence $G'=\bigsqcup_{i\in I} Hs_i$  and $G'/H =\{s_i H\}_{i\in I}$. Now the following  argument completes the proof. 
\[
\begin{split}
& ker(\Phi)= G'/H \ \text{under}\ \Omega \\ 
\iff & \{s_i H\}_{i\in J}=\{s_i H\}_{i\in I}, \text{that is}\  I=J \\
\iff & s_i\in G' \ \text{iff} \ ker(\pi)\cap  H s_i \neq \emptyset\\
\iff & s_i\in G'\ \text{ iff}\ \pi(s_i)\in \pi(H) \\
\iff & g\in G'\ \text{ iff }\ \pi(g)\in \pi(H)\\
\iff & G'\ \text{ is the largest subgroup of}\  G\ \text{such that}
    \  \pi(G')=\pi(H).
    \end{split}
    \]\vspace{-.9 cm}
    
\end{proof}

\smallskip

\begin{theorem}
\label{thm : L}    

   Suppose $\pi(G)=\pi(H)$. Then the exact sequence \begin{equation}\label{last exact}
       1\rightarrow ker(\Phi)  \xhookrightarrow{} L \rightarrow \pi(G)\rightarrow 1 \end{equation} is right split and $L\cong  \pi(G) \ltimes G/H $ for some semidirect product group law. 
\end{theorem}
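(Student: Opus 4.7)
The plan is to construct an explicit set-theoretic section of $\Phi:L\to\pi(G)$ that turns out to be a group homomorphism, using the subgroup of $L$ coming from $H$. Recall that $L=\{(\pi(g),\rho(g)) \mid g\in G\}$ and that the hypothesis $\pi(G)=\pi(H)$ places $\ker(\Phi)\cong G/H$ by Lemma~\ref{pi G ker phi}, so the proposed splitting will produce the desired semidirect product decomposition $L\cong \pi(G)\ltimes G/H$.

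The key step is to introduce $L_H := \{(\pi(h),\rho(h)) \mid h\in H\}$, which is a subgroup of $L$ since it is the image of $H$ under the homomorphism $g\mapsto(\pi(g),\rho(g))$. I would then show that $\Phi|_{L_H} : L_H \to \pi(G)$ is a group isomorphism. Surjectivity is immediate from the hypothesis $\pi(H)=\pi(G)$. For injectivity, suppose $(\pi(h),\rho(h))\in\ker(\Phi|_{L_H})$ with $h\in H$, so $h\in \ker(\pi)\cap H$. Here is where normality enters: since $H$ is normal in $G$ and $\ker(\pi)$ is normal in $G$, the intersection $N:=\ker(\pi)\cap H$ is also normal in $G$. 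Hence $s_i h s_i^{-1}\in N\subset \ker(\pi)$ for every coset representative $s_i$. Looking at the explicit block-diagonal form of $\rho$ on $H$ recorded earlier, this gives
\[
\rho(h)=\mathrm{diag}\bigl(\pi(h),\pi(s_2 h s_2^{-1}),\dots,\pi(s_n h s_n^{-1})\bigr)=I,
\]
so $(\pi(h),\rho(h))=(I,I)$, proving injectivity.

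With $\Phi|_{L_H}$ an isomorphism, I would define $\sigma:\pi(G)\to L$ as the composition of $(\Phi|_{L_H})^{-1}$ with the inclusion $L_H\hookrightarrow L$. By construction $\Phi\circ\sigma = \mathrm{id}_{\pi(G)}$, so $\sigma$ is a right splitting of the exact sequence \eqref{last exact}. Consequently $L\cong \ker(\Phi)\rtimes \pi(G)$ for the induced semidirect product structure, and combining with the identification $\ker(\Phi)\cong G/H$ from Lemma~\ref{pi G ker phi} yields $L\cong \pi(G)\ltimes G/H$, as claimed.

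The main obstacle — and really the only nontrivial point — is the injectivity argument for $\Phi|_{L_H}$: one must exploit both normalities simultaneously to conclude that the block-diagonal matrix $\rho(h)$ collapses to the identity when $h\in\ker(\pi)\cap H$. Everything else is formal bookkeeping using the explicit description of the induced representation $\rho$ and the lemmas already established.
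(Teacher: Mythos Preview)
Your proof is correct and follows essentially the same route as the paper: the paper defines the splitting $\iota:\pi(G)\to L$ by $\iota(\pi(h))=(\pi(h),\rho(h))$ for $h\in H$ and checks well-definedness via the block-diagonal form of $\rho(h)$, which is exactly your argument that $\Phi|_{L_H}$ is injective (and then you take $\sigma=(\Phi|_{L_H})^{-1}$). Your phrasing in terms of $L_H$ and the explicit invocation of the normality of $\ker(\pi)$ to see that each block $\pi(s_ihs_i^{-1})$ vanishes is slightly more detailed than the paper's ``by matrix calculation above,'' but the substance is identical.
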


\begin{proof}

Since $\pi(G)=\pi(H)$, $ker(\Phi)= G/H$ under $\Omega$. Consider $\iota :\pi(G) \rightarrow L$ given by $\iota(x)=(\pi(h), \rho (h))$ where $x=\pi(h)$ for some $h\in H$. If $x=\pi(h)=\pi(h')$ for some $h,h'\in H$ then $\pi(hh'^{-1})=I$. Hence by matrix calculation above, $\rho(hh'^{-1})=I$. Hence $\rho(h)=\rho(h')$. Thus $\iota$ is well defined. Clearly, $\iota$ is a homomorphism and $\Phi \circ \iota =id_{\tilde{\pi}(G)}$, therefore the exact sequence splits.\end{proof}

\begin{corollary}
Let $G/H$ be cyclic $<s H>$. If $ker(\pi)\cap Hs \neq \emptyset$ then the exact sequence \ref{last exact} is right split.    
\end{corollary}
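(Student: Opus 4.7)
The plan is to reduce the corollary directly to Theorem \ref{thm : L} by verifying that its hypothesis $\pi(G) = \pi(H)$ is automatically satisfied in this cyclic setting.

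First, I would unpack the assumption $\ker(\pi) \cap Hs \neq \emptyset$: it furnishes an element $h \in H$ with $hs \in \ker(\pi)$, equivalently $\pi(s) = \pi(h)^{-1} = \pi(h^{-1}) \in \pi(H)$. Then, using that $G/H = \langle sH \rangle$ is cyclic of order $n$, every $g \in G$ admits an expression $g = h' s^i$ with $h' \in H$ and $0 \leq i \leq n-1$. Since $\pi$ is a homomorphism, $\pi(s)^i = \pi(s^i) = \pi(h^{-i}) \in \pi(H)$, so $\pi(g) = \pi(h')\pi(s)^i \in \pi(H)$. Combined with the trivial inclusion $\pi(H) \subseteq \pi(G)$, this yields $\pi(G) = \pi(H)$.

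With this hypothesis verified, Theorem \ref{thm : L} applies and the exact sequence (\ref{last exact}) is right split, as required. I do not foresee a real obstacle here: the corollary is essentially a convenient reformulation of Theorem \ref{thm : L} in the cyclic-quotient case, where a single coset condition on the generator $s$ propagates along powers of $s$ to force the desired equality $\pi(G) = \pi(H)$. The only point to be careful about is that one should phrase the argument so it is clear that the representatives $\{1, s, s^2, \ldots, s^{n-1}\}$ of $H$-cosets make the reduction $g = h's^i$ canonical; after that the computation is one line.
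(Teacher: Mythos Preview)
Your proposal is correct and follows essentially the same route as the paper: both arguments use the hypothesis to show $\pi(G)=\pi(H)$ and then invoke Theorem~\ref{thm : L}. The paper phrases the intermediate step as showing $\ker(\pi)\cap Hs^i\neq\emptyset$ for all $i$ (hence $\ker(\Phi)\cong G/H$, equivalent to $\pi(G)=\pi(H)$ by Lemma~\ref{pi G ker phi}), whereas you argue directly that $\pi(s)\in\pi(H)$ forces $\pi(G)=\pi(H)$; these are the same computation viewed from two sides.
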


\begin{proof}
    Since $ker(\pi)\cap Hs \neq \emptyset$, there is an $h_1\in H$ such that $\pi(h_1 s)=1$. Since $H$ is normal in $G$, $(h_1s)^i=h_is^i$ for some $h_i\in H$. Hence $\pi(h_i s^i)=\pi(h_1 s)^i=1$. Hence for all $i$, $ker(\pi)\cap Hs^i \neq \emptyset$. Thus $ker(\Phi)\cong G/H$.\end{proof}

\begin{corollary}
    Let $\{\rho(s_i)\}_i$ form a multiplicatively closed subset of $\rho(G)$ and let $(n,|\pi(G)|)=1$. Then the exact sequence \ref{last exact} is right split.    
\end{corollary}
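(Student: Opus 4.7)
The plan is to reduce the statement to Theorem~\ref{thm : L} by showing that the two hypotheses together force $\pi(G)=\pi(H)$; once that equality is in hand, the conclusion follows from that theorem with no further work.

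First I would invoke the lemma preceding Theorem~\ref{thm:induced}. Because $\{\rho(s_i)\}_i$ is multiplicatively closed, for every pair $i,j$ there exists a $k$ with $\rho(s_i)\rho(s_j)=\rho(s_k)$, and that identity forces simultaneously $\pi(s_i)\pi(s_j)=\pi(s_k)$ and $s_is_j\in s_kH$. Arranging $s_1\in H$ so that $\rho(s_1)=\pi(s_1)=I$, these two consequences together say exactly that the assignment
\[ \alpha : G/H \longrightarrow \pi(G), \qquad s_iH \longmapsto \pi(s_i),\]
is a well-defined group homomorphism, whose image is the subgroup $\{\pi(s_i)\}_{1\le i\le n}\leq\pi(G)$.

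Next I would push in the coprimality hypothesis. The order $|\alpha(G/H)|$ divides both $n=|G/H|$ and $|\pi(G)|$, and the assumption $(n,|\pi(G)|)=1$ then forces $|\alpha(G/H)|=1$. Hence $\pi(s_i)=I$ for every $i$. Now any $g\in G$ can be written as $g=hs_i$ with $h\in H$, so $\pi(g)=\pi(h)\pi(s_i)=\pi(h)\in\pi(H)$; this yields $\pi(G)=\pi(H)$.

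Finally, Theorem~\ref{thm : L} applied to the equality $\pi(G)=\pi(H)$ produces the desired right splitting of the sequence~(\ref{last exact}). The crux of the argument is the first step — upgrading multiplicative closedness of $\{\rho(s_i)\}_i$ into an honest group homomorphism $G/H\to\pi(G)$ via the coset lemma; once that is set up, the coprimality hypothesis makes the image collapse in a single line, and Theorem~\ref{thm : L} closes the proof immediately.
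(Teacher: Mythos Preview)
Your proof is correct and follows essentially the same approach as the paper: both arguments show that the hypotheses force $\pi(s_i)=I$ for all $i$, hence $\pi(G)=\pi(H)$, and then invoke Theorem~\ref{thm : L}. The only cosmetic difference is that the paper argues elementwise --- from $\{\rho(s_i)\}\cong G/H$ it deduces $\rho(s_i)^n=I$, hence $\pi(s_i)^n=I$, and combines this with $\pi(s_i)^{|\pi(G)|}=I$ and the coprimality to get $\pi(s_i)=I$ --- whereas you package the same computation as a homomorphism $\alpha:G/H\to\pi(G)$ whose image order divides $\gcd(n,|\pi(G)|)=1$; the content is identical.
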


\begin{proof}
   Since $|G/H|=n$, for any $i$, $(s_iH)^n =H$. Thus $\rho(s_i)^n = I$. Hence $\pi(s_i)^n =I$. Since $\pi(s_i)\in \pi(G)$, we have $\pi(s_i)^{|\pi(G)|} =I$. Now $(n,|\pi(G)|)=1$. Hence, $\pi(s_i)=1$ for all $i$. Thus $\pi(H)=\pi(G)$.
\end{proof}
\smallskip

Recall the results from \cite{zywina2023modular} that we discussed earlier in Sec.~\ref{newforms}. In this case, we have \smallskip

 \begin{corollary}
     For $p\geq 5$, $\PSL_2(\mathbb{F}_p)\times \mathbb{Z}/2\mathbb{Z}$ is realisable as Galois Group over $\mathbb{Q}$. 
 \end{corollary}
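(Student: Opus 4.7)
The plan is to invoke Theorem~\ref{thm : L} in its projective form, using the Deligne Galois representation $\pi$ attached to the newform from Sec.~\ref{newforms}. Take $K = \mathbb{Q}(i)$ and $H = {\rm Gal}(\bar{\mathbb{Q}}/K)$, so that $K$ is Galois over $\mathbb{Q}$, $H$ is a normal subgroup of index $2$ in $G = {\rm Gal}(\bar{\mathbb{Q}}/\mathbb{Q})$, and $G/H \cong \mathbb{Z}/2\mathbb{Z}$. Under the hypotheses of Sec.~\ref{newforms} we have $\tilde{\pi}(G) = \PSL_2(\mathbb{F}_p)$, and since $[G:H] = 2 < |\PSL_2(\mathbb{F}_p)|$, Lemma~\ref{lemma} yields $\tilde{\pi}(H) = \PSL_2(\mathbb{F}_p) = \tilde{\pi}(G)$. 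Thus the hypothesis of Theorem~\ref{thm : L} is satisfied for the pair $(\tilde{\rho}, \tilde{\pi})$, which is a legitimate instance by the remark extending the results for $(\rho, \pi)$ to the projective setting.

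Applying Theorem~\ref{thm : L} to $(\tilde{\rho}, \tilde{\pi})$ produces a right split exact sequence
\[
1 \to ker(\tilde{\Phi}) \to \tilde{L} \to \tilde{\pi}(G) \to 1,
\]
where by Lemma~\ref{pi G ker phi} the normal factor satisfies $ker(\tilde{\Phi}) \cong G/H \cong \mathbb{Z}/2\mathbb{Z}$. Consequently, $\tilde{L} \cong \PSL_2(\mathbb{F}_p) \ltimes \mathbb{Z}/2\mathbb{Z}$ for some semidirect product group law, and by the discussion preceding Theorem~\ref{thm : L}, $\tilde{L}$ is realizable as a Galois group over $\mathbb{Q}$ through the Galois representation $(\pi \oplus \rho,\, W \oplus V)$.

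Finally, the semidirect product above must in fact be a direct product: the conjugation action of $\PSL_2(\mathbb{F}_p)$ on the normal subgroup $ker(\tilde{\Phi}) \cong \mathbb{Z}/2\mathbb{Z}$ lands in ${\rm Aut}(\mathbb{Z}/2\mathbb{Z})$, which is trivial. Hence $\tilde{L} \cong \PSL_2(\mathbb{F}_p) \times \mathbb{Z}/2\mathbb{Z}$, and this group is realized as a Galois group over $\mathbb{Q}$. The only conceptual point requiring care is correctly identifying which factor is the normal one in the semidirect product produced by Theorem~\ref{thm : L}: it is $ker(\tilde{\Phi}) \cong G/H = \mathbb{Z}/2\mathbb{Z}$, not $\PSL_2(\mathbb{F}_p)$. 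Once this is in place, triviality of ${\rm Aut}(\mathbb{Z}/2\mathbb{Z})$ does the remaining work.
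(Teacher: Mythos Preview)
Your proof is correct and follows essentially the same approach as the paper: invoke Theorem~\ref{thm : L} in its projective form with $K=\mathbb{Q}(i)$, use $\tilde{\pi}(H)=\tilde{\pi}(G)=\PSL_2(\mathbb{F}_p)$ (established via Lemma~\ref{lemma}, exactly as in Theorem~\ref{thm : zywina case}), and then observe that ${\rm Aut}(\mathbb{Z}/2\mathbb{Z})$ is trivial to collapse the semidirect product to a direct product. Your explicit identification of $ker(\tilde{\Phi})\cong G/H$ as the normal factor is precisely the content of Lemma~\ref{pi G ker phi} applied here, matching the paper's use of ``from above''.
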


 \begin{proof}
Since $\tilde{\pi}(H)=\tilde{\pi}(G)$ as in Theorem \ref{thm : zywina case},
we have from above that $\PSL_2(\mathbb{F}_p)\ltimes \mathbb{Z}/2\mathbb{Z}$ is realisable as Galois Group over $\mathbb{Q}$ (for semidirect product group law as in \ref{thm : L}). Since $Aut(\mathbb{Z}/2\mathbb{Z})$ is trivial, we have that semidirect product is indeed direct.
 \end{proof}
 
 We can generalize Lemma \ref{projection kernels}, Corollary \ref{iff}, Lemma \ref{pi G ker phi} and Theorem \ref{thm : L}. Consider a closed subgroup $H'$ of $G$. We have $H'/(H'\cap H)\xhookrightarrow{} G/ H$.

\begin{theorem}
  Let $N=\{(\pi(h'),\rho(h'))|h'\in H' \}$ and $\tilde{N}=\{(\tilde{\pi}(h'),\tilde{\rho}(h'))|h'\in H' \}$.
    \begin{enumerate}
         \item The surjective projection $N\rightarrow \rho(H')$ is an isomorphism.
 
\item Consider the surjective projection $\xi : N\rightarrow \pi(H')$. We have $\omega : ker(\xi) \xhookrightarrow{} H'/(H'\cap H)$ and  $ker(\xi)\cong (ker(\pi)\cap H')/(ker(\pi)\cap H' \cap H)$.

\item  $\tilde{N}\cong \tilde{\pi}(H')\iff ker(\tilde{\pi})\cap H' \subset H$.

\item Let $(H'\cap H) \subset G' \subset H'$ then\\
 $ker(\xi)= G'/(H'\cap H) $ under $\omega$ $\iff$
$G'$ is largest subgroup of $H'$ such that
    $\pi(G')=\pi(H'\cap H)$.
 
     In particular, $ker(\xi)= H'/(H'\cap H) $ under $\omega$ $\iff$  $\pi(H')=\pi(H'\cap H)$. 

     \item Suppose $\pi(H')=\pi(H'\cap H)$. Then the exact sequence \begin{equation}\label{really the last exact}
     1\rightarrow ker(\xi)  \rightarrow N \rightarrow \pi(H')\rightarrow 1\end{equation} is right split and $N\cong  \pi(H') \ltimes H'/(H'\cap H) $ for some semidirect product group law. 

\item  Let $H\subset H'$. Suppose $\pi(G)=\pi(H)$. Then the exact sequence \ref{really the last exact} is right split and $N\cong  \pi(H') \ltimes H'/H $ for some semidirect product group law. 
     
    \end{enumerate}
\end{theorem}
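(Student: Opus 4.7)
The plan is to transfer, one part at a time, the arguments of Lemma~\ref{projection kernels}, Corollary~\ref{iff}, Lemma~\ref{pi G ker phi} and Theorem~\ref{thm : L} from the pair $(G,H)$ to the pair $(H', H' \cap H)$. The enabling observation is that $H'/(H' \cap H) \hookrightarrow G/H$, so a set of representatives for $(H' \cap H)$-cosets in $H'$ can be chosen inside the set of representatives $\{s_i\}$ for $H$-cosets in $G$ used to define $\rho$. Consequently the matrix description of $\rho$ restricted to $H'$ keeps the exact shape exploited in the earlier proofs: for $h' \in H' \cap H$, $\rho(h')$ is block-diagonal with $(j,j)$-block $\pi(s_j h' s_j^{-1})$; for $h' \in H' \setminus H$, the corresponding blocks are purely off-diagonal.

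For (1), I would repeat the earlier argument verbatim: $\rho(h') = I$ with $h' \in H'$ forces $h' \in H$ by the off-diagonal observation, and then the top-left block gives $\pi(h') = I$, so the projection $N \to \rho(H')$ is injective. For (2), the natural map $\omega : ker(\xi) \to H'/(H' \cap H)$ sending $(I, \rho(h'))$ to $h'(H' \cap H)$ has both well-definedness and injectivity reducing to the statement that for $h \in ker(\pi) \cap H$ one has $\rho(h) = I$, which holds because $ker(\pi)$ is normal in $G$ and so $s_j h s_j^{-1} \in ker(\pi)$ for every $j$. The formula $ker(\xi) \cong (ker(\pi) \cap H')/(ker(\pi) \cap H' \cap H)$ then falls out of the surjective homomorphism $ker(\pi) \cap H' \to ker(\xi)$, $h' \mapsto (I, \rho(h'))$, whose kernel is precisely $ker(\pi) \cap H' \cap H$. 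Part (3) is immediate once (2) is in place: passing to the projective version gives $\tilde N/ker(\tilde \xi) \cong \tilde\pi(H')$, and $ker(\tilde\xi)$ is trivial exactly when $ker(\tilde\pi) \cap H' \subset H$.

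For (4), I plan to identify $\omega(ker(\xi))$ as the collection of cosets $h'(H' \cap H)$ with $\pi(h') \in \pi(H' \cap H)$; the corresponding subgroup of $H'$ is $G' = \pi^{-1}(\pi(H' \cap H)) \cap H'$, which is tautologically the largest subgroup of $H'$ whose $\pi$-image equals $\pi(H' \cap H)$. Part (5) then mimics Theorem~\ref{thm : L}: assuming $\pi(H') = \pi(H' \cap H)$, define $\iota : \pi(H') \to N$ by sending $x = \pi(h)$, for any preimage $h \in H' \cap H$, to $(\pi(h), \rho(h))$; well-definedness invokes once more that $\rho$ is trivial on $ker(\pi) \cap H$, and $\xi \circ \iota = \mathrm{id}$ by construction. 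Finally (6) reduces to (5), since $H \subset H'$ yields $H' \cap H = H$, and $\pi(G) = \pi(H)$ combined with the containments $\pi(H) \subset \pi(H') \subset \pi(G)$ forces $\pi(H') = \pi(H' \cap H)$.

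No single step looks difficult in isolation; the main point requiring care is the bookkeeping of two distinct uses of normality — that of $H$ in $G$, which produces the block-diagonal shape of $\rho$ on $H$, and that of $ker(\pi)$ in $G$, which annihilates those diagonal blocks on $ker(\pi) \cap H$. Provided these are invoked at the right moments, each of the six parts transfers almost word-for-word from its already-proved counterpart for $(G,H)$.
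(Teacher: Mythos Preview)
Your proposal is correct and matches the paper's intent precisely: the paper states this theorem without proof, explicitly introducing it as the common generalization of Lemma~\ref{projection kernels}, Corollary~\ref{iff}, Lemma~\ref{pi G ker phi} and Theorem~\ref{thm : L} obtained by replacing the pair $(G,H)$ with $(H', H'\cap H)$ via the embedding $H'/(H'\cap H)\hookrightarrow G/H$. Your outline carries out exactly this transfer, and your closing remark isolating the two distinct normality inputs (that of $H$ in $G$ for the block shape of $\rho$, and that of $\ker\pi$ in $G$ for the vanishing of $\rho$ on $\ker\pi\cap H$) is the right diagnosis of what makes the adaptation go through; one small wording slip is that in part~(2) the \emph{well-definedness} of $\omega$ actually rests on the implication $\rho(h')=I\Rightarrow h'\in H$ from part~(1), while it is the \emph{injectivity} of $\omega$ that uses $\rho(h)=I$ for $h\in\ker\pi\cap H$ --- but both facts are available and the argument is sound.
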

\medskip

\noindent {\it Acknowledgements:} Both the authors would like to thank Prof. Joachim K{\"o}nig for a prompt reply and useful discussion over email about the results by 
Arias-de-Reyna \& K{\"o}nig in \cite{arias2022locally}. The second author Shubham Jaiswal would like to acknowledge support of IISER Pune Institute Scholarship during this work. \medskip


\begin{thebibliography}{}

\bibitem{arias2022locally}
Arias-de-Reyna, Sara and K{\"o}nig, Joachim.~
\textit{Locally cyclic extensions with Galois group $ \GL_2 (p) $}, International Journal of
Number Theory, 20(03):781–796, 2024.

\bibitem{conrad2023galois}
Conrad, Keith.~
\textit{The Galois correspondence at work},
URL: https://kconrad.math.uconn.edu/blurbs/galoistheory/galoiscorrthms.pdf,
2023.

\bibitem{diamond2005first}
Diamond, Fred and Shurman, Jerry Michael.~
\textit{A first course in modular forms},
Springer publications, 228, 2005.


\bibitem{dieulefait2022seven}
Dieulefait, Luis, Florit, Enric and Vila, N{\'u}ria,
\textit{Seven small simple groups not previously known to be Galois over $\mathbb Q$},
Mathematics, 10(12), 2022.

\bibitem{dieulefait1998galois}
Dieulefait, Luis Victor,
\textit{Galois realizations of families of Projective Linear Groups via cusp forms},
  1998. hal-00147919f.

\bibitem{dieulefait2000projective}
Dieulefait, Luis and Vila, N{\'u}ria,
\textit{Projective Linear Groups as Galois Groups over $\Q$ via Modular Representations},
Journal of Symbolic Computation, 30(6), 2000.
  
  \bibitem{ribet}
  Ribet, Kenneth A.,
  \textit{ On  $l$-adic representations attached to modular forms. II},
Glasgow Math. J. 27 (1985), 185-194.

\bibitem{zywina2023modular}
Zywina, David.~
\textit{Modular forms and some cases of the Inverse Galois Problem}, 
Canadian Mathematical Bulletin, 66(2), 2023, 568--586.

\end{thebibliography}
\end{document}